\definecolor{myblue}{RGB}{0,29,119}
\newtheorem{theorem}{Theorem}[section]
\newtheorem{proposition}[theorem]{Proposition}
\newtheorem{corollary}[theorem]{Corollary}
\newtheorem{lemma}[theorem]{Lemma}
\theoremstyle{definition}
\newtheorem{definition}[theorem]{Definition}
\newtheorem{example}[theorem]{Example}
\newtheorem{remark}[theorem]{Remark}
\newtheorem*{theorem*}{Theorem}
\DeclareMathOperator{\End}{End}
\DeclareMathOperator{\coker}{coker}
\DeclareMathOperator{\im}{Im}
\DeclareMathOperator{\gldim}{gl\,\! dim}
\DeclareMathOperator{\domdim}{dom.\! dim}
\DeclareMathOperator{\Hom}{Hom}
\DeclareMathOperator{\ext}{Ext}
\DeclareMathOperator{\Ext}{Ext}
\DeclareMathOperator{\pdim}{p.\!dim}
\DeclareMathOperator{\rank}{Rank}
\DeclareMathOperator{\add}{add}
\DeclareMathOperator{\soc}{soc}
\DeclareMathOperator{\topp}{top}
\DeclareMathOperator{\modd}{mod-\!}
\DeclareMathOperator{\rad}{rad}
\newcommand{\cC}{{\mathcal C}}
\newcommand{\cF}{{\mathcal F}}
\newcommand{\cG}{{\mathcal G}}
\newcommand{\cM}{{\mathcal M}}
\newcommand{\cN}{{\mathcal N}}
\newcommand{\cP}{{\mathcal P}}
\newcommand{\cQ}{{\mathcal Q}}
\newcommand{\cS}{{\mathcal S}}
\renewcommand{\rank}{\operatorname{rank} }
\providecommand{\AMS}{$\mathcal{A}$\kern-.1667em%
\lower.25em\hbox{$\mathcal{M}$}\kern-.125em$\mathcal{S}$}
\begin{document}

\title[Higher Auslander Algebras arising from Dynkin Quivers]{Higher Auslander Algebras arising from Dynkin Quivers and $n$-Representation Finite Algebras}

\author{Emre SEN}

\address{Iowa City, IA}
\email{\href{emresen641@gmail.com}{emresen641@gmail.com}}

\maketitle

{\let\thefootnote\relax\footnotetext{Keywords: higher cluster-tilting modules; higher Auslander Algebras; Auslander Algebras, Dynkin Quiver, $n$-representation finite, $n$-hereditary, derived category, $n$-cluster tilting subcategory, endomorphism algebras, higher Nakayama algebras
MSC 2020: 16E05, 16E10, 16G20} }




\begin{abstract}
In the derived category of $\modd\mathbb{K}Q$ for a Dynkin quiver $Q$, we construct a full subcategory in a canonical way such that its endomorphism algebra is a higher Auslander algebra of global dimension $3k+2$ for any $k\geq 1$. Furthermore, we extend this construction to higher analogues of representation finite and hereditary algebras. Specifically, if $M$ is an $n$-cluster tilting object in the bounded derived category of an $n$-representation finite and $n$-hereditary algebra, then we construct a full subcategory in a canonical way such that its endomorphism algebra is a higher Auslander algebra of global dimension $(n+2)k+n+1$ for any $k\geq 1$.

As an application, we revisit the higher Auslander correspondence. First, we describe the corresponding module categories that admit higher cluster-tilting objects, and then we discuss their relationship with certain full subcategories of the derived category. Consequently, we obtain a vast family of $n$-representation finite and $n$-hereditary algebras whose $n$-cluster tilting objects are always minimal generator–cogenerators. Moreover, the resulting algebras can be realized as endomorphism algebras of certain full subcategories of (higher) cluster categories.
  
 \end{abstract}

\tableofcontents
\section{Introduction}

O.~Iyama introduced the higher Auslander correspondence in \cite{iyama2007auslandercorrespondence}, showing that a finite-dimensional algebra $A$ over an algebraically closed field $\mathbb{K}$ satisfying
\begin{align}
\gldim A \leq d+1 \leq \domdim A
\end{align}
for some $d \geq 1$ can be realized bijectively as $A := \End_{B}(\cM)$, where $\cM$ is a $d$-cluster-tilting object in the category of finitely generated $B$-modules for some algebra $B$. Since then, the classification of $d$-cluster-tilting modules for a given class of algebras, or the characterization of higher Auslander algebras within a class of algebras, has been a challenging problem, even for well-understood module categories. In the works \cite{sen2020nakayama}, \cite{senDefectlinear}, and \cite{ringel2022linear}, Nakayama algebras that are higher Auslander have been studied. In \cite{vaso2019n} and \cite{darpo2020d}, Nakayama algebras admitting higher cluster-tilting objects were investigated.


In this work, we follow a different approach: instead of considering $d$-cluster-tilting subcategories, we utilize specific full subcategories of the bounded derived category to construct higher Auslander algebras. The main focus of this work is the following category.

\begin{definition}\label{definition dynkin category}
Let $\modd\Lambda$ be the category of finitely generated left modules over the path algebra $\Lambda=\mathbb{K}Q$ of a Dynkin quiver $Q$. Consider the full subcategory of the bounded derived category $D^b(\modd\Lambda)$, denoted by $\cS^{k}$, whose objects are
\begin{align*}
\left\{M \,\middle|\, M = X[j],\, 0 \leq j \leq k,\, \forall X \in \modd\Lambda \right\},
\end{align*}
and whose morphisms are given by
\begin{align*}
\Hom_{\cS^k}(X[i],Y[j]) =
\begin{cases}
\Hom_{\Lambda}(X,Y), & \quad \text{if } i = j,\\
\Ext^1_{\Lambda}(X,Y), & \quad \text{if } i = j - 1,\\
0, & \quad \text{otherwise.}
\end{cases}
\end{align*}
\end{definition}

Consider the object
\begin{align*}
\cP = \bigoplus\limits_{\substack{X \in \text{Ind}\Lambda \\ 0 \leq j \leq k}} X[j].
\end{align*}

\noindent
Here, $\text{Ind}\text{-}\Lambda$ denotes a set of representatives of the isomorphism classes of indecomposable $\Lambda$-modules.
It follows that $\add\cP = \cS^k$.
The first result we present is the following:

\begin{theorem}\label{THM dynkin}
The algebra $\Gamma^k := \End_{\cS^k}(\cP)$ is a higher Auslander algebra of global dimension $3k+2$ for $k \geq 1$. Moreover, the opposite quiver of $\Gamma^k$ coincides with the Auslander--Reiten quiver of $\cS^k$.
\end{theorem}
First, recall that for any algebra of finite representation type, the endomorphism algebra of an additive generator of the module category has global dimension at most two and dominant dimension at least two, by the classical Auslander correspondence. The theorem stated above mimics this construction by taking the additive generator of the full subcategory $\mathcal{S}^k$ of the derived category and thereby producing higher Auslander algebras. Consequently, the quiver of $(\Gamma^k)^{op}$ coincides with the Auslander--Reiten quiver of the category $\mathcal{S}^k$. This implies that the quiver of $\Gamma^k$ is obtained by appropriately gluing $k+1$ copies of the Auslander--Reiten quiver of $\modd\Lambda^{op}$. We prove this in Section~\ref{section dynkin}.

Before stating our next result, we recall that a subcategory $\cC$ of $D^b(\modd\Lambda)$ is called \emph{convex} if, for every $X, Y \in \cC$, whenever there exists a nonzero composition $X \rightarrow Z \rightarrow Y$, then $Z \in \cC$. Algebras $\Gamma^k$ are unique in the following sense.

\begin{theorem}\label{thm uniqueness gamma dynkin}
Let $\cC = \add \cG$ be a convex subcategory of $D^b(\modd\Lambda)$ such that
\[
\cS^0 \varsubsetneq \cC \varsubsetneq \cS^k.
\]
The following statements are equivalent:
\begin{enumerate}[label=\roman*)]
\item $\End_{D^b(\modd\Lambda)}(\cG)$ is a higher Auslander algebra;
\item $\End_{D^b(\modd\Lambda)}(\cG) \cong \Gamma^i$ for some $i$, $1 \le i \le k-1$;
\item $\cC \cong \cS^i$ for some $i$, $1 \le i \le k-1$.
\end{enumerate}
\end{theorem}

Furthermore, in Section~\ref{section uniqueness}, we provide upper and lower bounds for the dominant and global dimensions of $\End_{D^b(\modd\Lambda)}(\cG)$ when it is not a higher Auslander algebra.\\

We observe that the construction in Definition~\ref{definition dynkin category} is not restricted to Dynkin quivers. Recall that a finite dimensional algebra $A$ is called $n$-representation finite if it possesses a unique $n$-cluster tilting object $\mathcal{M}$ such that
\begin{align}
\mathcal{M} := \add\left(\bigoplus_{j \geq 0} \tau_n^j(\text{D}A)\right)
\end{align}
where $\tau_n := \tau\Omega^{n-1}$ denotes the higher Auslander--Reiten translate and $\text{D} = \Hom_{\mathbb{K}}(-,\mathbb{K})$ is the $\mathbb{K}$ dual. Furthermore, $A$ is called an $n$-hereditary algebra if the global dimension of $A$ is $n$. In this case, the subcategory
\begin{align}
\mathcal{M}[n\mathbb{Z}] := \add\left(X[\ell n]\,\vert\, X\in\mathcal{M},\, \ell\in\mathbb{Z}\right)
\end{align}
is an $n$-cluster tilting subcategory of $D^b(\modd A)$. Moreover, it forms an $(n+2)$-angulated category \cite{geiss2013n, iyama2011clusterMAINpaper}. The higher analogue of Theorem~\ref{THM dynkin} is as follows.

\begin{theorem}\label{THM higher hereditary}
Let $A$ be an $n$-representation finite and $n$-hereditary algebra where $\cM$ is the unique $n$-cluster tilting object. Consider the full subcategory of the bounded derived category $D^b(\modd A)$ denoted by $\cM^k$ whose objects are
\begin{align*}
\left\{M\,\vert\, M = X[jn],\, 0 \leq j \leq k,\, \forall X \in \cM\right\},
\end{align*}
and whose morphisms are given by
\begin{align*}
\Hom_{\cM^k}(X[in],Y[jn])=
\begin{cases}
\Hom_A(X,Y), & \text{if } i=j,\\[4pt]
\Ext_A^n(X,Y), & \text{if } i=j-1,\\[4pt]
0, & \text{otherwise}.
\end{cases}
\end{align*}
Then, the algebra $\Gamma^k := \End_{\cM^k}(\cP)$ is a higher Auslander algebra of global dimension $(n+2)k+n+1$, where $\add\cP = \cM^k$. Moreover, the opposite quiver of $\Gamma^k$ coincides with the Auslander--Reiten quiver of $\cM^k$ in $\cM[n\mathbb{Z}] \subset D^b(\modd A)$.
\end{theorem}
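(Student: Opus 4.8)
The plan is to read off the two homological invariants of $\Gamma^k$ separately and then invoke Iyama's definition directly: I will show $\gldim\Gamma^k=(n+2)k+n+1$ and $\domdim\Gamma^k\ge(n+2)k+n+1$, so that with $d=(n+2)k+n$ the defining inequalities $\gldim\Gamma^k\le d+1\le\domdim\Gamma^k$ hold and $\Gamma^k$ is a higher Auslander algebra of the asserted global dimension. The entire argument mirrors the Dynkin case of Theorem \ref{THM dynkin}, with $D^b(\modd\Lambda)$ replaced by the $n$-cluster tilting subcategory $\cM[n\Zz]\subset D^b(\modd A)$, its triangulated structure replaced by the $(n+2)$-angulated structure of \cite{geiss2013n,iyama2011clusterMAINpaper}, ordinary $\Ext^1$ replaced by $\ext^n$, and three-term AR triangles replaced by AR $(n+2)$-angles. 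The first point to record is that $\cM^k$ is genuinely a full subcategory of $\cM[n\Zz]$: for $X,Y\in\cM$ we have $\Hom_{D^b}(X[in],Y[jn])=\ext^{(j-i)n}_A(X,Y)$, and since $\cM$ is $n$-cluster tilting the intermediate groups $\ext^m_A(X,Y)$ with $0<m<n$ vanish while $\gldim A=n$ forces $\ext^m_A=0$ for $m>n$; hence the only nonzero morphism spaces occur in degrees $j=i$ and $j=i+1$, exactly as prescribed, and the ``otherwise $0$'' clause is automatic. Because $A$ is $n$-representation finite, $\cM$ and hence $\cM^k$ have finitely many indecomposables, so $\Gamma^k$ is finite dimensional.

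Next I would pin down the structure of $\Gamma^k$. Writing $B:=\End_A(\cM)$ for the $n$-Auslander algebra of $A$ and $E:=\ext^n_A(\cM,\cM)$ for the Yoneda $B$-$B$-bimodule, the composition rules of $\cM^k$ (the Yoneda products $\Hom\otimes\ext^n\to\ext^n$ and $\ext^n\otimes\ext^n\to\ext^{2n}=0$) identify $\Gamma^k$ with the lower triangular $(k+1)\times(k+1)$ matrix algebra having $B$ on the diagonal, $E$ on the sub-diagonal and $0$ elsewhere; in particular any path crossing two layers through two $\ext^n$-arrows is zero. Since $\add\cP=\cM^k$ is a Krull--Schmidt category and $\Gamma^k$ is its endomorphism algebra, Auslander's description of the Gabriel quiver gives that the quiver of $(\Gamma^k)^{op}$ is the AR quiver of $\cM^k$; the AR theory needed here is inherited from the AR $(n+2)$-angles of $\cM[n\Zz]$, whose translate is the higher Auslander--Reiten translate $\tau_n$ coming from the Serre functor $\mathbb{S}=\tau_n\Sigma$ with $\Sigma=[n]$. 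This both proves the last sentence of the theorem and displays the quiver as $k+1$ copies of the $n$-AR quiver of $\cM$ glued by the $\ext^n$-arrows.

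For the global dimension I would compute the minimal projective resolutions of the simple $\Gamma^k$-modules. Applying $\Hom_{\cM^k}(\cP,-)$ to the $n$-almost split sequences of $\modd A$ produces, within a single layer, the length $n+1$ resolutions $0\to P_{\tau_n X}\to P_{E_n}\to\cdots\to P_{E_1}\to P_X\to S_X\to0$ familiar from the $n$-Auslander algebra $B$. The new phenomenon over $\Gamma^k$ is that a syzygy need not remain projective: the degree shift built into $\tau_n=\mathbb{S}[-n]$ on $\cM[n\Zz]$ links each layer to the adjacent one, so the relevant AR $(n+2)$-angles are not confined to one layer and the resolution continues across the boundary. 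Splicing these images of $n$-almost split sequences and layer-linking AR $(n+2)$-angles, the deepest simples have resolutions threading through all $k+1$ layers, one boundary layer contributing a truncated segment of length $n+1$ and each of the remaining crossings contributing $n+2$, for a maximal length $(n+2)k+n+1$; the $n$-cluster tilting relations $\ext^m_A=0$ $(0<m<n)$ together with $\ext^{2n}_A=0$ are exactly what force exactness and minimality at the splicing junctions. Exhibiting one simple that attains the bound gives $\gldim\Gamma^k=(n+2)k+n+1$.

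Finally, for the dominant dimension I would dualize this analysis. The projective--injective $\Gamma^k$-modules are identified through the injective $A$-modules (recall $DA\subseteq\cM$), and for each indecomposable projective $P_{X,i}$ I would build a coresolution by projective--injectives by applying $\Hom_{\cM^k}(\cP,-)$ to the same $n$-almost split sequences and AR $(n+2)$-angles, now read as an injective copresentation that threads through the layers; showing this coresolution has length at least $(n+2)k+n+1$ yields $\domdim\Gamma^k\ge(n+2)k+n+1$ and completes the verification of Iyama's inequalities. I expect the main obstacle to be the bookkeeping at the two boundary layers $i=0$ and $i=k$, where the periodic AR pattern is truncated: one must verify that the spliced projective resolutions stay exact and minimal precisely there, that no unforeseen morphisms of $\cM^k$ shorten them, and dually that the projective--injective coresolutions do not terminate prematurely. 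This is where $n$-representation finiteness and the precise action of the Serre functor $\mathbb{S}=\tau_n\Sigma$ on $\cM[n\Zz]$ enter decisively.
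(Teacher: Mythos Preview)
Your overall strategy---verify $\gldim\Gamma^k=(n+2)k+n+1$ and $\domdim\Gamma^k\ge(n+2)k+n+1$ separately---matches the paper, and the triangular matrix description of $\Gamma^k$ and the AR-quiver argument are fine. But there is a genuine gap in the dominant dimension step, and a key technical lemma is missing from the global dimension step.

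The gap is in your identification of the projective--injective $\Gamma^k$-modules. You say they are ``identified through the injective $A$-modules,'' which suggests only the $\Hom_{\cM^k}(\cP,I)$ with $I\in\add DA$ are projective--injective. In fact the paper proves (Proposition~\ref{prop projective-injective modules Higher}, via the Serre functor isomorphism $\Hom_{D^b}(\cP,X[jn])\cong D\Hom_{D^b}(\tau_n^{-1}X[(j-1)n],\cP)$) that $\Hom_{\cM^k}(\cP,X[jn])$ is projective--injective for \emph{every} $X\in\cM$ and every $1\le j\le k$. In other words, all projectives in layers $1,\dots,k$ are injective; only the non-injective objects in layer $0$ fail. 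Without this, a coresolution built only from the injective $A$-modules cannot possibly be long enough. The paper then does \emph{not} use almost split sequences for $\domdim$: it takes the injective resolution $0\to M\to I_0\to\cdots\to I_n\to 0$ of a non-injective $M\in\cM$, extends it by shifts to the long sequence $0\to M\to I_0\to\cdots\to I_n\to M[n]\to\cdots\to I_n[kn]$ in $\cM^k$, applies $\Hom_{\cM^k}(\cP,-)$, and observes that every term past the first is projective--injective by the above; the cokernel at the end is then identified with an injective in the $n$-Auslander algebra $\End_A(\cM)$ via $\Hom_{\cM^k}(\cP,I_i[kn])\cong\Hom_A(G,I_i)$.

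For $\gldim$, your splicing idea is morally correct but you are missing the engine that makes it rigorous: the paper's Lemma~\ref{lemma hom functor is exact on n angles higher version} (itself resting on Iyama's Lemma~\ref{lemma iyama's ext n}) shows that $\Hom_{\cM^k}(\cP,-)$ is left exact on any sequence $0\to X_0\to\cdots\to X_{n+1}\to X_0[n]\to\cdots\to X_{n+1}[kn]\to 0$ coming from an exact sequence in $\cM$. This is what certifies that your spliced complexes are actually exact; without it the ``splicing junctions'' you worry about are not under control. The paper's upper bound argument (Proposition~\ref{prop global dimension of higher algebra}) is a case analysis on the projective presentation $M_1\to M_0$ rather than a direct computation for each simple, but either route needs that exactness lemma.
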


In other words, we can glue higher Auslander algebras arising from $n$-representation finite $n$-hereditary algebras in a suitable way such that the resulting algebra is again higher Auslander. We give the proof in Section~\ref{section n-rep finite}. Similar to Theorem~\ref{thm uniqueness gamma dynkin}, algebras $\Gamma^k$ are unique in the following sense:

\begin{theorem}\label{thm uniqueness gamma higher}
Let $\cC = \add \cG$ be a convex subcategory of $\cM[n\mathbb{Z}]$ such that $\cM^0 \varsubsetneq \cC \varsubsetneq \cM^k$.
The following statements are equivalent:
\begin{enumerate}[label=\roman*)]
\item $\End_{D^b(\modd A)}(\cG)$ is a higher Auslander algebra;
\item $\End_{D^b(\modd A)}(\cG) \cong \Gamma^i$ for some $i$, $1 \le i \le k-1$;
\item $\cC \cong \cM^i$ for some $i$, $1 \le i \le k-1$.
\end{enumerate}
\end{theorem}

In general, the construction or classification of $n$-representation finite or $n$-hereditary algebras remain open problems. Significant progress has been made in this area, for example \cite{iyama2011clusterMAINpaper}, \cite{herschend2011fractionally}, \cite{herschend2014iyamaopperman}, \cite{iyama2008auslanderreitenrevisited}, \cite{vaso2019n}, and \cite{haugland2022role}. We remark that the algebras discussed in Theorems~\ref{THM dynkin} and~\ref{THM higher hereditary} arise from $n$-hereditary $n$-representation finite algebras.

\begin{theorem}\label{THM sigma k algebra}
1) Let $\cQ$ be the projective--injective object of $\Gamma^k = \End_{\cS^k}(\cP)$ for some $k \geq 1$. Then $\Sigma^k := \End_{\Gamma^k}(\cQ)$ is a $d$-representation finite and $d$-hereditary algebra, where $d = 3k + 1$. Any $d$-cluster tilting subcategory of $\modd \Sigma^k$ is of the form $\add\left(D\Sigma^k \oplus \tau_{d} D\Sigma^k\right)$,
which is the minimal generator-cogenerator of $\modd \Sigma^k$. Moreover, $\Sigma^k$ can be realized as the endomorphism algebra of the fundamental domain of the $k$-cluster category $D^b(\modd \Lambda)/\tau^{-1}[k]$.

2) Let $\cQ$ be the projective--injective object of $\Gamma^k = \End_{\cM^k}(\cP)$ for some $k \geq 1$. Then $\Sigma^k := \End_{\Gamma^k}(\cQ)$ is a $d$-representation finite and $d$-hereditary algebra, where $d = (n+2)k + n$. Any $d$-cluster tilting subcategory of $\modd \Sigma^k$ is of the form $\add\left(D\Sigma^k \oplus \tau_{d} D\Sigma^k\right)$,
which is the minimal generator--cogenerator of $\modd \Sigma^k$. Moreover, $\Sigma^1$ can be realized as the endomorphism algebra of the fundamental domain of the higher cluster category $\cM \oplus A[n]$.
\end{theorem}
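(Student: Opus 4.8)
The plan is to read off everything from Iyama's higher Auslander correspondence \cite{iyama2007auslandercorrespondence}, bootstrapped by the structural description of $\Gamma^k$ and of the Auslander--Reiten quiver of $\cS^k$ (resp.\ $\cM^k$) established in Theorems \ref{THM dynkin} and \ref{THM higher hereditary}. Since those theorems give $\gldim\Gamma^k\le d+1\le\domdim\Gamma^k$ with $d=3k+1$ (resp.\ $d=(n+2)k+n$), the inverse direction of the correspondence applies verbatim: writing $\cQ$ for the basic projective-injective object and $\Sigma^k=\End_{\Gamma^k}(\cQ)$, there is a $d$-cluster-tilting $\Sigma^k$-module $\cM'$ with $\Gamma^k\cong\End_{\Sigma^k}(\cM')$. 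Thus the existence of a $d$-cluster-tilting module over $\Sigma^k$ is automatic, and three things remain to be proved: (i) an explicit description of $\Sigma^k$ as the endomorphism algebra of a fundamental domain of the $k$-cluster category; (ii) $\gldim\Sigma^k=d$, i.e.\ $d$-heredity; and (iii) $d$-representation finiteness together with the collapse of the $d$-cluster-tilting module to the minimal generator-cogenerator.

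For (i) I would use that $\cS^k$ is simply the full subcategory of $D^b(\modd\Lambda)$ on the objects $X[j]$, $0\le j\le k$, so that its morphism spaces are the derived Hom-spaces recorded in Definition \ref{definition dynkin category}. Reading the Auslander--Reiten quiver of $\cS^k$ from Theorem \ref{THM dynkin}, the indecomposable summands of $\cQ$ are exactly the vertices that are simultaneously of source type (projective) and sink type (injective); a count shows that precisely the $s-r$ indecomposables of the form $X[k]$ with $X$ non-projective fail to be injective (here $s$ is the number of indecomposable $\Lambda$-modules and $r=\rank\Lambda$), so that $\add\cQ$ corresponds to $\{X[j]\,:\,0\le j\le k-1\}\cup\{P[k]\,:\,P\in\proj\Lambda\}$. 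This is a fundamental domain $\mathcal F$ of the orbit category $D^b(\modd\Lambda)/\tau^{-1}[k]$, and since $\Hom_{\cS^k}(\cP,-)$ is fully faithful on $\cS^k=\add\cP$ and the orbit projection is fully faithful on $\mathcal F$, we obtain that $\Sigma^k$ is isomorphic to the endomorphism algebra of $\mathcal F$ computed in $D^b(\modd\Lambda)/\tau^{-1}[k]$, which is exactly assertion (i). The identical computation in $\cM^k$ with $k=1$ returns the fundamental domain $\cM\oplus A[n]$, giving the corresponding statement in part 2.

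For (ii) and (iii) I would work inside the explicit algebra $\Sigma^k=\End(\mathcal F)$, whose quiver and relations are computable because only $\Hom_\Lambda$ and $\Ext^1_\Lambda$ occur between the shifts in $\mathcal F$; concretely $\Sigma^k$ is built from $k$ copies of the classical Auslander algebra $\End_\Lambda(\bigoplus_X X)$ glued along the $\Ext^1$-bridges and capped by the projective layer $\proj\Lambda[k]$. I would compute minimal projective resolutions of the simple $\Sigma^k$-modules by induction on $k$, peeling off the top layer by an APR-tilt/recollement reduction, and check that the longest resolution has length exactly $d$ — each $\Ext^1$-bridge raising projective dimension by $3$ (resp.\ $n+2$), mirroring the jump in $\gldim\Gamma^k$ per level in Theorems \ref{THM dynkin}/\ref{THM higher hereditary}; this gives $\gldim\Sigma^k=d$. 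With $d$-heredity and the $d$-cluster-tilting module $\cM'$ in hand, $d$-representation finiteness follows from the standard criterion characterizing $d$-representation-finite algebras among those of global dimension $d$ admitting a $d$-cluster-tilting module (cf.\ \cite{iyama2011clusterMAINpaper}, \cite{herschend2011fractionally}). Finally, since $d=3k+1\ge2$ one has $\tau_d(P)=\tau\Omega^{d-1}(P)=0$ for every projective $P$; so it suffices to verify that $\tau_d$ carries each indecomposable injective into $\add(\Sigma^k\oplus D\Sigma^k)$ — in fact onto the non-injective projectives — which forces $\tau_d^2(D\Sigma^k)=0$ and hence $\bigoplus_{j\ge0}\tau_d^j(D\Sigma^k)=D\Sigma^k\oplus\tau_d(D\Sigma^k)=\Sigma^k\oplus D\Sigma^k$, the minimal generator-cogenerator; the uniqueness of the $d$-cluster-tilting subcategory is then exactly $d$-representation finiteness.

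The main obstacle will be the combined content of (ii) and (iii): proving that the $d$-th Auslander--Reiten translate sends injectives exactly onto the non-injective projectives (so that the $\tau_d$-orbit has length two and $\cM'$ collapses to $\Sigma^k\oplus D\Sigma^k$), while simultaneously pinning the global dimension to be exactly $d$ rather than merely $\le d+1$. Both depend on controlling how the $\Ext^1$- (resp.\ $\Ext^n$-) bridges in $\mathcal F$ interact with minimal projective resolutions over $\Sigma^k$, and I expect the cleanest route is the inductive peeling of the top layer $\proj\Lambda[k]$ together with transporting the already-constructed module $\cM'$ through the identification in (i), so as to avoid recomputing the $d$-cluster-tilting object from scratch.
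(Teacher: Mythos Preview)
Your overall architecture (Auslander correspondence $\Rightarrow$ existence of a $d$-cluster-tilting module, then identify $\Sigma^k$ explicitly, then verify $\gldim\Sigma^k=d$ and collapse of the cluster-tilting module) matches the paper. But there is one concrete error and two places where the paper's route is substantially shorter.

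\textbf{The identification in (i) is inverted.} You claim that the projective non-injective $\Gamma^k$-modules correspond to $X[k]$ with $X$ non-projective, so that $\add\cQ$ sits over $\{X[j]:0\le j\le k-1\}\cup\{P[k]\}$. Proposition~\ref{prop gamma k injectivity} shows the opposite: $\Hom_{\cS^k}(\cP,N)$ is projective-injective exactly when $N$ is an injective $\Lambda$-module \emph{or} $N=X[j]$ with $1\le j\le k$; the projective non-injectives are the $\Hom_{\cS^k}(\cP,X)$ with $X\in\modd\Lambda$ non-injective. Thus $\add\cQ$ corresponds to $D\Lambda\oplus\bigoplus_{1\le j\le k}G[j]$, and the paper reaches the standard fundamental domain $G\oplus\cdots\oplus G[k-1]\oplus\Lambda[k]$ only after applying the shift $[-1]$ and the identification $\Lambda[k]\leftrightarrow D\Lambda[k-1]$ inside $D^b(\modd\Lambda)$. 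Your direct claim that the orbit projection is fully faithful on the fundamental domain is also not what is used; the paper works entirely in $\cS^k\subset D^b(\modd\Lambda)$ and only matches with the fundamental domain at the end via this shift.

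\textbf{For (iii) the paper avoids computing $\tau_d$.} Rather than tracking $\tau_d$ on each injective, the paper counts: the $d$-cluster-tilting object $\cC$ has $(k+1)\cdot Aus(Q)$ indecomposable summands (the rank of $\Gamma^k$), while $\Sigma^k\oplus D\Sigma^k$ already has $(k-1)Aus(Q)+2n+2(Aus(Q)-n)=(k+1)Aus(Q)$ indecomposables (projective-injectives plus projective non-injectives plus injective non-projectives, using the Serre-functor count in Remark~\ref{remark when sigma k is projective injective}). Since $\cC\supseteq\add(\Sigma^k\oplus D\Sigma^k)$ always, equality is forced by cardinality. This is much lighter than your proposed direct computation of $\tau_d$.

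\textbf{For (ii) the paper exhibits an explicit resolution.} Instead of an inductive recollement argument, the paper drops the first term of the long sequence $0\to M\to I_0\to I_1\to M[1]\to\cdots\to I_1[k]\to 0$ and applies $\Hom_{\cS^k}(\cG,-)$ with $\cG=D\Lambda\oplus\bigoplus_{1\le j\le k}G[j]$; this produces a $\Sigma^k$-projective resolution of length exactly $3k+1$, with left-exactness inherited from Lemma~\ref{lemma hom functor is exact on triangles}. Your inductive peeling would work in principle, but this single sequence does the job without tilting or recollement machinery.
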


Cluster categories were introduced in \cite{buan2006tilting} to categorify cluster algebras. They are defined as the orbit category $D^b(\modd\Lambda)/\tau^{-1}[1]$. Similarly, $m$-cluster categories were defined in \cite{ASSEM2006548}, \cite{ASSEM2008884} as the orbit categories $D^b(\modd\Lambda)/\tau^{-1}[m]$. Thus, the fundamental domain of the $m$-cluster category is 
\begin{align*}
\Lambda[m] \oplus \bigoplus\limits_{\substack{X \in \text{Ind}\Lambda \\ 0 \le j \le m-1}} X[j]
\end{align*}
where $\Lambda = \mathbb{K}Q$ and $Q$ is Dynkin. For a detailed exposition, see \cite{reiten2010cluster}.

Cluster categories for non-hereditary algebras were introduced by C. Amiot in \cite{Ami09}. Within Amiot's cluster categories, higher cluster categories were introduced in \cite{oppermann2012higher}, whose fundamental domains are of the form $\cM \oplus A[n]$. For our purposes, we consider
\begin{align*}
A[mn] \oplus \bigoplus\limits_{\substack{X \in \cM \\ 0 \le j \le m-1}} X[jn]
\end{align*}
as fundamental domains of higher cluster categories, where the corresponding endofunctor is $\tau^{-n}[mn]:\cM[n\mathbb{Z}]\rightarrow\cM[n\mathbb{Z}]$.

As a consequence of Theorem~\ref{THM sigma k algebra}, we obtain another connection between cluster theory and higher dimensional homological algebra. In particular, it provides a rich source of $d$-representation finite and $d$-hereditary algebras. We describe the quiver of $\End_{\Gamma^k}(\cQ)$ in Section~\ref{section endomorphism algebras of gamma}. 

As an application of Theorems~\ref{thm uniqueness gamma dynkin} and~\ref{thm uniqueness gamma higher}, we show that the algebras $\Sigma^k$ are unique in the following sense.

\begin{theorem}\label{thm uniqueness sigma all} 1) Let $\cC=\add\cG$ be a convex subcategory of $D^b(\modd\Lambda)$ such that 
\begin{align*} \cS^0\varsubsetneq \cC \varsubsetneq \cS^k.
 \end{align*} Then, $\End_{D^b(\modd\Lambda)}(\cG)$ is a $d$-representation finite algebra if and only if $\cC\cong\cS^i\oplus\Lambda[i+1] $ for some $i$ $0\leq i\leq k-1$.\\ 2) Let $\cC=\add\cG$ be a convex subcategory of $\cM[n\mathbb{Z}]$ such that \begin{align*} \cM^0\varsubsetneq \cC \varsubsetneq \cM^k.
  \end{align*} Then, $\End_{D^b(\modd A)}(\cC)$ is a $d$-representation finite algebra if and only if  $\cC\cong\cM^i\oplus A[(i+1)n]$ for some $i$ $0\leq i\leq k-1$. \end{theorem}

It is natural to ask what the other cluster-tilting objects in $\modd \Sigma^k$ are. We are able to describe them in a special case: higher Nakayama algebras, discussed in Section \ref{applications to nakayama}. Specifically, we present a class of higher Nakayama algebras that are $d$-representation finite and isomorphic to certain $\Sigma^k$, for which we describe the Kupisch series in Proposition \ref{prop B is s-rep finite numerical}. Consequently, they contain $d\mathbb{Z}$-cluster tilting objects.

In the following section, we give preliminaries. Then, in sections \ref{section dynkin},\ref{section n-rep finite},\ref{section endomorphism algebras of gamma} and \ref{section uniqueness} we prove our main results. The last section is devoted for final remarks and examples.

\subsection{Acknowledgments} We are grateful to K. Igusa, O. Iyama, P. J{\o}rgensen, B. Keller, and G. Todorov for their interest in this work. We appreciate the help and support of O. Iyama, P. J{\o}rgensen, and G. Todorov in various discussions on the material, which are leading to other works.

\section{Preliminaries}\label{section prelim}
\subsection{Derived Category for Dynkin Case}
Let $\modd\Lambda$ be the module category of the algebra $\Lambda=\mathbb{K}Q$ for a Dynkin quiver Q. The category $\modd\Lambda$ and the bounded derived category $D^b(\modd\Lambda)$ are well understood. We refer to \cite{happel1988triangulated} for details. For the objects of $\modd\Lambda$, we do not type the shift functor $[0]$. Consider an exact sequence in $\modd\Lambda$
 \begin{align*}
 0\rightarrow A\rightarrow B\rightarrow C\rightarrow 0.
 \end{align*}
 This can be completed into the triangle 
  \begin{align*}
 A\rightarrow B\rightarrow C\rightarrow A[1]
 \end{align*}
 in $D^b(\modd\Lambda)$, since it is a triangulated category. So, any exact sequence gives rise to the sequence 
\begin{align*}
\cdots\rightarrow C[-1]\rightarrow A\rightarrow B\rightarrow C\rightarrow A[1]\rightarrow B[1]\rightarrow C[1]\rightarrow A[2]\rightarrow \cdots .
\end{align*} 
The category $\cS^k$ contains the sequence which we obtain by dropping negative shifts and shifts greater than $k$, so we get 
\begin{align*}
0\rightarrow A\rightarrow B\rightarrow C\rightarrow A[1]\rightarrow\cdots\rightarrow A[k]\rightarrow B[k]\rightarrow C[k]\rightarrow 0.
\end{align*}
We recall that the category $\cS^k$ is the full subcategory of $D^b(\modd\Lambda)$. We fix the notation, $G$ is additive generator of $\modd\Lambda$, i.e., $\add G=\modd\Lambda$, $\cP$ is additive generator of $\cS^k$, i.e. $\add\cP=\cS^k$.

\begin{lemma}\label{lemma hom functor is exact on triangles} Let $0\rightarrow M\rightarrow K\rightarrow N\rightarrow 0$ be an exact sequence in $\modd\Lambda$. Then the functor $\Hom_{\cS^k}(\cP,-)$ is left exact on the sequence
\begin{align} \label{eq in lemma prelim}
0\rightarrow M\rightarrow K\rightarrow N\rightarrow M[1]\rightarrow \cdots \rightarrow K[k]\rightarrow N[k]\rightarrow 0.
\end{align}
\end{lemma}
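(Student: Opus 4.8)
The plan is to apply the representable functor $\Hom_{\cS^k}(\cP,-)$ term by term and to recognize the resulting complex of $\mathbb{K}$-vector spaces as several interleaved copies of the six-term exact sequence of $\Ext_\Lambda(G,-)$. First I would decompose the generator as $\cP=\bigoplus_{j=0}^k G[j]$ and read off from Definition \ref{definition dynkin category} that, for $Y\in\modd\Lambda$,
\[
\Hom_{\cS^k}(\cP,Y[m])=\Hom_\Lambda(G,Y)\oplus\Ext^1_\Lambda(G,Y)\quad(1\le m\le k),
\]
while $\Hom_{\cS^k}(\cP,Y)=\Hom_\Lambda(G,Y)$ at $m=0$, since the summand $G[j]$ contributes $\Hom_\Lambda(G,Y)$ when $j=m$, contributes $\Ext^1_\Lambda(G,Y)$ when $j=m-1$, and contributes $0$ otherwise. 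Writing $h(Y)=\Hom_\Lambda(G,Y)$ and $e(Y)=\Ext^1_\Lambda(G,Y)$, the sequence \eqref{eq in lemma prelim} is carried to a complex of $\Gamma^k$-modules whose terms are $h(M),h(K),h(N)$ in degree $0$ and $h(-)\oplus e(-)$ for $M,K,N$ in each degree $1\le m\le k$; since exactness can be tested on underlying vector spaces, I would work there and let the $\Gamma^k$-module structure ride along.

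Next I would identify the differentials. A degree-preserving arrow $Y[m]\to Z[m]$ (one of $\iota\colon M\to K$ or $\pi\colon K\to N$, suitably shifted) induces the diagonal map acting by $\Hom_\Lambda(G,-)$ on the $h$-component and by $\Ext^1_\Lambda(G,-)$ on the $e$-component. The connecting arrow $N[m]\to M[m+1]$, which is the extension class $\delta\in\Ext^1_\Lambda(N,M)=\Hom_{\cS^k}(N[m],M[m+1])$, behaves differently: precomposing $\delta$ with an element $\psi$ of the $h(N)$-component (coming from $G[m]$) produces the pullback $\psi^*\delta$, which is exactly the connecting homomorphism $\partial\colon\Hom_\Lambda(G,N)\to\Ext^1_\Lambda(G,M)$ and lands in the $e(M)$-component; whereas precomposing $\delta$ with the $e(N)$-component (coming from $G[m-1]$) lands in $\Hom_{\cS^k}(G[m-1],M[m+1])$, whose shift difference is $2$ and is therefore $0$ by the definition of $\cS^k$. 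Hence each connecting map equals $(0,\partial)$ and annihilates the preceding $e$-component.

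The key input is that $\Lambda=\mathbb{K}Q$ is hereditary, so $\Ext^2_\Lambda=0$ and the long exact $\Ext_\Lambda(G,-)$-sequence of $0\to M\to K\to N\to 0$ collapses to the exact six-term sequence
\[
0\to h(M)\xrightarrow{\iota_*} h(K)\xrightarrow{\pi_*} h(N)\xrightarrow{\partial} e(M)\xrightarrow{\iota^1_*} e(K)\xrightarrow{\pi^1_*} e(N)\to 0.
\]
With this in hand I would verify exactness node by node. In degree $0$, exactness at $h(M)$ and $h(K)$ is the left-exact part of the six-term sequence, and at $h(N)$ the outgoing map $(0,\partial)$ has kernel $\ker\partial=\im\pi_*$. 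At the interior term $h(M)\oplus e(M)$ following a connecting map, the incoming map $(0,\partial)$ has image $0\oplus\im\partial$, while the outgoing diagonal map $(\iota_*,\iota^1_*)$ has kernel $\ker\iota_*\oplus\ker\iota^1_*=0\oplus\im\partial$; the terms $h(K)\oplus e(K)$ and $h(N)\oplus e(N)$ are handled by matching $\ker$ and $\im$ of the diagonal maps against consecutive arrows of the six-term sequence, the only new point being that the full $e(N)$-slot lies in the kernel of the next connecting map and is filled by $\im\pi^1_*=e(N)$. Thus each check reduces to one equality $\ker=\im$ read from the exact six-term sequence.

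Finally, at the last term $h(N)\oplus e(N)$ in degree $k$ there is no outgoing arrow, and the incoming diagonal map $(\pi_*,\pi^1_*)$ surjects only onto $\im\pi_*\oplus e(N)$; since $\im\pi_*=\ker\partial$ is proper in $h(N)$ whenever $\delta\neq 0$, surjectivity fails at this single spot, which is exactly why the functor is left exact rather than exact. The main obstacle I anticipate is the bookkeeping of Step 2: one must check honestly, through the Yoneda interpretation of composition in $D^b(\modd\Lambda)$, both that precomposing the connecting morphism with an ordinary map realizes the $\Ext$-connecting homomorphism $\partial$, and that the shift-degree-$2$ vanishing of $\cS^k$ is precisely what decouples the $e$-strand from the subsequent connecting map, so that the interleaved strands reassemble into copies of the six-term sequence.
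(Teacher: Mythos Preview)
Your proposal is correct and follows essentially the same approach as the paper: decompose $\Hom_{\cS^k}(\cP,Y[m])$ as $\Hom_\Lambda(G,Y)\oplus\Ext^1_\Lambda(G,Y)$, and recognise the resulting complex as interleaved copies of the six-term exact sequence obtained from $\gldim\Lambda=1$. The paper is terser---it simply writes down the target ``split exact sequence'' as a direct sum of shifted copies of the six-term sequence and asserts the isomorphism---whereas you do the honest bookkeeping the paper omits: you identify the connecting morphism $N[m]\to M[m+1]$ as inducing $(0,\partial)$ (with the $e$-component killed by the shift-$2$ vanishing in $\cS^k$) and then verify exactness at each node. This extra care is worthwhile, since the paper's claim that applying $\Hom_{\cS^k}(\cP,-)$ literally reproduces the displayed direct-sum sequence rests precisely on the identification of differentials that you carry out.
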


\begin{proof}
The functor $\Hom_{\Lambda}(G,-)$ induces the long exact sequence
\begin{align*}
0\rightarrow \Hom_{\Lambda}(G, M)\rightarrow\Hom_{\Lambda}(G, K)\rightarrow\Hom_{\Lambda}(G, N)\rightarrow\ext^1_{\Lambda}(G, M)\rightarrow \ext^1_{\Lambda}(G,K)\rightarrow \ext^1_{\Lambda}(G,N)\rightarrow 0
\end{align*}
since $\gldim\Lambda=1$. So we can construct split exact sequence of the form
\begin{gather*}
0\rightarrow \Hom_{\Lambda}(G, M)\rightarrow\Hom_{\Lambda}(G, K)\rightarrow\Hom_{\Lambda}(G, N)\rightarrow\ext^1_{\Lambda}(G, M)\oplus\Hom_{\Lambda}(G,M)\rightarrow\\ \ext^1_{\Lambda}(G,K)\oplus\Hom_{\Lambda}(G,K)\rightarrow \ext^1_{\Lambda}(G,N)\oplus\Hom_{\Lambda}(G,N)\rightarrow 
\ext^1_{\Lambda}(G, M)\oplus\Hom_{\Lambda}(G,M)\rightarrow\\ \ext^1_{\Lambda}(G,K)\oplus\Hom_{\Lambda}(G,K)\rightarrow \ext^1_{\Lambda}(G,N)\oplus\Hom_{\Lambda}(G,N)\rightarrow\ext^1_{\Lambda}(G, M)\oplus\Hom_{\Lambda}(G,M)\rightarrow\\\vdots\\ \ext^1_{\Lambda}(G, M)\oplus\Hom_{\Lambda}(G,M)\rightarrow \ext^1_{\Lambda}(G,K)\oplus\Hom_{\Lambda}(G,K)\rightarrow \ext^1_{\Lambda}(G,N)\oplus\Hom_{\Lambda}(G,N)\rightarrow\\ \ext^1_{\Lambda}(G, M)\rightarrow \ext^1_{\Lambda}(G,K)\rightarrow \ext^1_{\Lambda}(G,N)\rightarrow 0
\end{gather*}
Since $\Hom_{\cS^k}(\cP,X[j])\cong\Hom_{\cS^k}(\bigoplus _{0\leq i\leq k}G[i],X[j])\cong \Hom_{\Lambda}(G,X)\oplus \ext^1_{\Lambda}(G,X)$ for any $X[j]$ $1\leq j\leq k$ where $X\in\modd\Lambda$, $\Hom_{\cS^k}(\cP,-)$ applied to \ref{eq in lemma prelim} is isomorphic to the sequence above, hence it is left exact.
\end{proof}

\begin{corollary} Let $0\rightarrow M\rightarrow I_0\rightarrow I_1\rightarrow 0$ be the injective coresolution of $M\in\modd\Lambda$. Then the functor $\Hom_{\cS^k}(\cP,-)$ is left exact on the sequence 
\begin{align*}
0\rightarrow M\rightarrow I_0\rightarrow I_1\rightarrow M[1]\rightarrow I_0[1]\rightarrow\cdots I_1[k]\rightarrow 0.
\end{align*}
\end{corollary}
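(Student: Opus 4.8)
The plan is to deduce this directly from the preceding Lemma, with the single additional input that, over a Dynkin path algebra, an injective resolution is already a short exact sequence.

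First I would recall that $\Lambda=\mathbb{K}Q$ is hereditary, so $\gldim\Lambda=1$ (a fact already exploited in the Lemma). Hence every $M\in\modd\Lambda$ has injective dimension at most one, and its minimal injective resolution has the form $0\rightarrow M\rightarrow I_0\rightarrow I_1\rightarrow 0$ with $I_0,I_1$ injective. This is a genuine short exact sequence in $\modd\Lambda$, so it is exactly the sort of input the Lemma requires.

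Next I would apply the Lemma to this short exact sequence, taking $K=I_0$ and $N=I_1$. Under this substitution the sequence $0\rightarrow M\rightarrow K\rightarrow N\rightarrow M[1]\rightarrow\cdots\rightarrow K[k]\rightarrow N[k]\rightarrow 0$ of the Lemma becomes verbatim the sequence $0\rightarrow M\rightarrow I_0\rightarrow I_1\rightarrow M[1]\rightarrow I_0[1]\rightarrow\cdots\rightarrow I_1[k]\rightarrow 0$ of the Corollary, and the Lemma asserts precisely that $\Hom_{\cS^k}(\cP,-)$ is left exact on it. This yields the claim.

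I do not expect any real obstacle, since the statement is a special case of the Lemma. The only point worth noting is that the injectivity of $I_0$ and $I_1$ forces $\Ext^1_\Lambda(G,I_0)=\Ext^1_\Lambda(G,I_1)=0$, which collapses some summands in the split exact sequence built in the Lemma's proof but leaves left exactness intact. I suspect the Corollary is singled out because it is the injective-resolution form that will be convenient later for computing coresolutions and the dominant dimension of $\Gamma^k$.
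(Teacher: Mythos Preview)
Your proposal is correct and follows exactly the paper's approach: note that $\Lambda$ is hereditary so the injective resolution is a genuine short exact sequence, then invoke the preceding Lemma with $K=I_0$, $N=I_1$. The additional remarks you make about the vanishing $\Ext^1_\Lambda(G,I_j)$ and the later use for dominant dimension are accurate side observations, but the argument itself is the same two-line deduction the paper gives.
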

\begin{proof}
For any $M\in\modd\Lambda$, not injective, the injective copresentation is exact since $\Lambda$ is hereditary. By lemma \ref{lemma hom functor is exact on triangles}, the result follows.

\end{proof}

\begin{remark}\label{remark serre functor on triangulated cat}
We recall Nakayama functor $\nu$ and its inverse $\nu^{-1}$
\begin{gather*}
\nu:=D\Hom_{\Lambda}(-,\Lambda): \modd\Lambda\rightarrow\modd\Lambda\\
\nu^{-1}:=\Hom_{\Lambda^{op}}(D-,\Lambda):\modd\Lambda\rightarrow \modd\Lambda
\end{gather*}
Derived Nakayama functor is
\begin{gather*} 
\nu:=DR\Hom_{\Lambda}(-,\Lambda): D^b(\modd\Lambda)\rightarrow D^b(\modd\Lambda)\\
\nu^{-1}:=R\Hom_{\Lambda^{op}}(D-,\Lambda):D^b(\modd\Lambda)\rightarrow D^b(\modd\Lambda)
\end{gather*}
which gives Serre functor of $D^b(\modd\Lambda)$, i.e.,
there exists a functorial isomorphism
\begin{align*}
\Hom_{D^b(\modd\Lambda)}(X,Y)\cong D\Hom_{D^b(\modd\Lambda)}(Y,\nu(X)).
\end{align*}

Since $D^b(\modd\Lambda)$ admits Auslander-Reiten triangles, by formula \cite[Prop 4.10]{happel1988triangulated} we get an autoequivalence, the Auslander-Reiten translation $\tau$, given by 
\begin{align*}
\Hom_{D^b(\modd\Lambda)}(-,(\tau X)[1])\cong D\Hom_{D^b(\modd\Lambda)}(X,-)
\end{align*}
\cite{keller2005triangulated}, \cite{happel1988triangulated}, where $\tau$ is Auslander-Reiten translate in $\modd\Lambda$.
\end{remark}
\subsection{Derived Category for $n$-Representation finite case} 
Let $A$ be a finite dimensional algebra. Following \cite{iyama2007auslandercorrespondence}, let $\cM$ be a subcategory of $\modd A$. $\cM$ is called $n$-rigid if $\ext^i_{A}(\cM,\cM)=0$ for any $0<i<n$. $\cM$ is called $n$-cluster tilting subcategory if it is functorially finite and
\begin{align*}
\cM&=\{X\in\modd A\vert \ext^i_{A}(X,\cM)=0, 0<i<n\}\\
&=\{X\in\modd A\vert \ext^i_{A}(\cM,X)=0, 0<i<n\}
\end{align*}

Similarly, $n$-cluster tilting subcategories of derived categories introduced in \cite{iyama2011clusterMAINpaper} which we recall. $\cN$ of $D^b(\modd A)$ is $n$-cluster tilting subcategory if
\begin{align*}
\cN&=\{X\in D^b(\modd A) \vert \Hom_{D}(X,\cN[i])=0, 0<i<n\}\\
&=\{X\in D^b(\modd A) \vert \Hom_{D}(\cN[i],X)=0, 0<i<n\}.
\end{align*}

O. Iyama introduced "$n$-complete algebras" in \cite{iyama2011clusterMAINpaper} which is called now $n$-representation finite algebras. Later, in \cite{herschend2014iyamaopperman} $n$-representation infinite algebras were introduced and then both class of algebras are called $n$-hereditary algebras. Hence, to avoid any terminological complications, we say that an algebra $A$ is $n$-representation finite and $n$-hereditary if it admits a unique $n$-cluster tilting subcategory $\cM$ which is always of the form $\cM=\add(\bigoplus_j\tau^j_n(DA))$ together with the assumption $\gldim A\leq n$ where $\tau_n:=\tau\Omega^{n-1}$ is $n$-Auslander-Reiten translate, $\Omega:\modd A\rightarrow \modd A$ is syzygy functor.

In our set up, $\cM[n\mathbb{Z}]$ is $n$-cluster tilting subcategory of $D^b(\modd A)$. This category is $(n+2)$-angulated category (\cite[Theorem 1]{geiss2013n}), hence any long exact sequence 
\begin{align*}
0\rightarrow X\rightarrow A_1\rightarrow A_2\rightarrow\cdots A_n\rightarrow Y\rightarrow 0
\end{align*}
in $\cM$ can be completed into $(n+2)$ angle
\begin{align*}
 X\rightarrow A_1\rightarrow A_2\rightarrow\cdots A_n\rightarrow Y\rightarrow X[n] 
\end{align*}
in $\cN$, therefore we get the sequence of the form 
\begin{align*}
 Y[-n]\rightarrow X\rightarrow A_1\rightarrow A_2\rightarrow\cdots A_n\rightarrow Y\rightarrow X[n] \rightarrow\cdots\rightarrow Y[n]\rightarrow X[2n]\rightarrow \cdots.
\end{align*}
in $\cM[n\mathbb{Z}]$.

\begin{definition}\label{def the higher category M_k}
Let $A$ be an $n$-representation finite and $n$-hereditary algebra where $\cM$ is the unique $n$-cluster tilting object. Consider the full subcategory of the bounded derived category $D^b(\modd A)$ denoted by $\cM^k$ whose objects are
\begin{align*}
\left\{M\,\vert\, M=X[jn], 0\leq j\leq k, \forall X\in\cM\right\},
\end{align*}
and whose morphisms are given by
\begin{align*}
\Hom_{\cM^k}(X[in],Y[jn])=\begin{cases}
\Hom_{A}(X,Y),\,\quad \text{if} \,\quad i=j \\
\ext^n_{A}(X,Y),\,\,\,\quad \text{if}\,\quad i=j-1\\
\quad 0\,\quad\quad\quad\quad \text{otherwise}.
\end{cases}
\end{align*}
\end{definition}

The category $\cM^k$ contains the following sequence which is obtained by dropping negative shifts and shifts greater than $kn$, so we get 
\begin{align*}
0\rightarrow X\rightarrow A_1\rightarrow\cdots A_n\rightarrow Y\rightarrow X[n] \rightarrow\cdots\rightarrow Y[kn]\rightarrow 0.
\end{align*}
We use the same notation $\cP$ where $\add\cP=\cM^k$ and $G\in\modd A$ is additive generator of $\cM$, i.e., $\add G=\cM$.

We recall \cite[Lemma 3.5]{iyama2011clusterMAINpaper}.

\begin{lemma}\label{lemma iyama's ext n}
Let $A$ be a finite dimensional algebra such that $\gldim A\leq n$. Let $X\in\modd A$ and 
\begin{align*}
\xymatrix{
0\ar[r]& X_0\ar[r]& X_1\ar[r]&\cdots\ar[r]& X_n\ar[r]& X_{n+1}\ar[r]&0}
\end{align*}
an exact sequence in $\modd A$ with $X_i\in\add X$.
If $W\in\modd A$ satisfies $\ext^i_{A}(W,X)=0$ for any $0<i<n$, then there is an exact sequence
\begin{align*}
0\rightarrow\Hom_{A}(W,X_0)\rightarrow\Hom_{A}(W, X_1)\cdots\rightarrow \Hom_{A}(W,X_{n+1})\rightarrow\\
\ext^n(W,X_0)\rightarrow\ext^n(W,X_1)\rightarrow\cdots\rightarrow\ext^n(W,X_{n+1})\rightarrow 0
\end{align*}
\end{lemma}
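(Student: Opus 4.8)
The statement is the standard behaviour of the functor $\Hom_A(W,-)$ on an exact "self-resolving" complex of length $n+2$, and the cleanest route is through the hyper-$\Ext$ spectral sequence, the two hypotheses being precisely what collapse it onto two rows. The plan is to regard the exact sequence as an acyclic cochain complex and run the second hypercohomology spectral sequence of $R\Hom_A(W,-)$; the vanishing hypotheses force exactly two nonzero rows, and convergence to zero then pins down both the internal exactness and the single connecting isomorphism.

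First I would place the terms of
\[
0\to X_0\to X_1\to\cdots\to X_n\to X_{n+1}\to 0
\]
in cohomological degrees $0,1,\dots,n+1$ to form a cochain complex $X^\bullet$. Since the sequence is exact, $X^\bullet$ is acyclic, hence quasi-isomorphic to $0$ in $D^b(\modd A)$, so $R\Hom_A(W,X^\bullet)\simeq 0$. Next I would write down the spectral sequence
\[
E_1^{p,q}=\ext^q_A(W,X_p)\ \Longrightarrow\ H^{p+q}\big(R\Hom_A(W,X^\bullet)\big)=0,
\]
whose $E_1$-differential is induced by the differential of $X^\bullet$. The two hypotheses now do all the work: because $X_p\in\add X$ and $\ext^i_A(W,X)=0$ for $0<i<n$, every row with $0<q<n$ vanishes, while $\gldim A\le n$ kills all rows with $q>n$. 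Thus only the rows $q=0$ and $q=n$ survive, and their $E_2$-terms are the cohomologies of the two complexes in question:
\[
E_2^{p,0}=H^p\big(\Hom_A(W,X^\bullet)\big),\qquad E_2^{p,n}=H^p\big(\ext^n_A(W,X^\bullet)\big).
\]

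The key structural step is to observe that, with only the rows $q=0$ and $q=n$ present, every differential $d_r$ with $2\le r\le n$ lands in a zero row, so $E_2=E_{n+1}$, and the only differential that can connect the two rows is
\[
d_{n+1}\colon E_{n+1}^{0,n}\longrightarrow E_{n+1}^{n+1,0},
\]
since for any other bidegree the target index $p+n+1$ falls outside $\{0,\dots,n+1\}$. Convergence to the zero abutment then forces three things simultaneously: $E_2^{p,0}=0$ for $0\le p\le n$ (exactness of the $\Hom$-complex in those degrees, including left-exactness at $p=0$); $E_2^{p,n}=0$ for $1\le p\le n+1$ (exactness of the $\ext^n$-complex there, including right-exactness at $p=n+1$); and, because the two remaining corners $E_2^{n+1,0}$ and $E_2^{0,n}$ must also die, that $d_{n+1}$ is an isomorphism. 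Finally I would define the connecting map $\partial\colon\Hom_A(W,X_{n+1})\to\ext^n_A(W,X_0)$ as the composite $\Hom_A(W,X_{n+1})\twoheadrightarrow E_2^{n+1,0}\xrightarrow{\,d_{n+1}^{-1}\,}E_2^{0,n}\hookrightarrow\ext^n_A(W,X_0)$ and splice, obtaining the asserted exact sequence.

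The main obstacle is purely the bookkeeping that isolates $d_{n+1}$ as the unique cross-row differential and that translates "$E_\infty=0$" into exactness at each seam, in particular matching $\ker\partial$ with $\im\big(\Hom_A(W,X_n)\to\Hom_A(W,X_{n+1})\big)$ and $\im\partial$ with $\ker\big(\ext^n_A(W,X_0)\to\ext^n_A(W,X_1)\big)$. If one prefers to avoid spectral sequences, the same result follows by breaking $X^\bullet$ into short exact sequences $0\to C_{i-1}\to X_i\to C_i\to 0$ (with $C_0=X_0$, $C_n=X_{n+1}$, and $C_i=\im(X_i\to X_{i+1})$) and dimension-shifting: the hypotheses give $\ext^j_A(W,C_i)=0$ in the appropriate range, and the connecting homomorphisms of the successive long exact sequences assemble into $\partial$. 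I expect this elementary route to be more calculation-heavy but to present no real difficulty beyond tracking the vanishing of the intermediate $\ext^j_A(W,C_i)$.
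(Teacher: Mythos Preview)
The paper does not actually prove this lemma; it is quoted without argument as \cite[Lemma~3.5]{iyama2011clusterMAINpaper}. So there is no ``paper's own proof'' to compare against---you have supplied what the paper omits.

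Your spectral-sequence argument is correct. The acyclicity of $X^\bullet$ forces the abutment of the hypercohomology spectral sequence to vanish; the vanishing hypothesis on $\ext^i_A(W,X)$ together with $\gldim A\le n$ leaves only the rows $q=0$ and $q=n$ on the $E_1$-page; and your identification of $d_{n+1}\colon E_{n+1}^{0,n}\to E_{n+1}^{n+1,0}$ as the unique surviving cross-row differential, forced to be an isomorphism by $E_\infty=0$, is accurate. The splicing via $d_{n+1}^{-1}$ produces the connecting map with the correct kernel and image. Your alternative route---breaking $X^\bullet$ into short exact sequences $0\to C_{i-1}\to X_i\to C_i\to 0$ and dimension-shifting---is closer in spirit to how such lemmas are usually proved in Iyama's papers, and is the one most readers would expect; the spectral-sequence version is more conceptual and handles all the bookkeeping at once, at the cost of heavier machinery. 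Either is fine.
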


\begin{lemma}\label{lemma hom functor is exact on n angles higher version} Let A be $n$-representation finite $n$-hereditary algebra with $n$-cluster tilting subcategory $\cM$. Let
$\xymatrix{
0\ar[r]& X_0\ar[r]& X_1\ar[r]&\cdots\ar[r]& X_n\ar[r]& X_{n+1}\ar[r]&0}$\\
be an exact sequence in $\cM$. Then $\Hom_{\cM^k}(\cP,-)$ is left exact on the sequence

\begin{align}\label{eq in lemma hom functor is exact in n angles}
0\rightarrow X_0\rightarrow \cdots X_{n+1}\rightarrow X_{0}[n]\rightarrow \cdots \rightarrow X_{n+1}[kn]\rightarrow 0.
\end{align}
\end{lemma}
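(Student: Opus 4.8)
The plan is to mirror the strategy used in Lemma~\ref{lemma hom functor is exact on triangles}, replacing the hereditary input with Lemma~\ref{lemma iyama's ext n}. Concretely, I would first apply $\Hom_{A}(G,-)$ to the given exact sequence $0\to X_0\to\cdots\to X_{n+1}\to 0$ in $\cM$. Since $G$ is the additive generator of $\cM$ and $\cM$ is $n$-cluster tilting, we have $\ext^i_A(G,X)=0$ for all $0<i<n$ and all $X\in\cM$; combined with $\gldim A\le n$, Lemma~\ref{lemma iyama's ext n} applies with $W=G$ and yields the single exact sequence
\begin{align*}
0\rightarrow\Hom_{A}(G,X_0)\rightarrow\cdots\rightarrow\Hom_{A}(G,X_{n+1})\rightarrow\ext^n_A(G,X_0)\rightarrow\cdots\rightarrow\ext^n_A(G,X_{n+1})\rightarrow 0.
\end{align*}

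Next I would splice $k+1$ copies of this sequence into one long exact sequence by concatenation: the terminal block $\ext^n_A(G,X_\bullet)$ of one copy feeds into the initial block $\Hom_A(G,X_\bullet)\oplus\ext^n_A(G,X_\bullet)$ of the next, exactly as in the Dynkin case. The key computation, which parallels the isomorphism invoked at the end of Lemma~\ref{lemma hom functor is exact on triangles}, is to identify
\begin{align*}
\Hom_{\cM^k}(\cP,X[jn])\cong\Hom_{A}(G,X)\oplus\ext^n_A(G,X)\quad\text{for }1\le j\le k,
\end{align*}
and $\Hom_{\cM^k}(\cP,X)\cong\Hom_A(G,X)$ for $j=0$. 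This follows from the definition of morphisms in $\cM^k$: writing $\cP\cong\bigoplus_{0\le i\le k}G[in]$, the summand $G[jn]$ contributes $\Hom_A(G,X)$ via the $i=j$ case, while $G[(j-1)n]$ contributes $\ext^n_A(G,X)$ via the $i=j-1$ case, and all other shifts contribute $0$. Applying $\Hom_{\cM^k}(\cP,-)$ to the sequence~\eqref{eq in lemma hom functor is exact in n angles} then reproduces termwise the concatenated long exact sequence, which shows left exactness.

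The step I expect to require the most care is verifying that the connecting maps in $\Hom_{\cM^k}(\cP,-)$ applied to~\eqref{eq in lemma hom functor is exact in n angles} genuinely agree with the maps of the spliced sequence of Lemma~\ref{lemma iyama's ext n} copies, rather than merely matching up abstractly term by term. The morphisms $X_{n+1}[(j-1)n]\to X_0[jn]$ in $\cM^k$ are by definition elements of $\ext^n_A(X_{n+1},X_0)$ coming from the $(n+2)$-angle structure, and one must check that postcomposition with these induces precisely the $\ext^n\to\Hom$ transition realized by the splicing. I would handle this by using the functoriality of the identification $\cP\cong\bigoplus_i G[in]$ together with the fact that the $(n+2)$-angle is the derived completion of the original exact sequence, so that the Yoneda/derived composition matches the connecting homomorphism produced by Lemma~\ref{lemma iyama's ext n}. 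Once this compatibility is established, left exactness of $\Hom_{\cM^k}(\cP,-)$ is immediate since $\Hom$ out of a generator is always left exact and the target sequence is exact at every term except possibly the last, which is exactly the assertion.
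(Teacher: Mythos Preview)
Your proposal is correct and follows essentially the same approach as the paper: identify $\Hom_{\cM^k}(\cP,X[jn])$ with $\Hom_A(G,X)\oplus\ext^n_A(G,X)$ via the decomposition $\cP=\bigoplus_i G[in]$, then recognize the resulting sequence as $k+1$ spliced copies of the exact sequence furnished by Lemma~\ref{lemma iyama's ext n}. The paper's proof is terser and simply asserts the isomorphism of sequences without isolating the connecting-map compatibility you flag; your added caution there is reasonable but not something the paper treats separately.
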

\begin{proof}
Similar to the proof of lemma \ref{lemma hom functor is exact on triangles}: by definition \ref{def the higher category M_k}, $\Hom_{\cM^k}(\cP,X[jn])=\Hom_{A}(G,X)\oplus \ext^n_{A}(G,X)$ for any $X\in\cM$ because $\cP=\bigoplus_{0\leq j\leq k} G[jn]$. If we apply the functor $\Hom_{\cM^k}(\cP,-)$ to \ref{eq in lemma hom functor is exact in n angles}, the resulting sequence is isomorphic to
\begin{gather*}
0\rightarrow \Hom_{A}(G, X_0)\rightarrow \Hom_{A}(G,X_1)\rightarrow\cdots\rightarrow\Hom_{A}(G, X_{n+1})\rightarrow \\
 \ext^n_{A}(G,X_0)\oplus \Hom_{A}(G, X_0)\rightarrow \ext^n_{A}(G,X_1)\oplus \Hom_{A}(G, X_1)\rightarrow\cdots
 \\  \ext^n_{A}(G,X_{n+1})\oplus \Hom_{A}(G, X_{n+1})\rightarrow \ext^n_{A}(G,X_{0})\oplus \Hom_{A}(G, X_{0})\rightarrow\\
\vdots\\ 
\ext^n_{A}(G,X_0)\rightarrow \ext^n_{A}(G,X_1)\rightarrow\cdots\rightarrow \ext^n_{A}(G,X_{n+1})
\end{gather*}
which is exact by lemma \ref{lemma iyama's ext n}.
\end{proof}

\begin{corollary}\label{cor to lemma hom is exact on triangles} Let $0\rightarrow M\rightarrow I_0\rightarrow I_1\rightarrow \cdots I_n\rightarrow 0$ be the injective coresolution of $M\in\cM$. Then the functor $\Hom_{\cM^k}(\cP,-)$ is left exact on the sequence 
\begin{align*}
0\rightarrow M\rightarrow I_0\rightarrow I_1\rightarrow \cdots\rightarrow I_n\rightarrow M[1]\rightarrow I_0[1]\rightarrow \cdots I_{n-1}[kn]\rightarrow I_n[kn].
\end{align*}
\end{corollary}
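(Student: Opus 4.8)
The plan is to deduce this directly from Lemma \ref{lemma hom functor is exact on n angles higher version}, in exact parallel with the way the Dynkin corollary was deduced from Lemma \ref{lemma hom functor is exact on triangles}. The only thing that needs checking is that the injective resolution of $M$ is an exact sequence \emph{all of whose terms lie in} $\cM$, since such a sequence is precisely the admissible input for that lemma.

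First I would record that, because $\gldim A\le n$, every $A$-module has injective dimension at most $n$; in particular $M$ does, so its minimal injective resolution terminates after at most $n$ cosyzygy steps and can be written as a genuine exact sequence
\begin{align*}
0\rightarrow M\rightarrow I_0\rightarrow I_1\rightarrow\cdots\rightarrow I_n\rightarrow 0
\end{align*}
in $\modd A$, padding the tail with zero objects if the injective dimension of $M$ is strictly smaller than $n$. This complex has $n+2$ terms, which matches the shape $0\to X_0\to\cdots\to X_{n+1}\to 0$ of the exact sequence required in Lemma \ref{lemma hom functor is exact on n angles higher version}.

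Next I would verify that each term lies in $\cM$. By hypothesis $M\in\cM$, and the injectives are handled by the explicit description $\cM=\add\big(\bigoplus_{j\ge 0}\tau_n^j(DA)\big)$: its $j=0$ summand is $DA$, which is the direct sum of all indecomposable injective $A$-modules, so every $I_\ell$ is a summand of a finite direct sum of copies of $DA$ and hence lies in $\add(DA)\subseteq\cM$. Thus the injective resolution above is an exact sequence in $\cM$. Setting $X_0=M$ and $X_\ell=I_{\ell-1}$ for $1\le\ell\le n+1$, the shift-periodic sequence built from it by the $(n+2)$-angle construction (shifting by $[n]$ each period) is exactly the instance of the sequence \ref{eq in lemma hom functor is exact in n angles} attached to this data, which is the displayed sequence of the corollary. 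Lemma \ref{lemma hom functor is exact on n angles higher version} then gives left exactness of $\Hom_{\cM^k}(\cP,-)$ on it, proving the claim.

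The only ingredient that genuinely uses the $n$-representation-finite, $n$-hereditary hypothesis — as opposed to being formal bookkeeping — is the containment of the injectives in $\cM$, and this is immediate from the explicit form of $\cM$; I therefore do not expect a real obstacle. The one point to keep an eye on is the degenerate case where $M$ is itself injective, so that $I_0=M$ and $I_1=\cdots=I_n=0$: here one must confirm that the padded sequence $0\to M\xrightarrow{\sim}M\to 0\to\cdots\to 0$ is still a legitimate exact sequence in $\cM$, so that the lemma applies verbatim.
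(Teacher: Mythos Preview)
Your proposal is correct and follows exactly the paper's approach: the paper's proof is the single line ``Since injective resolution of $M\in\cM$ is exact, by lemma \ref{lemma hom functor is exact on n angles higher version} claim holds.'' You have simply unpacked the implicit verification that all terms of the resolution lie in $\cM$ (via $\add(DA)\subseteq\cM$) and noted the degenerate injective case, both of which the paper leaves tacit.
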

\begin{proof}
Since the injective coresolution of $M\in\cM$ is exact, by lemma \ref{lemma hom functor is exact on n angles higher version} claim holds.
\end{proof}
\begin{remark}\label{remark serre functor on higher angulated cat}
Let $\nu$ be the Nakayama functor (remark \ref{remark serre functor on triangulated cat}). We recall that $\nu_n:=\nu\circ[-n]:D^b(\modd A)\rightarrow D^b(\modd A)$ gives autoequivalence of $D^b(\modd A)$ and satisfies
\begin{enumerate}[label=\roman*)]
\item For any $i\in\mathbb{Z}$, there is a functorial isomorphism
\begin{align*}
\Hom_{D^b(\modd A)}(X,Y[i])\cong D\Hom_{D^b(\modd A)}(Y,\nu_n(X)[n-i])
\end{align*}
\item The diagram
\begin{align*}
\xymatrix{D^b(\modd A)\ar[d]^{D}\ar[r]^{\nu_n}&D^b(\modd A)\ar[d]^D\\
D^b(\modd A^{op})\ar[r]^{\nu^{-1}_n}&D^b(\modd A^{op})}
\end{align*}
commutes \cite[Obs. 2.1]{herschend2014iyamaopperman}
\end{enumerate}
As in remark \ref{remark serre functor on triangulated cat}, $\cM[n\mathbb{Z}]\subset D^b(\modd A)$ admits an autoequivalence, the $n$-Auslander-Reiten translation $\tau_n$, given by 
\begin{align*}
\Hom_{D^b(\modd A)}(-,(\tau_n X)[n])\cong D\Hom_{D^b(\modd A)}(X,-)
\end{align*}
\end{remark}

\subsection{Auslander \& Higher Auslander Algebras}
Let $\Lambda$ be a finite dimensional artin algebra  algebra. Then, the Auslander correspondence states that any algebra $B$ satisfying
\begin{align*}
\gldim B\leq 2\leq \domdim B
\end{align*}
where $\gldim$ and $\domdim$
 stands for global dimension and dominant dimension respectively, can be obtained as $B=\End_{\Lambda}(G)$ where $\Lambda$ is of finite representation type, $\add G=\modd\Lambda$. We recall that the dominant dimension of $B$-module $A$ is the maximum integer (or $\infty$) having the property that if $0\rightarrow A\rightarrow I_0\rightarrow I_1\rightarrow\cdots I_t\rightarrow\cdots$ is the minimal injective coresolution of $A$, then $I_j$ is projective for all $j<t$ (or $\infty$). 
 
  O. Iyama introduced higher Auslander correspondence which can be summarized as: any algebra $B$ satisfying
 \begin{align*}
\gldim B\leq n+1\leq \domdim B
\end{align*}
can be obtained as $B=\End_{\Lambda}(\cM)$ where $\cM$ is $n$-cluster tilting subcategory of $\modd\Lambda$ for an algebra $\Lambda$.

\section{Endomorphism Algebra of The Category $\cS^k$}\label{section dynkin}
 Let $\modd\Lambda$ be the category of finitely generated modules of the algebra $\Lambda=\mathbb{K}Q$ for a Dynkin quiver Q. Recall that $\add\cP=\cS^k$ where
 \begin{align*}
 \cP=\bigoplus\limits_{X\in\text{Ind-}\Lambda, 0\leq j\leq k} X[j].
 \end{align*}

Let $\Gamma^k$ be the endomorphism algebra of $\cP$ over $\cS^k$. Then, the functor $\Hom_{\cS^k}(\cP,-):\cS^k\rightarrow \modd\Gamma$ induces an equivalence between $\add\cP$ and projective $\Gamma^k$-modules. So, every projective object in $\modd\Gamma^k$ is of the form $\Hom_{\cS^k}(\cP,X)$ where $X$ is summand of $\cP$. Moreover, we will show that $\Hom_{\cS^k}(\cP,X[j])$ is always a projective-injective object in $\modd\Gamma^k$. 

We modify proofs of lemmas of \cite[5.2,5.3 VI]{auslander1997representation} for our set up. 
\begin{remark} We denote $X[0]\in\cS^k$ by only $X$. Moreover, by the transparent structure of the derived category of hereditary algebras, we do not distinguish stalk complex at $X[0]$ and $X\in\modd\Lambda$ by abuse of notation.
\end{remark}

\begin{proposition}\label{prop Sk projective resolution, gldim general}
Let $Y$ be in $\modd\Gamma^k$, $k\geq 1$. Then, we have the following. \\
(a) Suppose $P_1\xrightarrow{f} P_0\rightarrow 0$ is a projective $\Gamma^k$-presentation for $Y$. 
Then, there exists  $M_1\xrightarrow{g} M_0$ in $\cS^k$ such that $\Hom_{\cS^k}(\cP,M_1)\cong P_1$, $\Hom_{\cS^k}(\cP,M_0)\cong P_0$ and $\Hom_{\cS^k}(\cP,g)\cong f$.\\
(b) $\pdim Y\leq 3k+2$.
\end{proposition}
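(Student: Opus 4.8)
Part (a) is the soft part, and I would dispose of it first. The assignment $\Hom_{\cS^k}(\cP,-)$ restricts to an equivalence from $\add\cP=\cS^k$ onto the full subcategory of projective $\Gamma^k$-modules (the usual projectivization/Yoneda argument, using precisely the hypothesis $\add\cP=\cS^k$ and $\Gamma^k=\End_{\cS^k}(\cP)$). Given the projective presentation $P_1\xrightarrow{f}P_0\to Y\to 0$, essential surjectivity onto projectives gives $M_0,M_1\in\cS^k$ with $\Hom_{\cS^k}(\cP,M_i)\cong P_i$, and fullness and faithfulness produce a unique $g\colon M_1\to M_0$ with $\Hom_{\cS^k}(\cP,g)\cong f$. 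That is (a).

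For (b) the plan is to write down an explicit projective resolution of $Y$ of length at most $3k+2$. The first move is to notice that, because $\Lambda$ is hereditary (so $\ext^{\ge 2}_\Lambda=0$), the category $\cS^k$ is a \emph{full} subcategory of $D^b(\modd\Lambda)$: the Hom-formula in Definition \ref{definition dynkin category} merely records that $\Hom_{D^b}(X[i],Y[j])=\ext^{\,j-i}_\Lambda(X,Y)$ already vanishes unless $j-i\in\{0,1\}$. Hence I may complete $g$ to a genuine triangle $M_1\xrightarrow{g}M_0\to Z\to M_1[1]$ in $D^b(\modd\Lambda)$ and apply the cohomological functor $H:=\Hom_{D^b}(\cP,-)$. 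Rotating the triangle produces the long exact sequence $\cdots\to H(Z[-1])\to H(M_1)\xrightarrow{H(g)}H(M_0)\to Y\to 0$ with $Y=\coker H(g)$; read to the left of $H(M_0)$ this is a candidate projective resolution of $Y$, with terms occurring in the periodic order $H(M_0),\,H(M_1),\,H(Z[-1]),\,H(M_0[-1]),\,H(M_1[-1]),\,H(Z[-2]),\dots$, namely $H(M_0[-m])$ in degree $3m$, $H(M_1[-m])$ in degree $3m+1$, and $H(Z[-(m+1)])$ in degree $3m+2$.

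The engine that makes all these terms projective is again heredity: every object of $D^b(\modd\Lambda)$ splits as the direct sum of its shifted cohomologies, so each $H(M_i[-m])$ and $H(Z[-m])$ decomposes into summands of the form $\Hom_{D^b}(\cP,X[\ell])$. A direct computation $\Hom_{D^b}(\cP,X[\ell])=\bigoplus_{j=0}^{k}\ext^{\,\ell-j}_\Lambda(G,X)$ shows this is $0$ for $\ell<0$, is a projective $\Gamma^k$-module for $0\le\ell\le k$, and is non-projective only at the single overflow shift $\ell=k+1$ (where it equals $\ext^1_\Lambda(G,X)$). Since $M_0,M_1\in\cS^k$ are supported in shifts $0,\dots,k$ and $Z=\mathrm{cone}(g)$ is supported in shifts $0,\dots,k+1$, every term to the left of $H(M_0)$ has all its surviving stalks in shifts $\le k$, hence is projective; the overflow shift $k+1$ appears only in $H(Z)=H(Z[0])$, which sits to the right of $Y$. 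Thus the long exact sequence is an honest projective resolution. Counting vanishing: $H(M_i[-m])=0$ for $m>k$ and $H(Z[-(m+1)])=0$ for $m>k$, so the last possibly-nonzero term is $H(Z[-(k+1)])$ in homological degree $3k+2$, giving $\pdim Y\le 3k+2$.

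The main obstacle is exactly the step just described: a priori $Z$ and its shifts are genuine two-term complexes, and $H$ applied to a complex need not land among the projectives, so without further input one cannot read off the resolution. The decomposition of every object of $D^b(\modd\Lambda)$ into shifted stalks—available only because $\Lambda$ is hereditary—is what tames this, reducing each term to a sum of the explicitly understood modules $\Hom_{D^b}(\cP,X[\ell])$ and isolating the unique non-projective shift $\ell=k+1$. The rest is bookkeeping: verifying the overflow shift never enters the left-hand terms and that the vanishing of negative-shift contributions forces termination in degree $3k+2$. As a consistency check, the case $k=0$ recovers the classical Auslander bound $\gldim\le 2$, since there $Z[-1]\cong\ker g\,\oplus\,(\coker g)[-1]$ and $H(Z[-1])=\Hom_\Lambda(G,\ker g)$, which is the syzygy appearing in \cite[VI.5.3]{auslander1997representation}.
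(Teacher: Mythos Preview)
Your proof is correct. Both arguments exploit the same underlying periodicity coming from rotating the triangle on $g$, but the packaging differs. The paper first argues that the worst case occurs when $M_1,M_0$ are concentrated in the top shift $[k]$, then reduces inductively to $\Gamma^1$ via $\pdim Y=\pdim Y'+3$ (with $Y'$ presented by $M_1[-1]\to M_0[-1]$), and finally invokes Lemma~\ref{lemma hom functor is exact on triangles} for the base case; the third object $M_2$ in the paper is exactly your $Z[-1]$. You instead work directly in $D^b(\modd\Lambda)$ for arbitrary $k$: complete $g$ to a triangle, feed it to the cohomological functor $\Hom_{D^b}(\cP,-)$, use the hereditary splitting of complexes to see that every term $H(M_i[-m])$, $H(Z[-(m+1)])$ lands in the projective range $0\le\ell\le k$, and count. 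Your route avoids the somewhat informal reduction step and the appeal to Lemma~\ref{lemma hom functor is exact on triangles}, replacing both by the single observation that the only non-projective value $H(X[k+1])$ can appear only in $H(Z[0])$, which sits to the right of $Y$ in the long exact sequence and never enters the resolution. The trade-off is that the paper's reduction makes the $3$-periodicity and the role of the base case $k=1$ more visible, while your argument is more uniform and self-contained.
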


\begin{proof}
a) Let $P_1\xrightarrow{f} P_0\rightarrow Y$ be projective presentation of $\Gamma^k$-module $Y$. Since $\Hom_{\cS^k}(\cP,-): \cS^k\rightarrow \modd\Gamma^k$ induces an equivalence between $\add\cP$ and projective $\Gamma^k$-modules, there is a morphism $g: M_1\rightarrow M_0$ in $\cS^k$ which induces $f$. \\

b) Notice that $\Hom_{\cS^k}(X[i],X[j])=0$ if $i<j-1$.
Let $M_2,M_1,M_0\in G[\leq j]$ where $G[\leq j]:=\bigoplus_{j'\leq j} G[j']$.

It is enough to show that for any $Z\in\modd\Gamma^1$, $\pdim Z\leq 5$. Because, if 
\begin{align*}
\cdots\rightarrow\Hom_{\cS^k}(\cP,M_2)\rightarrow \Hom_{\cS^k}(\cP,M_1)\rightarrow \Hom_{\cS^k}(\cP,M_0)\rightarrow Y
\end{align*}
is the projective resolution of $Y\in\modd\Gamma^k$, then we can take the sequence
\begin{align*}
M_2[-1]\rightarrow M_1[-1]\rightarrow M_0[-1]\rightarrow M_2\rightarrow M_1\rightarrow M_0
\end{align*}
in $\cS^k$, so that $\pdim Y=\pdim Y'+3$ where $\Hom_{\cS^k}(\cP,M_1[-1])\rightarrow \Hom_{\cS^k}(\cP,M_0[-1])\rightarrow Y'\rightarrow 0$ is projective presentation of $Y'$. To get an upper bound for projective dimension, it is enough to consider $M_2,M_1,M_0\in(\modd\Lambda)[k]$. In this case,  $\pdim_{\Gamma^k}Y=3(k-1)+\pdim_{\Gamma^{1}} Z$. Now we show that $\pdim_{\Gamma^1} Z\leq 5$. To get an upper bound it is enough to take $M_2,M_1,M_0\in\modd\Lambda[1]$, since the terms from $\modd\Lambda$ cannot increase the projective dimension. We get the sequence
\begin{align*}
0\rightarrow M_2[-1]\rightarrow M_1[-1]\rightarrow M_0[-1]\rightarrow M_2\rightarrow M_1\rightarrow M_0
\end{align*}
in $\cS^1$. By lemma \ref{lemma hom functor is exact on triangles}, we get $\pdim_{\Gamma^1} Z\leq 5$. Hence $\pdim Y\leq 3k+2$ for any $\Gamma^k$ module.
\end{proof}

\begin{proposition}\label{prop gamma k global dimension} Global dimension of $\Gamma^k$ is $3k+2$
\end{proposition}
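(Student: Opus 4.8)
Since Proposition~\ref{prop Sk projective resolution, gldim general}(b) already yields $\gldim\Gamma^k\le 3k+2$, the only thing left is the reverse inequality $\gldim\Gamma^k\ge 3k+2$, that is, a single $\Gamma^k$-module whose minimal projective resolution has length exactly $3k+2$. The plan is to exploit the fact that the proof of \ref{prop Sk projective resolution, gldim general}(b) does more than bound the length: for a module $Y$ whose projective presentation $\Hom_{\cS^k}(\cP,M_1)\to\Hom_{\cS^k}(\cP,M_0)\to Y\to 0$ is concentrated in the top layer $\modd\Lambda[k]$, the shifting construction there produces a module $Y'$ presented one layer lower together with the relation $\pdim_{\Gamma^k}Y=\pdim_{\Gamma^k}Y'+3$. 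Iterating this descent $k$ times carries $Y$ down to a module $\overline{Y}$ supported in the bottom layer $\modd\Lambda[0]$, where no maps enter from below, so $\overline{Y}$ is a module over the classical Auslander algebra $\Gamma^0:=\End_\Lambda(G)$ and $\pdim_{\Gamma^k}Y=3k+\pdim_{\Gamma^0}\overline{Y}$. Thus $\gldim\Gamma^k\ge 3k+\gldim\Gamma^0$, and everything reduces to the base layer.

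For the base layer, $\gldim\Gamma^0=2$ is attained precisely because $\Lambda$ is not semisimple: for an indecomposable non-projective $X$, the almost split sequence $0\to\tau X\to E\to X\to 0$ gives the minimal projective resolution $0\to\Hom_\Lambda(G,\tau X)\to\Hom_\Lambda(G,E)\to\Hom_\Lambda(G,X)\to S_X\to 0$ of the simple top $S_X$, which has length $2$ and cannot be shortened. The candidate extremal module is therefore the simple $\Gamma^k$-module $S_{X[k]}$ sitting at the top-layer vertex $X[k]$ for such an $X$: I would check that its iterated descent is exactly $S_X$ over $\Gamma^0$, so that $\pdim_{\Gamma^k}S_{X[k]}=3k+2$.

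The syzygy computations along the descent are controlled by the corollary to Lemma~\ref{lemma hom functor is exact on triangles}: applying $\Hom_{\cS^k}(\cP,-)$ to the staircase $0\to M\to I_0\to I_1\to M[1]\to I_0[1]\to\cdots\to I_1[k]\to 0$ attached to the injective copresentation of $M\in\modd\Lambda$ converts each three-term block into a genuine stretch of the projective resolution. Rather than merely bounding the length as in \ref{prop Sk projective resolution, gldim general}, here I would keep track of exactly which indecomposable summands survive in each syzygy, so as to verify that none of the three homological steps produced by a layer-descent collapses and that the bottom block indeed contributes $2$; this is what upgrades the relation $\pdim_{\Gamma^k}Y=\pdim_{\Gamma^k}Y'+3$ to an equality for the chosen module.

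The main obstacle is exactly this minimality at every stage. Proposition~\ref{prop Sk projective resolution, gldim general} needs only that each descent adds \emph{at most} $3$, whereas the lower bound requires that it adds \emph{exactly} $3$ and that the base layer contributes \emph{exactly} $2$. This hinges on choosing the indecomposable $X$ so that the connecting $\Ext^1_\Lambda$-maps between consecutive layers remain nonzero and the relevant almost split sequences stay non-split throughout the descent, and on the non-semisimplicity of $\Lambda$ that forces $\gldim\Gamma^0=2$. Equivalently, the crux is the non-vanishing of a single class in $\Ext^{3k+2}_{\Gamma^k}(S_{X[k]},S')$ for a suitable simple $S'$, which the staircase sequences compute directly.
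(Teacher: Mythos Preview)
Your plan is sound, but the paper takes a much shorter route. Rather than iterating the descent from Proposition~\ref{prop Sk projective resolution, gldim general}(b) and verifying that each of the $k$ layer-shifts contributes exactly~$3$, the paper writes down a single staircase in $\cS^k$ directly: for a simple $\Lambda$-module $S$ with non-simple projective cover $P(S)$, the short exact sequence $0\to\Omega^1(S)\to P(S)\to S\to 0$ in $\modd\Lambda$ extends to
\[
0\to\Omega^1(S)\to P(S)\to S\to\Omega^1(S)[1]\to\cdots\to S[k]\to 0
\]
in $\cS^k$. Applying $\Hom_{\cS^k}(\cP,-)$ (Lemma~\ref{lemma hom functor is exact on triangles}) yields a projective resolution of its cokernel $Y$ with $3k+3$ projective terms, and the bound $\pdim Y\ge 3k+2$ then follows from one observation: the leftmost differential is induced by the inclusion $\Omega^1(S)=\rad P(S)\hookrightarrow P(S)$, hence is a radical map, so the leftmost projective cannot be absorbed into a contractible summand of the complex. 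No step-by-step minimality tracking is required.

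Your approach would also succeed and has the conceptual merit of separating the contribution of the $k$ shifts (each worth~$3$) from the Auslander-algebra base layer (worth~$2$). The price is exactly the obstacle you name: one must argue that none of the intermediate syzygies collapses. The paper sidesteps this entirely by choosing the \emph{projective} resolution of a simple $S$ rather than an almost split sequence or an injective copresentation; with that choice, non-splitness at the far left is automatic and nothing needs to be checked in between.
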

\begin{proof}
Let $S$ be a simple $\Lambda$-module with nonsimple projective cover $P(S)$, then the sequence
\begin{align*}
0\rightarrow\Omega^1(S)\rightarrow P(S)\rightarrow S\rightarrow \Omega^1(S)[1]\rightarrow\cdots\rightarrow S[k]\rightarrow 0
\end{align*}
applied $\Hom_{\cS^k}(\cP,-)$ gives the projective resolution of $Y\in\modd\Gamma^k$, such that $\pdim Y\geq 3k+2$. Combining with Proposition \ref{prop Sk projective resolution, gldim general} gives the result.
\end{proof}

 Our aim now is to show that $\domdim\Gamma^k=3k+2$. 

\begin{proposition}\label{prop gamma k injectivity} Let $\cP$ be the additive generator for $\cS^k$. 
\begin{enumerate}[label=\arabic*)]
\item  $\Gamma^k$-modules of the form $\Hom_{\cS^k}(\cP,N)$ where $N$ is either injective $\Lambda$-module or $N=X[j]$ for some $1\leq j\leq k$ are injective.
\item A $\Gamma^k$-module is a projective-injective module if and only if it is isomorphic to $\Hom_{\cS^k}(\cP,N)$ for either some injective $\Lambda$-module $N$ or any shifted object $N=X[j]$.
\item The functor $\Hom_{\cS^k}(\cP,-):\cS^k\rightarrow\modd\Gamma^k$ induces an equivalence between the injective and shifted objects of $\cS^k$ and the category of projective-injective $\Gamma^k$-modules.
\end{enumerate}
\end{proposition}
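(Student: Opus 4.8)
The plan is to deduce all three statements from a single instance of \emph{Serre duality read inside $\cS^k$}. First I would record the structural fact that $\cS^k$ is a \emph{full} subcategory of $D^b(\modd\Lambda)$: since $\Lambda$ is hereditary, $\Hom_{D^b}(X[i],Y[j])=\ext^{\,j-i}_\Lambda(X,Y)$ vanishes unless $j-i\in\{0,1\}$, so the truncated Hom-spaces of Definition \ref{definition dynkin category} agree with the genuine derived Hom-spaces on the objects of $\cS^k$. Hence the Serre functor $\nu$ of Remark \ref{remark serre functor on triangulated cat} is available, and for any $N\in\cS^k$ Serre duality yields a natural isomorphism $\Hom_{\cS^k}(\cP,N)\cong D\Hom_{\cS^k}(\nu^{-1}N,\cP)$. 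When $\nu^{-1}N\in\add\cP=\cS^k$, the right-hand side is the $\mathbb{K}$-dual of the projective right $\Gamma^k$-module $\Hom_{\cS^k}(\nu^{-1}N,\cP)$, hence an injective left $\Gamma^k$-module; naturality in the variable $\cP$ makes the isomorphism $\Gamma^k$-linear. In this way the entire problem reduces to the combinatorial question of when $\nu^{-1}N$ stays inside the window $\cS^k$.

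For part (1) I would compute the action of $\nu^{-1}$ on objects. On indecomposables $\nu^{-1}$ commutes with the shift and restricts to the inverse Nakayama equivalence on the projective–injective boundary: it carries an injective $\Lambda$-module to a projective $\Lambda$-module in degree $0$, and carries a non-injective module $X$ to $(\tau^{-1}X)[-1]$, where $\tau^{-1}$ is the inverse Auslander--Reiten translate in $\modd\Lambda$. Thus if $N$ is an injective module in degree $0$ then $\nu^{-1}N$ is a projective module in degree $0$; if $N=X[j]$ with $1\le j\le k$ and $X$ non-injective then $\nu^{-1}N=(\tau^{-1}X)[j-1]$ lies in degree $0\le j-1\le k-1$; and if $N=I[j]$ with $I$ injective and $1\le j\le k$ then $\nu^{-1}N$ is a projective module in degree $j\le k$. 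In every case $\nu^{-1}N\in\cS^k$, so $\Hom_{\cS^k}(\cP,N)$ is injective. The decisive mechanism is that $\nu^{-1}$ lowers the degree of a non-injective module by exactly one while preserving the degree of an injective module (turning it into a projective), so it ejects $N$ from $[0,k]$ precisely for the non-injective modules concentrated in degree $0$, and for no object in the list of part (1); in particular the top degree $j=k$, where the naive use of $\nu$ would leave $\cS^k$, causes no trouble.

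Part (2) then splits as expected. The implication ``$\Leftarrow$'' is immediate: for $N$ in the stated list $N\in\add\cP$ forces $\Hom_{\cS^k}(\cP,N)$ to be projective, while part (1) makes it injective. For ``$\Rightarrow$'', every projective $\Gamma^k$-module is $\Hom_{\cS^k}(\cP,U)$ with $U\in\add\cP$, and on indecomposables $U=X[j]$ the degree computation above shows that the only case \emph{not} already covered by part (1) is $U=X$ a non-injective module in degree $0$; I must then show $\Hom_{\cS^k}(\cP,X)$ is not injective. This is the main obstacle, and it is a genuine boundary phenomenon — exactly the case where $\nu^{-1}U$ leaves $\cS^k$. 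The plan is to argue at level $0$: the module $\Hom_{\cS^k}(\cP,X)$ is concentrated at the level-$0$ vertices, since $\Hom_{\cS^k}(Y[i],X)=0$ unless $i=0$, and the idempotent subalgebra of $\Gamma^k$ supported at level $0$ is the classical Auslander algebra $\End_\Lambda(G)$. If $\Hom_{\cS^k}(\cP,X)$ were injective it would equal its injective envelope, a direct sum of indecomposable injectives $I_{W[0]}=D\Hom_{\cS^k}(W,\cP)$ indexed by its level-$0$ socle; but $I_{W[0]}$ has level-$1$ part $D\ext^1_\Lambda(W,-)$, which vanishes only when $W$ is projective. So injectivity forces $\Hom_{\cS^k}(\cP,X)$ to be a sum of the $I_{W}$ with $W$ projective, and by the classical Auslander correspondence these are $\Hom_{\cS^k}(\cP,\nu W)$ with $\nu W$ the associated injective module; faithfulness of $\Hom_{\cS^k}(\cP,-)$ then forces $X$ injective, a contradiction.

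Finally, part (3) is a repackaging. The functor $\Hom_{\cS^k}(\cP,-)$ is already an equivalence from $\add\cP=\cS^k$ onto the projective $\Gamma^k$-modules, hence fully faithful, and parts (1) and (2) identify inside $\cS^k$ the full subcategory spanned by the injective $\Lambda$-modules together with all shifted objects $X[j]$, $1\le j\le k$, as exactly the preimage of the projective-injective $\Gamma^k$-modules. Restricting the equivalence to this subcategory gives the asserted equivalence onto the category of projective-injective $\Gamma^k$-modules. I expect the boundary step in part (2) to be the only real work; everything else follows directly from Serre duality together with the fullness of $\cS^k$ in $D^b(\modd\Lambda)$.
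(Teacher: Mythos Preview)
Your proposal is correct and rests on the same core mechanism as the paper—Serre duality in $D^b(\modd\Lambda)$ restricted to the window $\cS^k$—but the two executions differ in a way worth noting.

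For part~(1), you treat all three cases (injective modules, shifted non-injectives, shifted injectives) uniformly via the single observation that $\nu^{-1}N$ stays inside $\cS^k$. The paper instead splits: for an injective $I$ in degree~$0$ it reduces $\Hom_{\cS^k}(\cP,I)$ to $\Hom_\Lambda(G,I)$ and quotes the classical Auslander-algebra result; for non-injective $X[j]$ with $j\ge 1$ it uses the Auslander--Reiten form of Serre duality exactly as you do; and for $I[j]$ with $j\ge 1$ it cannot use the $\tau$-formula (since $\tau^{-1}I=0$) and again reduces to $\Hom_\Lambda(G,I)$ via the ordinary Nakayama functor. Your uniform treatment is cleaner; the paper's version has the minor advantage of making the connection to the classical Auslander algebra explicit.

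For the harder direction of part~(2), the arguments genuinely diverge. The paper takes the $\Lambda$-injective envelope $X\hookrightarrow I$, applies $\Hom_{\cS^k}(\cP,-)$, and uses injectivity of $\Hom_{\cS^k}(\cP,X)$ to split the resulting monomorphism; reflecting the splitting back through the equivalence $\add\cP\simeq\proj\Gamma^k$ forces $X$ to be injective. Your route—observing that $\Hom_{\cS^k}(\cP,X)$ is supported purely in level~$0$, so every indecomposable injective summand $D\Hom_{\cS^k}(W,\cP)$ must have vanishing level-$1$ part $D\ext^1_\Lambda(W,G)$, forcing $W$ projective and hence $X$ injective—is also valid and arguably more illuminating about \emph{why} the obstruction sits exactly at the degree-$0$ boundary. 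The paper's splitting argument is shorter; yours makes the role of the window edge visible.
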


\begin{proof} 
1) First, we give the proof for $I$ is an injective $\Lambda$-module. By definition \ref{definition dynkin category}, $\Hom_{\cS^k}(G[j],I)=0$ for any $j\geq 1$ where $\add G:=\modd\Lambda$. Therefore,
\begin{align}\label{eq isomorphism in cokernel of auslander algebra}
\Hom_{\cS^k}(\cP,I)\cong \Hom_{\cS^k}(G,I)\cong\Hom_{\Lambda}(G,I).
\end{align}
By \cite[Lemma 5.3]{auslander1997representation}, it is an injective object.\\

As we stated in the remark \ref{remark serre functor on triangulated cat}, we have the functorial isomorphism 
\begin{align}\label{eq in derived equivalence}
\Hom_{D^b(\modd\Lambda)}(-,(\tau X)[1])\cong D\Hom_{D^b(\modd\Lambda)}(X,-)
\end{align}
\cite{happel1988triangulated}, \cite{keller2005triangulated}. If we apply \ref{eq in derived equivalence} for $1\leq j\leq k$, we get
\begin{align*}
\Hom_{\cS^k}(\cP,X[j])&\cong \Hom_{D^b(\modd\Lambda)}(\cP,X[j])\\
&\cong D\Hom_{D^b(\modd\Lambda)}(\tau^{-1}X[j-1],\cP)\\
&\cong D\Hom_{\cS^k}(\tau^{-1}X[j-1],\cP)
\end{align*}

$\tau^{-1}X[j-1]$ is projective object in $D^b(\modd\Lambda^{op})$ if and only if $\tau^{-1}X[j-1]\in\cP$. Since $j\geq 1$, its dual is injective. Notice that this argument cannot work for $I[j]$ since $\tau^{-1}I=0$. Nevertheless, we can deduce that:
\begin{align*}
\Hom_{\cS^k}(\cP,I[j])&\cong\Hom_{\Lambda}(G,I)\oplus\ext^1_{\Lambda}(G,I)\\
&\cong\Hom_{\Lambda}(G,I)
\end{align*}
since $\ext^i_{\Lambda}(G,I)=0$ for any injective object $I$. Now we can use Nakayama functor to get
\begin{align*}
\Hom_{\Lambda}(G,I)\cong D\Hom_{\Lambda}(\nu^{-1}I,G)
\end{align*}
Since $\nu^{-1}I$ is projective and summand of $G$, $\Hom_{\Lambda}(\nu^{-1}I,G)$ is projective object over $\Lambda^{op}$. Hence, its dual is injective.\\
2) First we analyze the case restricted to $\modd\Lambda$. Let $P$ be a projective-injective $\Gamma^k$ module. Since $P$ is projective, there exists $X\in\modd\Lambda$ so that $P\cong\Hom_{\Lambda}(\cP,X)$. Let $X\rightarrow I$ be $\Lambda$ injective envelope of $X$. Since $\Hom_{\Lambda}(\cP,X)$ is injective, the monomorphism $\Hom_{\Lambda}(\cP,X)\rightarrow \Hom_{\Lambda}(\cP,I)$  of $\Gamma^k$ modules splits. This means that $X\rightarrow I$ splits. Hence the monomorphism $X\rightarrow I$ is an isomorphism since it is an essential split monomorphism. Thus, we get $P\cong\Hom_{\Lambda}(\cP,I)$\\
For $P=\Hom_{\cS^k}(\cP,X[j])$, in 1) we showed that $\tau^{-1}X[j-1]$ is projective object in $D^b(\modd\Lambda^{op})$ if and only if $\tau^{-1}X[j-1]\in\cP$. As a result either $P\cong\Hom_{\cS^k}(\cP,I)$ or $P\cong\Hom_{\cS^k}(\cP,X[j])$ is projective-injective.\\
3) This is consequence of 2).
\end{proof}

\begin{lemma}\label{lemma minimal inj res DYNKIN} Let $M$ be an indecomposable non-injective $\Lambda$ module. Then 
\begin{align*}
0\rightarrow\Hom_{\cS^k}(\cP, M)\rightarrow \Hom_{\cS^k}(\cP,I_0)\xrightarrow{g}\Hom_{\cS^k}(\cP, I_1)\rightarrow \Hom_{\cS^k}(\cP,M[1])\rightarrow\cdots \Hom_{\cS^k}(\cP,I_1[k])
\end{align*}
is the minimal injective coresolution of $\Hom_{\cS^k}(\cP,M)$ in $\modd\Gamma^k$.
\end{lemma}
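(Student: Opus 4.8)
The plan is to establish three things in turn: exactness of the displayed complex, injectivity of every term to the right of $\Hom_{\cS^k}(\cP,M)$, and minimality. Since $\Lambda$ is hereditary and $M$ is indecomposable and non-injective, its minimal injective copresentation in $\modd\Lambda$ is $0\to M\to I_0\to I_1\to 0$, with $M\to I_0$ an injective envelope and $I_1=\coker(M\to I_0)$ injective. This is exactly the data that extends in $\cS^k$ to the sequence $0\to M\to I_0\to I_1\to M[1]\to\cdots\to I_1[k]\to 0$ of the corollary to Lemma \ref{lemma hom functor is exact on triangles}. Applying $F:=\Hom_{\cS^k}(\cP,-)$ and invoking that corollary gives exactness of the displayed complex, including surjectivity onto the last term $\Hom_{\cS^k}(\cP,I_1[k])$, which settles the first point.

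For injectivity I would simply read off Proposition \ref{prop gamma k injectivity}(1): the terms $\Hom_{\cS^k}(\cP,I_0)$ and $\Hom_{\cS^k}(\cP,I_1)$ are injective because $I_0,I_1$ are injective $\Lambda$-modules, and each of $\Hom_{\cS^k}(\cP,M[j])$, $\Hom_{\cS^k}(\cP,I_0[j])$, $\Hom_{\cS^k}(\cP,I_1[j])$ with $1\le j\le k$ is injective because it is the image under $F$ of a shifted object. Thus every term apart from $\Hom_{\cS^k}(\cP,M)$ is injective and the complex is an injective coresolution.

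The crux is minimality, which I would reduce to showing that each differential is a radical morphism of $\Gamma^k$-modules and that the initial map is an injective envelope. Each differential is $F$ applied to a structure map of the $\cS^k$-sequence, and these are of two types. The maps $M[j]\to I_0[j]$ and $I_0[j]\to I_1[j]$ stay within a single shift degree and are radical already in $\modd\Lambda$ because the copresentation $0\to M\to I_0\to I_1\to 0$ is minimal; the connecting maps $I_1[j]\to M[j+1]$ run between objects concentrated in different shift degrees and are automatically radical in $\cS^k$, since indecomposables in distinct degrees admit no isomorphisms between them there. All of these differentials occur between projective-injective $\Gamma^k$-modules, so by the equivalence of Proposition \ref{prop gamma k injectivity}(3) a splitting of any differential would descend to a splitting of the corresponding map in $\cS^k$; radicality in $\cS^k$ therefore forces radicality in $\modd\Gamma^k$ and excludes trivial summands in the interior of the coresolution. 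For the initial map, the morphism definition in $\cS^k$ gives $\Hom_{\cS^k}(\cP,M)\cong\Hom_{\Lambda}(G,M)$ and $\Hom_{\cS^k}(\cP,I_0)\cong\Hom_{\Lambda}(G,I_0)$, so it is the classical map $\Hom_{\Lambda}(G,M)\to\Hom_{\Lambda}(G,I_0)$, which is an injective envelope by \cite[Lemma 5.3]{auslander1997representation}. Together these give minimality.

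I expect the minimality step to be the main obstacle, and within it the delicate point is making precise that $F$ transports radical maps and injective envelopes faithfully; this is where the adaptation of \cite[Lemma 5.3]{auslander1997representation}, combined with Proposition \ref{prop gamma k injectivity}(3), carries the argument, whereas exactness and termwise injectivity follow immediately from the corollary to Lemma \ref{lemma hom functor is exact on triangles} and Proposition \ref{prop gamma k injectivity}(1).
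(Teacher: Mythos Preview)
Your proposal is correct. The exactness and injectivity steps coincide with the paper's implicit use of the corollary to Lemma~\ref{lemma hom functor is exact on triangles} and Proposition~\ref{prop gamma k injectivity}. For minimality the paper argues differently: it isolates the connecting morphisms $\coker g[j]\to \Hom_{\cS^k}(\cP,M[j+1])$ and shows by contradiction, via the dual of \cite[Cor.~2.3]{auslander1997representation}, that failure of left minimality there would violate exactness. Your route instead checks that every differential lies in the categorical radical of $\cS^k$ and transports this through the equivalence $\add\cP\simeq\proj\Gamma^k$; this handles all three types of maps uniformly and makes the within-degree minimality explicit, whereas the paper leaves the maps $M[j]\to I_0[j]$ and $I_0[j]\to I_1[j]$ tacit. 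One small sharpening: the equivalence you need is simply the Yoneda equivalence $\add\cP\simeq\proj\Gamma^k$ (as noted at the start of Section~\ref{section dynkin}), not Proposition~\ref{prop gamma k injectivity}(3); that already guarantees $F$ preserves and reflects radical maps between objects of $\add\cP$, which is what drives your minimality argument.
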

\begin{proof}
Let $0\rightarrow M\rightarrow I_0\rightarrow I_1$ be injective copresentation of $M$. Since $\Lambda$ is hereditary, it is the injective coresolution. Consider the map $\coker g[j]\xrightarrow{f[j]} \Hom_{\cS^k}(\cP,M[j+1])$. Assume to the contrary that it is not left minimal. By dual statement of \cite[Cor. 2.3]{auslander1997representation}, this is equivalent to $\im f[j]\cap Z=0$ for $\Hom_{\cS^1}(\cP,M[j+1])\cong Y\oplus Z$ in $\modd\Gamma^k$. Moreover $\im f[j]\subset\Hom_{\cS^k}(\cP,I_0[j+1])$, therefore it is induced by the embedding $M\rightarrow I_0$. Hence, If $Z\neq 0$, then the sequence is not exact which contradicts lemma \ref{cor to lemma hom is exact on triangles}.
\end{proof}

We restate Theorem \ref{THM dynkin} and give its proof.

\begin{theorem}
The algebra $\Gamma^k:=\End_{\cS^k}\left(\cP\right)$ is a higher Auslander algebra of global dimension $3k+2$ for $k\geq 1$. The opposite quiver of $\Gamma^k$  is equal to the Auslander-Reiten quiver of $\cS^k$. 
\end{theorem}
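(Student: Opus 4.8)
The plan is to assemble the three facts that characterize a higher Auslander algebra: the correct global dimension, the matching dominant dimension, and the inequality $\gldim \le n+1 \le \domdim$ forcing equality. The global dimension is already settled: Proposition~\ref{prop gamma k global dimension} gives $\gldim \Gamma^k = 3k+2$. So the substantive remaining work is to show $\domdim \Gamma^k = 3k+2$ as well, whence $\Gamma^k$ satisfies the higher Auslander condition $\gldim \Gamma^k \le (3k+1)+1 \le \domdim \Gamma^k$ with $n = 3k+1$, and is therefore the higher Auslander algebra associated to an $n$-cluster tilting subcategory.

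First I would pin down the projective-injective objects. By Proposition~\ref{prop gamma k injectivity}, the projective-injective $\Gamma^k$-modules are exactly those of the form $\Hom_{\cS^k}(\cP,N)$ where $N$ is either an injective $\Lambda$-module or any shifted object $X[j]$ with $1\le j\le k$. These are precisely the injective terms that must appear at the start of a minimal injective resolution of an arbitrary module. The strategy for the dominant dimension is then to take a module $Y\in\modd\Gamma^k$ and examine its minimal injective resolution. The model case is $Y=\Hom_{\cS^k}(\cP,M)$ for $M$ indecomposable noninjective, whose minimal injective resolution
\begin{align*}
0\rightarrow\Hom_{\cS^k}(\cP,M)\rightarrow\Hom_{\cS^k}(\cP,I_0)\rightarrow\cdots\rightarrow\Hom_{\cS^k}(\cP,I_1[k])
\end{align*}
is given by Lemma~\ref{lemma minimal inj res DYNKIN}. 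The point is that every injective term in this resolution is of the admissible form (an injective $\Lambda$-module in degree $0$, or a genuinely shifted object $X[j]$), hence projective-injective, and by counting the terms coming from the $k+1$ copies of the two-term injective copresentation $I_0\to I_1$ glued along the shifts one sees the resolution has exactly the length matching $3k+2$.

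The key step is to argue that this worst case governs the dominant dimension of the whole module category, i.e. that \emph{every} $\Gamma^k$-module has all but the final injective term in its minimal injective resolution projective-injective, and that the simple module realizing the maximal projective dimension in Proposition~\ref{prop gamma k global dimension} also realizes the bound $\domdim \le 3k+2$ sharply. Here I would dualize: since $\Hom_{\cS^k}(\cP,-)$ is left exact on the glued injective resolutions (Corollary following Lemma~\ref{lemma hom functor is exact on triangles}) and carries injective/shifted objects of $\cS^k$ to projective-injectives (Proposition~\ref{prop gamma k injectivity}(3)), the minimal injective resolution of any $\Hom_{\cS^k}(\cP,X)$ begins with at least $3k+2$ projective-injective terms, giving $\domdim \Gamma^k \ge 3k+2$. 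Combined with the general inequality $\domdim \le \gldim$ available once $\gldim$ is finite, we conclude $\domdim \Gamma^k = 3k+2 = \gldim \Gamma^k$, which is exactly the higher Auslander condition.

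The main obstacle I anticipate is the minimality and exactness bookkeeping in the glued resolution: one must verify that no spurious splittings shorten the resolution and that the dominant dimension is computed on a module attaining the sharp bound rather than on an arbitrary one. This is handled by the left-minimality argument already carried out in Lemma~\ref{lemma minimal inj res DYNKIN}, which I would invoke degree by degree across the $k+1$ glued blocks. For the final claim, that $(\Gamma^k)^{op}$ has quiver equal to the Auslander-Reiten quiver of $\cS^k$, I would observe that $\Hom_{\cS^k}(\cP,-)$ identifies $\add\cP=\cS^k$ with the projective $\Gamma^k$-modules, so the indecomposable projectives correspond to indecomposable objects of $\cS^k$ and the irreducible morphisms correspond to arrows; since the quiver of an algebra is read off from irreducible maps between indecomposable projectives, the quiver of $(\Gamma^k)^{op}$ coincides with the Auslander-Reiten quiver of $\cS^k$, which is the gluing of $k+1$ shifted copies of the AR-quiver of $\modd\Lambda$.
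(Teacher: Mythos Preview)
Your proposal is correct and follows essentially the same approach as the paper: both invoke Proposition~\ref{prop gamma k global dimension} for $\gldim\Gamma^k=3k+2$, then build the injective resolution of the projectives $\Hom_{\cS^k}(\cP,M)$ from the glued injective copresentation in $\cS^k$ and use Proposition~\ref{prop gamma k injectivity} to see that the first $3k+2$ terms are projective-injective. The only difference is cosmetic: where you invoke the general inequality $\domdim\le\gldim$ (valid for non-semisimple algebras of finite global dimension) to pin down $\domdim\Gamma^k=3k+2$, the paper instead checks directly that the cokernel of $\Hom_{\cS^k}(\cP,I_0[k])\to\Hom_{\cS^k}(\cP,I_1[k])$ is injective by comparing with the classical Auslander algebra of $\Lambda$, thereby completing the resolution explicitly.
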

\begin{proof}
In proposition \ref{prop Sk projective resolution, gldim general} we showed that $\gldim\Gamma^k=3k+2$. We need to compute its dominant dimension.\\
Let $0\rightarrow G\rightarrow I_0\xrightarrow{g} I_1\rightarrow 0$ be the injective coresolution of $\add G=\modd\Lambda$. It is exact, since $\Lambda$ is hereditary. It induces the sequence

\begin{align}
0\rightarrow G\rightarrow I_0\rightarrow I_1\rightarrow G[1]\rightarrow\cdots\rightarrow I_1[k]\rightarrow 0
\end{align}
in $\cS^k$. By corollary \ref{cor to lemma hom is exact on triangles}, and proposition \ref{prop gamma k injectivity}
\begin{align*}
0\rightarrow \Hom_{\cS^k}(\cP,G)\rightarrow \Hom_{\cS^k}(\cP,I_0)\rightarrow \Hom_{\cS^k}(\cP,I_1)\rightarrow \Hom_{\cS^k}(\cP,G[1])\rightarrow\cdots\rightarrow \Hom_{\cS^k}(\cP,I_1[k])
\end{align*}
is the injective $\Gamma^k$ resolution.
 We will show that the cokernel of the map $\Hom_{\cS^k}(\cP,I_0[k])\xrightarrow{f}\Hom_{\cS^k}(\cP,I_1[k])$ is an injective $\Gamma^k$ module. Consider the diagram
 
 \begin{align}
 \xymatrix{
 \Hom_{\cS^k}(\cP,I_0[k])\ar[d]^{\cong}\ar[r]^{f}&\Hom_{\cS^k}(\cP,I_1[k])\ar[r]\ar[d]^{\cong} &\coker f\ar@{..>}[d]\ar[r]&0\\
  \Hom_{\Lambda}(G,I_0)\ar[r]^{f'}&\Hom_{\Lambda}(G,I_1)\ar[r]& \coker f'\ar[r]& 0  }
 \end{align}
where we used \ref{eq isomorphism in cokernel of auslander algebra} for vertical isomorphisms and $f':=\Hom_{\Lambda}(G,g)$. Exactness of the first row follows from proposition \ref{prop Sk projective resolution, gldim general}, i.e., $\gldim \Gamma^k=3k+2$. Since $\coker f'$ is an injective object in Auslander algebra of $\modd\Lambda$, and using exactness, we get $\coker f$ as injective $\Gamma^k$ module. We conclude that $\domdim_{\Gamma^k}\Gamma^k=3k+2$.\\
Now we describe the quiver of $\Gamma^k$. Since $\Gamma^k=\End_{\cS^k}(\cP)$, and $\add\cP=\cS^k$, the opposite quiver of $\Gamma^k$ is equal to the Auslander-Reiten quiver of $\cS^k$. Since $\cS^k$ can be expressed as $G\oplus G[1]\oplus\cdots G[k]$ where $\add G=\modd\Lambda$, $\Gamma^k$ contains $(k+1)$ copies of Auslander-Reiten quiver of $\modd\Lambda$.
\end{proof}

\begin{remark} This is a generalization of Auslander algebras of representation finite and hereditary algebras, in the sense that $k=0$ corresponds to Auslander algebra which we started.
\end{remark}


\section{Endomorphism Algebra of The category $\cM^k$}\label{section n-rep finite}
We recall the definition of the category $\cM^k$.
\begin{definition}

Let $A$ be an $n$-representation finite and $n$-hereditary algebra where $\cM$ is the unique $n$-cluster tilting object. Consider the full subcategory of the bounded derived category $D^b(\modd A)$ denoted by $\cM^k$ whose objects are
\begin{align}
\left\{M\,\vert\, M=X[jn], 0\leq j\leq k, \forall X\in\cM\right\}.
\end{align}
and whose morphisms are
\begin{align}
\Hom_{\cM^k}(X[in],Y[jn])=\begin{cases}
\Hom_{\Lambda}(X,Y),\,\quad \text{if} \,\quad i=j \\
\ext^n_{\Lambda}(X,Y),\,\,\,\quad \text{if}\,\quad i=j-1\\
0\,\quad \text{otherwise}.
\end{cases}
\end{align}
\end{definition}
Let $\Gamma^k:=\End_{\cM^k}(\cP)$, where $\add\cP=\cM^k$. Since $\cM^k$ is full subcategory of $\cM[n\mathbb{Z}]$,  $\Gamma^k$ can be expressed as $\End_{D^b(\modd A)}(\cP)$. Let $\add G=\cM$ where $G\in\modd A$ and $G[\leq jn]$ be $\bigoplus_{1\leq j'\leq j}G[j'n]$.

\begin{proposition}\label{prop global dimension of higher algebra}
Let $\cP$ be an 
additive generator of $\cM^k$ and let $Y$ be in $\modd\Gamma^k$. Then we have the following. \\
(a) Suppose $P_1\xrightarrow{f} P_0\rightarrow 0$ is a projective $\Gamma^k$-presentation for $Y$. 
Then there exists  $M_1\xrightarrow{g} M_0$ in $\cM^k$ such that $\Hom_{\cM^k}(\cP,M_1)\cong P_1$, $\Hom_{\cM^k}(\cP,M_0)\cong P_0$ and $\Hom_{\cM^k}(\cP,g)\cong f$.\\
(b) $\pdim Y\leq (n+2)k+n+1$.
\end{proposition}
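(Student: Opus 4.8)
The plan is to mirror the proof of Proposition~\ref{prop Sk projective resolution, gldim general}, replacing the $3$-term hereditary injective resolutions used there by the fundamental $(n+2)$-term exact sequences $0\to X_0\to\cdots\to X_{n+1}\to 0$ of the $n$-cluster tilting subcategory $\cM$. All the homological input I need is already available: the morphism formula of Definition~\ref{def the higher category M_k}, the exactness Lemma~\ref{lemma hom functor is exact on n angles higher version}, and its Corollary~\ref{cor to lemma hom is exact on triangles}.

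Part (a) carries over verbatim from Proposition~\ref{prop Sk projective resolution, gldim general}(a): since $\cM^k$ is a full subcategory of $\cM[n\Zz]\subset D^b(\modd A)$ with $\add\cP=\cM^k$, the functor $\Hom_{\cM^k}(\cP,-)$ restricts to an equivalence between $\add\cP$ and the projective $\Gamma^k$-modules. Hence a projective presentation $P_1\xrightarrow{f}P_0$ of $Y$ is the image of some $g\colon M_1\to M_0$ in $\cM^k$ with $\Hom_{\cM^k}(\cP,M_i)\cong P_i$ and $\Hom_{\cM^k}(\cP,g)\cong f$.

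For part (b) I would first record, from Definition~\ref{def the higher category M_k}, the vanishing $\Hom_{\cM^k}(X[in],Y[jn])=0$ whenever $i\neq j,j-1$. This forces the lift to $\cM^k$ of a projective resolution of $Y$ to cascade downward through the levels $kn,(k-1)n,\dots,0$, dropping exactly one level at a time. As in the Dynkin case it then suffices to bound $\pdim Y$ in the extremal situation where $M_0,M_1$ lie in the top level $\cM[kn]$ and $g$ is induced by a fundamental $(n+2)$-term exact sequence of $\cM$ (for instance the injective resolution of a non-injective object, Corollary~\ref{cor to lemma hom is exact on triangles}); summands at lower levels, or of $\Hom$-type, produce shorter cascades and cannot raise the bound. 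In that extremal situation I would lift the fundamental angle to the full sequence \eqref{eq in lemma hom functor is exact in n angles},
\begin{align*}
0\to X_0\to\cdots\to X_{n+1}\to X_0[n]\to\cdots\to X_{n+1}[kn]\to 0,
\end{align*}
which has $k+1$ levels of $n+2$ objects each. By Lemma~\ref{lemma hom functor is exact on n angles higher version}, applying $\Hom_{\cM^k}(\cP,-)$ yields an exact complex whose terms $\Hom_{\cM^k}(\cP,X_i[jn])$ are all projective; it is therefore a projective resolution of the cokernel of its last map with $(k+1)(n+2)$ projective terms, of length $(k+1)(n+2)-1=(n+2)k+n+1$. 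Recursively this reads $\pdim_{\Gamma^k}Y=(n+2)(k-1)+\pdim_{\Gamma^1}Z$, one $(n+2)$-block per descent by $[-n]$, with base case $\pdim_{\Gamma^1}Z\leq 2(n+2)-1=2n+3$ from the doubled sequence, giving $(n+2)(k-1)+(2n+3)=(n+2)k+n+1$.

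Since the critical exactness statement, Lemma~\ref{lemma hom functor is exact on n angles higher version}, is already in hand, the main obstacle is the reduction step: showing that the downward cascade genuinely caps every resolution at the extremal length, so that no $\Gamma^k$-module exceeds the stated bound. This is bookkeeping patterned on Proposition~\ref{prop Sk projective resolution, gldim general}(b) and driven entirely by the morphism vanishing above. Conceptually, the reason each descending level contributes precisely an $(n+2)$-block --- and hence the reason the bound has the shape $(n+2)k+n+1$ --- is the $n$-cluster tilting hypothesis: the vanishing $\ext^i_A(\cM,\cM)=0$ for $0<i<n$ is exactly what Iyama's Lemma~\ref{lemma iyama's ext n} needs to turn each $(n+2)$-angle into a single clean exact block rather than a spread of intermediate $\ext$-terms.
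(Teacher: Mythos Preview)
Part~(a) is fine and matches the paper exactly.

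For part~(b), your overall strategy---cascade through the levels $kn,(k-1)n,\dots,0$, with each descent contributing an $(n+2)$-block---is the right picture, and your endpoint computation via the long sequence of Lemma~\ref{lemma hom functor is exact on n angles higher version} is correct. But the step you flag as ``bookkeeping patterned on Proposition~\ref{prop Sk projective resolution, gldim general}(b)'' is where the higher case genuinely differs from the Dynkin case, and the paper does not treat it as bookkeeping.

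In the hereditary setting, any morphism $M_1\to M_0$ in $\modd\Lambda$ sits in a triangle $M_2\to M_1\to M_0\to M_2[1]$, which is what allows the clean shift-down recursion. In the $n$-representation finite setting, a morphism $g'\colon M_1'\to M_0'$ with $M_1',M_0'\in\cM$ need not complete to an $(n+2)$-angle with all terms in $\cM$ for free: $\ker g'$ typically lies outside $\cM$. The paper therefore replaces your single recursive reduction by an explicit three-case analysis according to whether $M_1,M_0$ lie in the same level or adjacent levels, and in the same-level case invokes an $\cM$-approximation of $\ker g'$ (\cite[Prop.~2.3]{iyama2008auslanderreitenrevisited}) together with Jasso's $n$-exact completion axiom \cite{jasso2016n} to manufacture the $n$-exact sequence that produces the connecting map into the next level. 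These are the structural inputs that make the cascade work; the morphism vanishing of Definition~\ref{def the higher category M_k} alone does not supply them.

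So your proposal is not wrong in outline, but the reduction step needs those two ingredients made explicit. Once you have them, your argument and the paper's converge: the paper's case~(2) gives $\pdim Y\leq (n+2)j$ for the adjacent-level case, and case~(3) reduces the same-level case to case~(1), which is the base $\pdim Y\leq n+1$ over $\cM$ itself---exactly your recursion with base $\pdim_{\Gamma^0}Z\leq n+1$.
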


\begin{proof}
(a) Since $\Hom_{\cM^k}(\cP,-)$ induces an equivalence between $\add\cP$ and projective $\Gamma^k$ modules, there is a morphism $M_1\xrightarrow{g} M_0$ in $\cM^k$ such that the induced morphism $\Hom_{\cM^k}(\cP,g)$ is isomorphic to $f$.\\
(b) There are three possibilities we analyze.
\begin{enumerate}
[label=\arabic*)]
\item If $M_1,M_0\in\cM$, then $\ker g$ has an approximation by \cite[Prop 2.3]{iyama2008auslanderreitenrevisited}. Hence $\pdim Y\leq n+1$.
\item If $M_1\in \cM[jn]$ and $M_0\in \cM[(j+1)n]$, then there exists an $n$-exact sequence
\begin{align*}
0\rightarrow M'_0\rightarrow \cdots \rightarrow M'_1\rightarrow 0
\end{align*} 
in the sense of \cite{jasso2016n}
which induces the $(n+2)$-angle
\begin{align*}
M'_0\rightarrow \cdots \rightarrow M'_1\xrightarrow{g} M'_0[n]
\end{align*} where $M'_1[jn]=M_1$, $M'_0[jn]=M_0$.
Moreover all left rotations upto $[kn]$ are in $\cM^k$. If we apply $\Hom_{\cM^k}(\cP,-)$, we get $\pdim Y\leq (n+2)(j-1)+n+2$ for any $1\leq j\leq k$ by lemma \ref{lemma hom functor is exact on n angles higher version} since the summands of $M_1$ and $M_0$ which belong to $G[\leq jn]$ cannot increase the projective dimension.
\item If $M_1,M_0\in \cM[jn]$, then there exists morphism $M'_1\xrightarrow{g'} M'_0$ where $M'_1[jn]=M_1$, $M'_0[jn]=M_0$,$g'[jn]=g$. There is an approximation of $\ker g'$ in $\cM$, i.e.
\begin{align*}
0\rightarrow N_{n+1}\rightarrow N_n\rightarrow\cdots\rightarrow\ker g'
\end{align*}
where $N_{n+1}\rightarrow N_n$ is monomorphism. By \cite[Axiom 3]{jasso2016n}, there exists $n$-exact sequence
\begin{align*}
0\rightarrow N_{n+1}\rightarrow N_n\rightarrow\cdots\rightarrow N_0\rightarrow 0
\end{align*}
in $\cM$, which induces morphism $N_0\rightarrow N_{n+1}[n]$. Therefore $\pdim Y\leq (n+2)(j-1)+n+1+\pdim_{A}\ker g'$. By part 1), claim holds. We use the same argument that the summands of $M_1,M_0$ which belong to $G[\leq (j-1)n]$ cannot increase the projective dimension.
\end{enumerate}
\end{proof}

\begin{proposition}\label{prop projective-injective modules Higher} Let $\cP$ be the additive generator of $\cM^k$. 
\begin{enumerate}[label=\arabic*)]
\item  $\Gamma^k$-modules of the form $\Hom_{\cM^k}(\cP,N)$ where $N$ is either injective $\cM$-module or $N=X[jn]$ for some $1\leq j\leq k$ is injective.
\item A $\Gamma^k$ module is a projective-injective module if and only if it is isomorphic to $\Hom_{\cM^k}(\cP,N)$ for either some injective $\cM$-module $N$ or any shifted object $N=X[jn]$.
\item The functor $\Hom_{\cM^k}(\cP,-):\cM^k\rightarrow\modd\Gamma^k$ induces an equivalence between the injective and shifted by multiples of n objects of $\cM^k$ and the category of projective-injective $\Gamma^k$-modules.
\end{enumerate}
\end{proposition}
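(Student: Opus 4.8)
The plan is to follow the proof of Proposition~\ref{prop gamma k injectivity} essentially verbatim, substituting the higher Auslander--Reiten translate $\tau_n$ and the twisted Nakayama functor $\nu_n=\nu\circ[-n]$ for $\tau$ and $\nu$, and using that $\cM[n\Zz]$ is an $(n+2)$-angulated category with Serre functor supplied by Remark~\ref{remark serre functor on higher angulated cat}. Throughout I would use that the truncated morphism spaces of Definition~\ref{def the higher category M_k} agree with the ambient derived Hom-spaces: for $X[in],Y[jn]\in\cM^k$ one has $\Hom_{D^b(\modd A)}(X[in],Y[jn])=\ext^{(j-i)n}_A(X,Y)$, and since $\gldim A\le n$ this vanishes unless $j-i\in\{0,1\}$, in which cases it equals $\Hom_A(X,Y)$ and $\ext^n_A(X,Y)$ respectively. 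Hence $\Hom_{\cM^k}(\cP,-)$ and $\Hom_{\cM^k}(-,\cP)$ coincide with the corresponding derived functors on objects of $\cM^k$, so the $n$-angulated Serre duality may be applied directly.

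For part~1) with $N=X[jn]$, $1\le j\le k$, I would write $X[jn]=(\tau_n Z)[n]$ with $Z=(\tau_n^{-1}X)[(j-1)n]$ and invoke the $n$-Auslander--Reiten formula of Remark~\ref{remark serre functor on higher angulated cat} to obtain $\Hom_{\cM^k}(\cP,X[jn])\cong D\Hom_{\cM^k}(Z,\cP)$. The crucial point is that $Z$ again lies in $\cM^k$: comparing the two displayed isomorphisms of Remark~\ref{remark serre functor on higher angulated cat} gives $\tau_n\cong\nu_n$ on $\cM[n\Zz]$, so $\tau_n^{-1}=\nu^{-1}[n]$; thus for $X\in\cM$ non-injective one has $\tau_n^{-1}X\in\cM$ and $Z\in\cM[(j-1)n]$ with $0\le j-1\le k-1$, while for $X=I$ injective one has $\tau_n^{-1}I=(\nu^{-1}I)[n]=P[n]$ with $P=\nu^{-1}I$ a projective in $\cM$, so that $Z=P[jn]\in\cM[jn]$ with $1\le j\le k$. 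In either case $Z\in\add\cP$, hence $\Hom_{\cM^k}(Z,\cP)$ is a projective right $\Gamma^k$-module and its $\mathbb{K}$-dual $\Hom_{\cM^k}(\cP,X[jn])$ is an injective left $\Gamma^k$-module. For $N=I$ an injective $\cM$-module, Definition~\ref{def the higher category M_k} gives $\Hom_{\cM^k}(G[jn],I)=0$ for $j\ge 1$, so $\Hom_{\cM^k}(\cP,I)\cong\Hom_A(G,I)$, which is injective by the higher Auslander correspondence \cite{iyama2007auslandercorrespondence} (the $n$-analogue of \cite[Lemma~5.3]{auslander1997representation}); equivalently this is the $j=0$ instance of the computation above, giving $\Hom_{\cM^k}(\cP,I)\cong D\Hom_{\cM^k}(\nu^{-1}I,\cP)$ with $\nu^{-1}I$ a projective of $\cM$ in degree $0$.

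For the non-trivial direction of part~2), let $Q$ be projective--injective; since it is projective, $Q\cong\Hom_{\cM^k}(\cP,M)$ for some $M\in\cM^k$, and I would split $M=M_0\oplus M_{>0}$ into its degree-$0$ part and its part in positive shift degrees. The summand $\Hom_{\cM^k}(\cP,M_{>0})$ is projective--injective by part~1). For $M_0\in\cM$, choose its injective envelope $M_0\hookrightarrow I$ in $\modd A$ (with $I\in\cM$ since $DA\subseteq\cM$); injectivity of the summand $\Hom_{\cM^k}(\cP,M_0)$ forces the induced monomorphism $\Hom_{\cM^k}(\cP,M_0)\to\Hom_{\cM^k}(\cP,I)$ to split, whence $M_0\hookrightarrow I$ splits, and being essential it is an isomorphism, so $M_0$ is injective. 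The converse direction of part~2) is part~1) together with the projectivity of $\Hom_{\cM^k}(\cP,N)$ for $N\in\add\cP$, and part~3) follows formally from parts~1) and 2) and the full faithfulness of $\Hom_{\cM^k}(\cP,-)$ on $\add\cP=\cM^k$.

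The main obstacle is the second paragraph: one must verify that $\tau_n^{-1}$ carries each $X[(j-1)n]$ back into $\cM^k$, which relies on identifying $\tau_n$ with $\nu_n$, on the behaviour $\tau_n^{-1}(DA)=A[n]$ sending injectives to shifted projectives, and on the $n$-representation finiteness hypothesis guaranteeing that both the projectives and the injectives of $A$ lie in $\cM$. Once this shift bookkeeping is settled, and once the identification of the $\cM^k$-morphism spaces with the derived ones legitimizes the use of the $(n+2)$-angulated Serre duality, the remaining steps are purely formal, exactly as in the Dynkin case.
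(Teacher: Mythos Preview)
Your proposal is correct and follows essentially the same strategy as the paper, which packages part~1) into two auxiliary lemmas (Lemmas~\ref{lemma HIGHER injective object via injective} and~\ref{lemma HIGHER injective via shifts}) treating the injective and non-injective cases separately, and then deduces parts~2) and~3) by reference to \cite[Lemma~4.1]{iyama2011clusterMAINpaper} and Lemma~\ref{lemma HIGHER injective via shifts}.

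The one organisational difference worth noting is your treatment of $I[jn]$ for $j\ge 1$. The paper's Lemma~\ref{lemma HIGHER injective object via injective} observes that $\Hom_{\cM^k}(\cP,I[jn])\cong\Hom_A(G,I)$ as vector spaces and then asserts injectivity, which leaves the $\Gamma^k$-module structure somewhat implicit. Your route---absorbing this case into the Serre duality computation by writing $\tau_n^{-1}(I[(j-1)n])=(\nu^{-1}I)[jn]=P[jn]\in\add\cP$---makes the injectivity over $\Gamma^k$ transparent, since $D\Hom_{\cM^k}(P[jn],\cP)$ is manifestly the $\mathbb K$-dual of a projective right $\Gamma^k$-module. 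Likewise, your argument for the ``only if'' direction of part~2) via the injective envelope is exactly what the paper spells out in the Dynkin case (Proposition~\ref{prop gamma k injectivity}) but only gestures at here; supplying it explicitly is an improvement in exposition rather than a change of method.
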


\begin{proof}
1) This follows from lemmas \ref{lemma HIGHER injective object via injective} and \ref{lemma HIGHER injective via shifts} below.\\
2) If $P$ is of the form $\Hom_{\cM^k}(\cP,I)$ where $I\in \modd A$ is injective, then by \cite[Lemma 4.1]{iyama2011clusterMAINpaper}, it follows. If $P$ is isomorphic to $\Hom_{\cM^k}(\cM,X[jn])$, then it follows from lemma \ref{lemma HIGHER injective via shifts}.\\
3) This is consequence of 2).
\end{proof}

\begin{lemma}\label{lemma HIGHER injective object via injective} $\Hom_{\cM^k}(\cP,I[jn])$ for any $0\leq j\leq k$ is injective $\Gamma^k$ module.
\end{lemma}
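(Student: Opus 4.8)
The plan is to exhibit $\Hom_{\cM^k}(\cP,I[jn])$ as the $\mathbb{K}$-dual of a projective $\Gamma^k$-module, by pushing the injective object $I$ through the Serre functor so that a summand of $\cP$ reappears in the \emph{second} argument of a $\Hom$-space. This is the $n$-dimensional analogue of the Nakayama-functor step used for injectives in the Dynkin case (Proposition \ref{prop gamma k injectivity}), but carried out internally to $\cM[n\mathbb{Z}]$ rather than by changing algebras.

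First I would record two preliminary reductions. Since $I$ is injective and $\gldim A\le n$, we have $\ext^n_A(G,I)=0$, so by Definition \ref{def the higher category M_k}
\[
\Hom_{\cM^k}(\cP,I[jn])\;\cong\;\Hom_A(G,I)\oplus\ext^n_A(G,I)\;\cong\;\Hom_A(G,I)
\]
for every $0\le j\le k$; in particular this module is concentrated in the $j$-th block. Next, because $A$ is $n$-representation finite, its $n$-cluster tilting module $\cM$ is a generator--cogenerator, so every projective $A$-module lies in $\cM=\add G$. As $I$ is injective, $\nu^{-1}I$ is the corresponding projective $A$-module and the derived inverse Nakayama functor introduces no homological shift on an injective (since $DI$ is a projective $A^{\mathrm{op}}$-module). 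Hence $\nu^{-1}(I[jn])=(\nu^{-1}I)[jn]$ is a summand of $\cP$.

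Now I would invoke the Serre functor of $D^b(\modd A)$, namely $\nu=\nu_n[n]$ from Remark \ref{remark serre functor on higher angulated cat}, in the form $\Hom_{D^b(\modd A)}(X,Y)\cong D\Hom_{D^b(\modd A)}(Y,\nu X)$. Applying this with $X=\nu^{-1}(I[jn])$ and $Y=\cP$, and using $DD=\mathrm{id}$, yields a natural isomorphism
\[
\Hom_{\cM^k}(\cP,I[jn])\;\cong\;D\,\Hom_{\cM^k}\bigl((\nu^{-1}I)[jn],\,\cP\bigr),
\]
where I have replaced the ambient $\Hom_{D^b(\modd A)}$-spaces by $\Hom_{\cM^k}$-spaces: all objects involved lie in $\cM^k\subset\cM[n\mathbb{Z}]$, and the only surviving graded pieces of $\Hom_{D^b(\modd A)}((\nu^{-1}I)[jn],\cP)$ are the degree-$0$ part $\Hom_A(\nu^{-1}I,G)$ and the degree-$n$ part $\ext^n_A(\nu^{-1}I,G)$, the latter vanishing because $\nu^{-1}I$ is projective. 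Since $(\nu^{-1}I)[jn]\in\add\cP$, the contravariant functor $\Hom_{\cM^k}(-,\cP)$ sends it to a projective module over the opposite algebra, and the $\mathbb{K}$-dual of such a module is an injective $\Gamma^k$-module. Therefore $\Hom_{\cM^k}(\cP,I[jn])$ is injective, as claimed.

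The two points that require care are the following. First, the displayed isomorphism must be an isomorphism of $\Gamma^k$-modules, not merely of vector spaces: the $\Gamma^k=\End_{\cM^k}(\cP)$-action on both sides is carried by the fixed object $\cP$, and it is precisely the naturality of Serre duality in that variable that makes the isomorphism $\Gamma^k$-linear. Second, one must verify that $\nu^{-1}(I[jn])$ genuinely lands in $\add\cP$ --- both that $\nu^{-1}I$ is projective, hence in $\cM$, and that the derived functor creates no shift --- which is exactly where $n$-representation finiteness and the injectivity of $I$ are used. Everything else is the routine block bookkeeping already carried out in Lemma \ref{lemma hom functor is exact on n angles higher version}, and I expect the naturality/module-structure verification to be the only genuinely delicate step.
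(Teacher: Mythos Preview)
Your proof is correct and, in fact, more uniform than the paper's. The paper argues in two pieces: for $j=0$ it observes $\Hom_{\cM^k}(\cP,I)\cong\Hom_A(G,I)$ and quotes \cite[Lemma~4.1]{iyama2011clusterMAINpaper} to conclude injectivity; for $j\geq 1$ it again reduces to $\Hom_A(G,I)$ via the vanishing $\ext^n_A(G,I)=0$ and the block structure of $\cM^k$, and concludes by identifying with the $j=0$ case. Your route bypasses the external citation entirely: you apply Serre duality in $D^b(\modd A)$ with $\cP$ kept in the argument, obtaining $\Hom_{\cM^k}(\cP,I[jn])\cong D\Hom_{\cM^k}((\nu^{-1}I)[jn],\cP)$, and then note that $(\nu^{-1}I)[jn]\in\add\cP$ makes the right-hand side the dual of a projective right $\Gamma^k$-module. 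This is exactly the argument the paper deploys in the Dynkin setting for shifted injectives (Proposition~\ref{prop gamma k injectivity}) and for non-injective shifts here (Lemma~\ref{lemma HIGHER injective via shifts}), so you have simply recognised that the Serre-duality step works for injectives too, because $\nu^{-1}$ lands back in $\add\cP$ without needing $\tau_n^{-1}$.

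What your approach buys is uniformity over all $j$ and independence from Iyama's lemma; what the paper's approach buys is a slightly shorter reduction once that lemma is available. Your care about the $\Gamma^k$-linearity of the Serre isomorphism (naturality in the $\cP$ variable) is well placed and is the point most likely to be elided in a quick write-up.
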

\begin{proof}
Let $\add G=\cM$. If $j=0$, then $\Hom_{\cM^k}(\cP,I)\cong\Hom_{A}(G,I)$ is injective by \cite[Lemma 4.1]{iyama2011clusterMAINpaper}. For $j\geq 1$, we have
\begin{align*}
\Hom_{\cM^k}(\cP,I[jn])&\cong \Hom_{\cM^k}(G[jn],I[jn])\\
&\cong\Hom_{A}(G,I)\oplus\ext^n_{A}(G,I)\\
&\cong\Hom_{A}(G,I)
\end{align*}
is injective where we used the facts that $\ext^n_{A}(G,I)=0$ by injectivity of $I$ and $\Hom_{\cM^k}(G[in],I[jn])=0$ for any $i<j-1$ by definition \ref{def the higher category M_k}.
\end{proof}

\begin{lemma}\label{lemma HIGHER injective via shifts} Let $X\in\cM$ be non-injective module. Then, $\Hom_{\cM^k}(\cP,X[jn])$ is injective $\Gamma^k$ module for $1\leq j\leq k$.
\end{lemma}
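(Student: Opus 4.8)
The plan is to mirror the Dynkin argument of Proposition~\ref{prop gamma k injectivity}(1), replacing the classical Auslander--Reiten formula by its higher analogue recorded in Remark~\ref{remark serre functor on higher angulated cat}. Since each indecomposable summand of a non-injective $X$ is either injective (covered by Lemma~\ref{lemma HIGHER injective object via injective}) or indecomposable non-injective, and $\Hom_{\cM^k}(\cP,-)$ is additive, I may assume $X$ is indecomposable non-injective. The first step is to observe that $\cM^k$ is genuinely a full subcategory of $\cM[n\mathbb Z]\subset D^b(\modd A)$: the $n$-rigidity of $\cM$ together with $\gldim A\le n$ gives $\Hom_{D^b(\modd A)}(\cM,\cM[i])=0$ for $0<i<n$ and for $i>n$, so the Hom spaces prescribed in Definition~\ref{def the higher category M_k} coincide with those of $D^b(\modd A)$. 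Consequently
\[
\Hom_{\cM^k}(\cP,X[jn])\cong\Hom_{D^b(\modd A)}(\cP,X[jn]).
\]

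Next I would apply the $n$-Auslander--Reiten formula $\Hom_{D^b(\modd A)}(-,(\tau_n W)[n])\cong D\Hom_{D^b(\modd A)}(W,-)$ of Remark~\ref{remark serre functor on higher angulated cat} with $W=\tau_n^{-1}X[(j-1)n]$. Since $\tau_n$ is an autoequivalence commuting with the shift, $(\tau_n W)[n]=X[jn]$, and evaluating the resulting functorial isomorphism at $\cP$ yields
\[
\Hom_{\cM^k}(\cP,X[jn])\cong D\Hom_{D^b(\modd A)}\bigl(\tau_n^{-1}X[(j-1)n],\,\cP\bigr).
\]

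The decisive point is to show that $\tau_n^{-1}X[(j-1)n]$ again belongs to $\cM^k=\add\cP$. Because $X$ is indecomposable non-injective and $\cM=\add\bigl(\bigoplus_{\ell\ge0}\tau_n^{\ell}(DA)\bigr)$, the module $X$ is a summand of $\tau_n^{\ell}(DA)$ for some $\ell\ge1$ (the value $\ell=0$ is excluded precisely by non-injectivity, since $\tau_n^{0}(DA)=DA$ is the injective cogenerator). Hence $\tau_n^{-1}X$ is a summand of $\tau_n^{\ell-1}(DA)\in\cM$, so $\tau_n^{-1}X\in\cM$; combined with $0\le j-1\le k$ this places $\tau_n^{-1}X[(j-1)n]$ in $\cM^k$. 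Therefore $\Hom_{D^b(\modd A)}(\tau_n^{-1}X[(j-1)n],\cP)=\Hom_{\cM^k}(\tau_n^{-1}X[(j-1)n],\cP)$ is a projective right $\Gamma^k$-module, its $\mathbb K$-dual is an injective left $\Gamma^k$-module, and by the displayed isomorphism so is $\Hom_{\cM^k}(\cP,X[jn])$.

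I expect the main obstacle to be this third step, i.e.\ ensuring $\tau_n^{-1}X\in\cM$: this is exactly where non-injectivity is essential, and it is the reason the injective case must be handled separately in Lemma~\ref{lemma HIGHER injective object via injective} (there $\tau_n^{-1}$ would push the object to level $\ell=-1$ and leave $\cM$). A secondary point requiring care is that $\tau_n$ is the $n$-Auslander--Reiten translation on the $n$-cluster-tilting subcategory $\cM[n\mathbb Z]$, so the cancellation $(\tau_n W)[n]=X[jn]$ and the Serre-type duality must be read inside $\cM[n\mathbb Z]$ rather than in arbitrary degrees of $D^b(\modd A)$.
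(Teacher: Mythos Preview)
Your proposal is correct and follows essentially the same route as the paper's proof: apply the higher Auslander--Reiten/Serre duality of Remark~\ref{remark serre functor on higher angulated cat} to obtain $\Hom_{\cM^k}(\cP,X[jn])\cong D\Hom_{\cM^k}(\tau_n^{-1}X[(j-1)n],\cP)$, then use $\tau_n^{-1}X[(j-1)n]\in\add\cP$ to conclude the right-hand side is the $\mathbb K$-dual of a projective $\Gamma^k$-module. Your write-up is in fact more explicit than the paper's, which simply asserts $\tau_n^{-1}X[(j-1)n]\in\cP$ without spelling out the $\tau_n$-orbit argument or the reduction to indecomposables.
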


\begin{proof}

By remark \ref{remark serre functor on higher angulated cat}, there is a functorial isomorphism
\begin{align}
\Hom_{D^b(\modd A)}(-,(\tau_n X)[n])\cong D\Hom_{D^b(\modd A)}(X,-)
\end{align}

Hence, we get
\begin{align*}
\Hom_{\cM^1}(\cP,X[jn])&\cong \Hom_{D^b(\modd A)}(\cP,X[jn])\\
&\cong D\Hom_{D^b(\modd A)}((\tau^{-1}_n X)[(j-1)n],\cP)\\
&\cong D\Hom_{\cM^k}((\tau^{-1}_n X)[(j-1)n],\cP)
\end{align*}
Since $\tau_n^{-1}X[(j-1)n]\in\cP$, it is projective object in $D^b(\modd\Lambda^{op})$. Therefore, its dual is injective.
\end{proof}

\begin{proposition}\label{prop higher algebra is higher auslander computation for domdim} The algebra $\Gamma^k=\End_{\cM^k}(\cP)$ is a higher Auslander algebra of global dimension $(n+2)k+n+1$.
\end{proposition}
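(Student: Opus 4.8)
The plan is to establish the two numerical invariants of $\Gamma^k$ that characterize higher Auslander algebras, namely that
\begin{align*}
\gldim \Gamma^k = (n+2)k+n+1 = \domdim \Gamma^k,
\end{align*}
exactly mirroring the Dynkin case treated in Theorem \ref{THM dynkin}. The global dimension bound $\gldim\Gamma^k \leq (n+2)k+n+1$ is already in hand from Proposition \ref{prop global dimension of higher algebra}(b), so the first task is to produce a single $\Gamma^k$-module realizing this bound, thereby forcing equality. For this I would imitate the proof of Proposition \ref{prop gamma k global dimension}: take a simple $A$-module $S$ whose projective cover is nonsimple (equivalently, take an object of $\cM$ admitting a nontrivial injective resolution of length $n$), splice together the long $(n+2)$-angulated sequence obtained by dropping negative shifts and shifts beyond $kn$, and apply $\Hom_{\cM^k}(\cP,-)$ using Lemma \ref{lemma hom functor is exact on n angles higher version}. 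Counting the terms of the resulting projective resolution gives $\pdim \geq (n+2)k+n+1$, matching the upper bound.

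The heart of the argument is the dominant dimension computation. First I would build the long minimal injective resolution of $\Hom_{\cM^k}(\cP,G)$ in $\modd\Gamma^k$ from the injective resolution $0\to G\to I_0\to\cdots\to I_n\to 0$ of the additive generator $G$ of $\cM$. Splicing this with its shifts yields the sequence in $\cM^k$ appearing in Corollary \ref{cor to lemma hom is exact on triangles}, and applying $\Hom_{\cM^k}(\cP,-)$ produces an exact sequence (by that corollary) whose terms are all of the form $\Hom_{\cM^k}(\cP, I_i[jn])$ or $\Hom_{\cM^k}(\cP, X[jn])$. By Proposition \ref{prop projective-injective modules Higher}, every such term is a projective-injective $\Gamma^k$-module. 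Thus $\Gamma^k$ itself — being $\Hom_{\cM^k}(\cP,\cP)$, a sum of such $G[jn]$-terms — admits an injective resolution that is projective for the first $(n+2)k+n+1$ steps, giving $\domdim\Gamma^k \geq (n+2)k+n+1$.

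To pin down the exact value rather than merely a lower bound, I would examine the cokernel of the final map $\Hom_{\cM^k}(\cP, I_{n-1}[kn]) \to \Hom_{\cM^k}(\cP, I_n[kn])$ in the resolution, exactly as in the Dynkin proof of Theorem \ref{THM dynkin}. Using the isomorphism $\Hom_{\cM^k}(\cP, I[kn]) \cong \Hom_A(G,I)$ from Lemma \ref{lemma HIGHER injective object via injective}, this cokernel is identified with the corresponding cokernel computed inside the ordinary higher Auslander algebra $\End_A(G)$ of the $n$-cluster tilting module $\cM$, which is injective by Iyama's higher Auslander correspondence \cite{iyama2011clusterMAINpaper}. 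A commutative-diagram chase transfers injectivity across the isomorphism, so the cokernel is an injective $\Gamma^k$-module and the dominant dimension equals exactly $(n+2)k+n+1$. Since $\gldim\Gamma^k = (n+2)k+n+1 = \domdim\Gamma^k \geq \gldim\Gamma^k - 1$, the algebra $\Gamma^k$ satisfies Iyama's higher Auslander condition, completing the proof.

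I expect the main obstacle to be verifying that the injective resolution of $\Hom_{\cM^k}(\cP,G)$ one constructs is genuinely \emph{minimal}, so that its length correctly computes the dominant dimension; this requires the higher analogue of the left-minimality argument in Lemma \ref{lemma minimal inj res DYNKIN}, invoking the $(n+2)$-angulated structure of $\cM[n\mathbb{Z}]$ together with Lemma \ref{lemma iyama's ext n} to guarantee that the connecting maps factor through injective envelopes without redundant injective summands.
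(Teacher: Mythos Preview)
Your proposal is correct and follows essentially the same route as the paper's proof: produce a $\Gamma^k$-module of projective dimension $(n+2)k+n+1$ from an exact sequence in $\cM$ spliced along shifts, then build a projective-injective coresolution of each $\Hom_{\cM^k}(\cP,M)$ from the injective resolution of $M$ via Corollary \ref{cor to lemma hom is exact on triangles} and Proposition \ref{prop projective-injective modules Higher}, and finally identify the terminal cokernel with an injective module of the ordinary higher Auslander algebra $\End_A(G)$ through the isomorphism $\Hom_{\cM^k}(\cP,I[kn])\cong\Hom_A(G,I)$.

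One clarification: your worry about minimality is unnecessary, and the paper accordingly does not address it. For the lower bound $\domdim\Gamma^k \geq (n+2)k+n+1$ it suffices to exhibit \emph{any} injective resolution whose first $(n+2)k+n+1$ terms are projective-injective, because the minimal injective resolution is a termwise direct summand of an arbitrary one, and summands of projective-injectives remain projective-injective. Since the higher Auslander condition only asks for $\gldim\Gamma^k \leq (n+2)k+n+1 \leq \domdim\Gamma^k$, this lower bound is all that is needed; the exact equality (and hence the higher analogue of Lemma \ref{lemma minimal inj res DYNKIN}) is not required for the statement as formulated.
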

\begin{proof}
First, we compute global dimension of $\Gamma^k$. Let 
\begin{align}
0\rightarrow M_{n+1}\rightarrow\cdots\rightarrow M_1\rightarrow M_0\rightarrow 0
\end{align}
be an exact sequence in $\cM$. It induces the sequence 
\begin{align}
0\rightarrow M_{n+1}\rightarrow\cdots\rightarrow M_0\rightarrow M_{n+1}[n]\rightarrow \cdots M_0[kn]\rightarrow 0
\end{align}
in $\cM^k$. If we apply $\Hom_{\cM^k}(\cP,-)$, gives the projective resolution of some $\Gamma^k$ module $Y$, such that $\pdim Y\geq (n+2)k+n+1$. Combining with proposition \ref{prop global dimension of higher algebra} gives the result.\\
For any non-injective object $M\in\cM$, consider its injective resolution
\begin{align}
0\rightarrow M\rightarrow I_0\rightarrow I_1\rightarrow\cdots\rightarrow I_{n-1}\xrightarrow{g} I_n\rightarrow 0
\end{align}
$\Hom_{\cM^1}(\cP,-)$ is exact on it by  \ref{lemma hom functor is exact on n angles higher version}. All objects are projective-injective except the first term.
Consider the cokernel of the map $f$ where
\begin{align*}
\Hom_{\cM^k}(\cP,I_{n-1}[kn])\xrightarrow{f}\Hom_{\cM^k}(\cP,I_n[kn]).
\end{align*}
Since global dimension of $\Gamma^k$ is $(n+2)k+n+1$, we get right exact sequence
\begin{align*}
\Hom_{\cM^k}(\cP,I_{n-1}[kn])\xrightarrow{f}\Hom_{\cM^k}(\cP,I_n[kn])\rightarrow \coker f\rightarrow 0.
\end{align*}
On the other hand, $\Hom_{\cM^k}(\cP,I_i[kn])\cong\Hom_{A}(G,I_i)\oplus\ext^n_{A}(G,I_i)=\Hom_{A}(G,I_i)$ since $\ext^n(G,I_i)=0$ by injectivity of $I_i$, we get the diagram

\begin{align*}
\xymatrix{
\Hom_{\cM^k}(\cP,I_{n-1}[kn])\ar[d]^{\cong}\ar[r]^{f}&\Hom_{\cM^k}(\cP,I_n[kn])\ar[r] \ar[d]^{\cong}&\coker f\ar@{..>}[d]\ar[r] &0\\
\Hom_{A}(G,I_{n-1})\ar[r]^{f}&\Hom_{A}(G,I_n)\ar[r]& \coker f'\ar[r] &0}
\end{align*}
where $\coker f'$ is injective in $\modd\End_{A}(G)$. Diagram commutes, hence $\coker f$ is injective in $\modd\Gamma^k$.
Therefore, for any $M\in\cM$, non-injective object, dominant dimension of $\Hom_{\cM^k}(\cP,M)$ is at least $(n+2)k+n+1$. This shows $\Gamma^k$ is a higher Auslander algebra.
\end{proof}

We restate Theorem \ref{THM higher hereditary} and give a proof.

\begin{theorem}
The algebra $\Gamma^k:=\End_{\cM^k}\left(\cP\right)$ is a higher Auslander algebra of global dimension $(n+2)k+n+1$, where $\add\cP=\cM^k$. The opposite quiver of $\Gamma^k$  is equal to the Auslander-Reiten quiver of $\cM^k$ in $\cM[n\mathbb{Z}]\subset D^b(\modd A)$.
\end{theorem}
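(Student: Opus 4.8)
The plan is to assemble the restated theorem from the homological estimates already proved and then to read off the quiver from the general theory of endomorphism algebras of additive generators. The essential point is that Proposition \ref{prop higher algebra is higher auslander computation for domdim} has already done the hard analytic work: it establishes both the global dimension and the dominant dimension of $\Gamma^k=\End_{\cM^k}(\cP)$. So the role of the theorem's proof is to state the conclusion cleanly and to add the combinatorial description of the quiver.

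First I would record the two homological bounds. The upper bound $\pdim Y\le (n+2)k+n+1$ for every $Y\in\modd\Gamma^k$ is Proposition \ref{prop global dimension of higher algebra}(b), while the matching lower bound comes from the projective resolution of a suitable $\Gamma^k$-module induced by a full long exact sequence in $\cM^k$, as in the proof of Proposition \ref{prop higher algebra is higher auslander computation for domdim}; together these give $\gldim\Gamma^k=(n+2)k+n+1$. For the dominant dimension I would invoke the same proposition: taking a noninjective $M\in\cM$, applying $\Hom_{\cM^k}(\cP,-)$ to its injective resolution (exact by Corollary \ref{cor to lemma hom is exact on triangles}), and comparing $\coker f$ with the injective cokernel $\coker f'$ over $\End_A(G)$ through the commuting diagram there, one gets $\domdim\Gamma^k\ge(n+2)k+n+1=\gldim\Gamma^k$. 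Hence $\Gamma^k$ satisfies the higher Auslander inequalities and is a higher Auslander algebra of the stated global dimension.

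For the quiver assertion the approach is the standard one for additive generators of Krull--Schmidt categories. Since $A$ is $n$-representation finite, $\cM$ has only finitely many indecomposables, so $\cM^k$ is a Krull--Schmidt category with finitely many indecomposable objects and thus admits an Auslander--Reiten quiver. Because $\add\cP=\cM^k$ and $\Gamma^k=\End_{\cM^k}(\cP)$, the functor $\Hom_{\cM^k}(\cP,-)$ identifies indecomposable summands of $\cP$ with indecomposable projective $\Gamma^k$-modules and sends irreducible morphisms of $\cM^k$ to the arrows of the Gabriel quiver read on the opposite side; this yields that the opposite quiver of $\Gamma^k$ coincides with the Auslander--Reiten quiver of $\cM^k$ inside $\cM[n\mathbb{Z}]\subset D^b(\modd A)$.

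Finally I would make the shape explicit: writing $\cP=\bigoplus_{0\le j\le k}G[jn]$ with $\add G=\cM$, the indecomposables of $\cM^k$ split into $k+1$ copies of those of $\cM$, one per shift $[jn]$, so the quiver of $(\Gamma^k)^{op}$ is obtained by gluing $k+1$ copies of the Auslander--Reiten quiver of $\cM$ along the degree-raising morphisms $\ext^n_A(X,Y)=\Hom_{\cM^k}(X[jn],Y[(j+1)n])$. The hard part here is not the homological estimate, which the cited propositions supply, but checking that these connecting $\Ext^n$-morphisms furnish precisely the irreducible maps between consecutive copies; I expect this to follow from the $(n+2)$-angulated structure of $\cM[n\mathbb{Z}]$ together with the exactness recorded in Lemma \ref{lemma hom functor is exact on n angles higher version}, exactly paralleling the Dynkin argument behind Theorem \ref{THM dynkin}.
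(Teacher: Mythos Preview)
Your proposal is correct and follows essentially the same route as the paper: invoke Proposition~\ref{prop higher algebra is higher auslander computation for domdim} (which in turn rests on Proposition~\ref{prop global dimension of higher algebra} and Corollary~\ref{cor to lemma hom is exact on triangles}) for the inequalities $\gldim\Gamma^k\le(n+2)k+n+1\le\domdim\Gamma^k$, and then observe that since $\add\cP=\cM^k$ the opposite Gabriel quiver of $\Gamma^k$ is the Auslander--Reiten quiver of $\cM^k$, with the decomposition $\cP=\bigoplus_{0\le j\le k}G[jn]$ producing $k+1$ glued copies of the quiver of $\cM$. Your write-up is in fact more detailed than the paper's own proof, which simply cites Proposition~\ref{prop higher algebra is higher auslander computation for domdim} and the definition of $\Gamma^k$; your final remark about verifying that the $\Ext^n$ connecting maps give exactly the irreducible morphisms between shifts is a fair caveat, though neither you nor the paper carry this out explicitly.
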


\begin{proof}
By proposition \ref{prop higher algebra is higher auslander computation for domdim}, $\gldim\Gamma^k\leq (n+2)k+n+1\leq \domdim\Gamma^k$ which shows it is a higher Auslander algebra. 
Now we prove the second statement. By definition $\Gamma^k=\End_{\cM^k}(\cP)$ where $\add\cP=\cM^k$, the opposite quiver of $\Gamma^k$ is simply the Auslander-Reiten quiver of $\cM^k$. Since $\cM^k$ can be expressed as $G\oplus G[n]\oplus\cdots G[kn]$ where $\add G=\cM$, $\Gamma^k$ contains $(k+1)$ copies of Auslander-Reiten quiver of $\cM$.
\end{proof}

\begin{remark} This is a generalization of higher Auslander algebras obtained as endomorphism algebras of $n$-cluster tilting object of $n$-representation finite and $n$-hereditary algebras, in the sense that $k=0$ corresponds to the higher Auslander algebra which we started.
\end{remark}

\section{$\Sigma^k$ is $d$-representation finite}\label{section endomorphism algebras of gamma}
We divide this section into two subsections. First we consider Dynkin quiver case and give proof of Theorem \ref{THM sigma k algebra} 1). In \ref{subsection sigma higher}, we present proof of Theorem \ref{THM sigma k algebra} 2).

\subsection{Dynkin Case}

 Let $Aus(Q)$ be the rank of Auslander algebra of $\mathbb{K}Q$ where rank of $Q$ is $n$.
Let $\Sigma^k=\End_{\Gamma^k}(\cQ)$, $\add G=\modd\Lambda$, $\add\cP=\cS^k$, $\cP=\bigoplus_{0\leq j\leq k}G[j]$.

\begin{lemma}\label{lemma endo of sigma DYNKIN}
The algebra $\Sigma^k$ is isomorphic to $\End_{\cS^k}(D\Lambda\oplus \bigoplus_{1\leq j\leq k}G[j])$.
\end{lemma}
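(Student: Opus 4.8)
The goal is to identify the algebra $\Sigma^k=\End_{\Gamma^k}(\cQ)$, where $\cQ$ is the projective-injective object of $\Gamma^k$, with the endomorphism algebra of a specific object inside the category $\cS^k$. The plan is to first pin down exactly what $\cQ$ is, and then transport the computation of its endomorphism algebra from $\modd\Gamma^k$ back into $\cS^k$ via the defining equivalence.

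First I would describe $\cQ$ explicitly. By Proposition \ref{prop gamma k injectivity}(2), the projective-injective $\Gamma^k$-modules are precisely those of the form $\Hom_{\cS^k}(\cP,N)$ where either $N=I$ is an injective $\Lambda$-module (contributing the summand $D\Lambda$, the additive generator of the injectives in $\modd\Lambda$), or $N=X[j]$ is any shifted object with $1\leq j\leq k$ (contributing $\bigoplus_{1\leq j\leq k}G[j]$). Hence the object of $\cS^k$ whose image under $\Hom_{\cS^k}(\cP,-)$ is the basic projective-injective $\cQ$ is exactly
\begin{align*}
Q:=D\Lambda\oplus\bigoplus_{1\leq j\leq k}G[j]\in\cS^k.
\end{align*}
So $\cQ\cong\Hom_{\cS^k}(\cP,Q)$ as a $\Gamma^k$-module.

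The key step is then the equivalence furnished by Proposition \ref{prop gamma k injectivity}(3): the functor $\Hom_{\cS^k}(\cP,-)$ restricts to an equivalence between the full subcategory of $\cS^k$ consisting of the injective and shifted objects (i.e. $\add Q$) and the category of projective-injective $\Gamma^k$-modules (i.e. $\add\cQ$). Since a fully faithful functor induces isomorphisms on Hom-spaces and respects composition, it induces an isomorphism of endomorphism algebras
\begin{align*}
\Sigma^k=\End_{\Gamma^k}(\cQ)\cong\End_{\cS^k}(Q)=\End_{\cS^k}\Bigl(D\Lambda\oplus\bigoplus_{1\leq j\leq k}G[j]\Bigr),
\end{align*}
which is precisely the claim. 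I would phrase this as: an equivalence of additive categories $\add Q\xrightarrow{\ \sim\ }\add\cQ$ sends the generator $Q$ to $\cQ$ and therefore identifies their endomorphism rings.

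The main obstacle, and the only point requiring genuine care, is the identification of $\cQ$ in part one: one must be sure that the list in Proposition \ref{prop gamma k injectivity}(2) is not over- or under-counting summands. In particular, for $j\geq 1$ the objects $X[j]$ are all projective-injective (every shifted summand qualifies), whereas at level $j=0$ only the injective $\Lambda$-modules $I$ qualify, so the $j=0$ contribution is $D\Lambda$ rather than all of $G$. This asymmetry between level $0$ and the higher levels is exactly what produces the stated form $D\Lambda\oplus\bigoplus_{1\leq j\leq k}G[j]$, and I would make sure to state it cleanly rather than writing $\bigoplus_{0\leq j\leq k}$ by reflex. Once $\cQ$ is correctly identified, the rest is the formal transfer of endomorphism algebras along the established equivalence and requires no further computation.
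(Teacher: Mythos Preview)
Your proposal is correct and matches the paper's argument: both identify $\cQ\cong\Hom_{\cS^k}(\cP,\,D\Lambda\oplus\bigoplus_{1\leq j\leq k}G[j])$ via Proposition~\ref{prop gamma k injectivity} and then transfer the endomorphism computation back to $\cS^k$. The only cosmetic difference is that the paper invokes Yoneda's lemma (i.e.\ the full faithfulness of $\Hom_{\cS^k}(\cP,-)$ on $\add\cP$) directly, whereas you cite the restricted equivalence of Proposition~\ref{prop gamma k injectivity}(3); both are the same mechanism.
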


\begin{proof}
In proposition \ref{prop gamma k injectivity}, we showed that any projective-injective $\Gamma^k$ module is of the form $\Hom_{\cS^k}(\cP,I)$ or $\Hom_{\cS^k}(\cP,X[j])$ where $I\in\modd\Lambda$ is injective and $1\leq j\leq k$. Therefore $\cQ=\Hom_{\cS^k}(\cP,D\Lambda\oplus\bigoplus_{1\leq j\leq k}G[j] )$.
We have 
\begin{align*}
\Sigma^k&=\End_{\Gamma^k}(\cQ)\cong\Hom_{\Gamma^k}(\cQ,\cQ)\\
&\cong \Hom_{\Gamma^k}\left(\Hom_{\cS^k}(\cP,D\Lambda\oplus\bigoplus_{1\leq j\leq k}G[j] ),\Hom_{\cS^k}(\cP,D\Lambda\oplus\bigoplus_{1\leq j\leq k}G[j] )\right)\\
&\cong \Hom_{\cS^k}(D\Lambda\oplus\bigoplus_{1\leq j\leq k}G[j],D\Lambda\oplus\bigoplus_{1\leq j\leq k}G[j])\\
&=\End_{\cS^k}(D\Lambda\oplus\bigoplus_{1\leq j\leq k}G[j])
\end{align*}
by Yoneda's lemma.
\end{proof}

\begin{remark}\label{remark when sigma k is projective-injective} We give characterization when $\Hom_{\cS^k}(D\Lambda \oplus \bigoplus_{1\leq j\leq k}G[j], Z)$ is projective non-injective $\Sigma^k$ module. 
Let $Z\in\modd\Lambda$ be a non-injective object. Then by remark \ref{remark serre functor on triangulated cat}, we have
\begin{align*}
\Hom_{\cS^k}(D\Lambda \oplus \bigoplus_{1\leq j\leq k}G[j], Z)&\cong\Hom_{D^b(\modd\Lambda)}(D\Lambda \oplus \bigoplus_{1\leq j\leq k}G[j], Z)\\
&\cong D\Hom_{D^b(\modd\Lambda)}(\tau^{-1}Z[-1],D\Lambda \oplus \bigoplus_{1\leq j\leq k}G[j])\\
&\cong D\Hom_{\cS^k}(\tau^{-1}Z[-1],D\Lambda \oplus \bigoplus_{1\leq j\leq k}G[j])
\end{align*}
Therefore $\Hom_{\cS^k}(D\Lambda \oplus \bigoplus_{1\leq j\leq k}G[j], Z)$ is injective if and only if $\tau^{-1}Z[-1]\in D\Lambda \oplus \bigoplus_{1\leq j\leq k}G[j]) $. This implies, either $Z=X[j]$ for $j\geq 2$ or $Z=X[1]$ with $\tau^{-1}X\in D\Lambda$. \\
Similarly, for an injective $I\in\modd\Lambda$, the functorial isomorphism becomes
\begin{align*}
\Hom_{\cS^k}(D\Lambda \oplus \bigoplus_{1\leq j\leq k}G[j], I)&\cong\Hom_{\Lambda}(D\Lambda, I)\\
&\cong D\Hom_{\Lambda}(\nu^{-1}I,D\Lambda)
\end{align*}
Therefore $\Hom_{\Lambda}(\nu^{-1}I,D\Lambda)$ is projective over $\modd\Lambda^{op}$ if and only if $\nu^{-1}I\in D\Lambda$. 
\end{remark}
\begin{lemma}\label{lemma number of proj non injective DYKIN} The number of projective non-injective $\Sigma^k$ modules is $Aus(Q)-n$.
\end{lemma}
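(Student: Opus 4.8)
The claim is that the number of projective non-injective $\Sigma^k$ modules equals $\mathrm{Aus}(Q) - n$, where $\mathrm{Aus}(Q)$ is the rank of the Auslander algebra of $\mathbb{K}Q$ and $n$ is the rank of $Q$. Let me think about what these quantities count and how to match them up.

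The rank of the Auslander algebra $\mathrm{Aus}(Q)$ is the number of indecomposable $\Lambda$-modules, i.e. $|\mathrm{Ind}\,\Lambda|$, since the Auslander algebra is $\mathrm{End}_\Lambda(G)$ with $\mathrm{add}\,G = \mathrm{mod}\,\Lambda$. The quantity $n$ is the number of vertices of $Q$, which equals the number of indecomposable projective (equivalently, simple, equivalently indecomposable injective) $\Lambda$-modules.

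**The counting strategy.**

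I would proceed as follows. By Lemma 4.11, $\Sigma^k \cong \mathrm{End}_{\cS^k}(D\Lambda \oplus \bigoplus_{1 \leq j \leq k} G[j])$. The indecomposable projective $\Sigma^k$-modules are in bijection with the indecomposable summands of $D\Lambda \oplus \bigoplus_{1 \leq j \leq k} G[j]$, namely the indecomposable injectives in degree $0$ together with all shifted indecomposables $X[j]$ for $1 \leq j \leq k$. The plan is to count, among these, exactly which are non-injective as $\Sigma^k$-modules, using the injectivity criterion established in Remark 4.12.

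**Using the injectivity criterion from Remark 4.12.**

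Remark 4.12 gives the precise test: for $Z \in \mathrm{mod}\,\Lambda$ non-injective, $\Hom_{\cS^k}(D\Lambda \oplus \bigoplus_{1\leq j\leq k} G[j], Z)$ is injective iff $\tau^{-1}Z[-1] \in D\Lambda \oplus \bigoplus_{1 \leq j \leq k}G[j]$, which happens exactly when $Z = X[j]$ with $j \geq 2$, or $Z = X[1]$ with $\tau^{-1}X \in D\Lambda$ (i.e. $X$ is such that $\tau^{-1}X$ is injective). For the degree-$0$ summands $I = D\Lambda$, the criterion is $\nu^{-1}I \in D\Lambda$, i.e. $\nu^{-1}I$ is injective. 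I would now go summand by summand. The projective summands living in shift degrees $j \geq 2$ are all injective, so they contribute nothing to the count. The summands to scrutinize are those in degree $0$ (the injectives $I$) and those in degree $1$ (the shifts $X[1]$). The projective-non-injective summands are therefore exactly: the injectives $I$ with $\nu^{-1}I$ \emph{not} injective, together with the shifts $X[1]$ for which $\tau^{-1}X$ is \emph{not} injective (equivalently $\tau^{-1}X \neq 0$ and $\tau^{-1}X$ non-injective, but care is needed when $\tau^{-1}X = 0$).

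**Assembling the count and identifying the obstacle.**

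The main work — and the step I expect to be the genuine obstacle — is the bookkeeping that shows these two families of exceptions together number exactly $n$, so that subtracting them from the total $\mathrm{Aus}(Q)$ gives $\mathrm{Aus}(Q) - n$. I would argue that the Nakayama-functor condition ``$\nu^{-1}I$ injective'' for injectives $I$ picks out precisely the injectives that are also projective-injective, i.e. those $I$ with $\nu^{-1}I$ an injective projective; more cleanly, since $\nu^{-1}$ is a bijection from injectives to projectives on a hereditary algebra, one tracks how the conditions interact across the AR-quiver via $\tau^{-1} = \nu^{-1}[1]$ on the derived level. The cleanest route is to observe that $\tau^{-1}$ (as a stable translate) together with $\nu^{-1}$ partition $\mathrm{Ind}\,\Lambda$ so that the excluded (injective-$\Sigma^k$) summands are exactly those indecomposables $Y$ with $\tau^{-1}Y$ injective-or-zero; the complementary set — the non-injective $\Sigma^k$-projectives — then correspond bijectively to indecomposables $Y$ with $\tau^{-1}Y$ non-injective and nonzero, and a direct count on the AR-quiver of a Dynkin type shows the excluded set has cardinality equal to the number $n$ of indecomposable injectives. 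I would make this rigorous by exhibiting an explicit bijection between the projective-non-injective $\Sigma^k$ summands and $\mathrm{Ind}\,\Lambda$ minus the $n$ injective indecomposables, so that the count is forced to be $\mathrm{Aus}(Q) - n$.
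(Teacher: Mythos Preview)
Your approach is essentially the paper's: invoke the injectivity criterion of the preceding remark, dispose of all summands in shift degree $\geq 2$ as projective--injective, and reduce to counting among the $n + \mathrm{Aus}(Q)$ summands in degrees $0$ and $1$. The paper's proof is just as terse on the final count, appealing to the $\tau$-orbit structure in one sentence; your promised bijection to $\mathrm{Ind}\,\Lambda$ minus the injectives is correct and is realized concretely by $Z \mapsto \tau^{-1}Z[-1] = \nu^{-1}Z$, which sends the projective non-injective summands of $\mathcal{G}$ bijectively onto the non-injective indecomposables of $\Lambda$ sitting in degree $0$, giving the count $\mathrm{Aus}(Q) - n$ immediately.
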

\begin{proof}
By remark \ref{remark when sigma k is projective-injective}, any $\Hom_{D^b(\modd\Lambda)}(D\Lambda \oplus \bigoplus_{1\leq j\leq k}G[j],X[j])$, $j\geq 2$ is projective-injective $\Sigma^k$ module. For the remaining objects, the closure of $\tau^{-i}X$ orbit where $i\geq 2$ cannot belong to $D\Lambda$, hence there are $Aus(Q)-n$ many projective non-injective $\Sigma^k$ modules. By duality, it is the same number of injective non-projective $\Sigma^k$ modules.
\end{proof}

 We summarize the results below.

\begin{proposition} We have the following cardinalities.
\begin{enumerate}[label=\arabic*)]
\item The rank of $\Gamma^k$ is $(k+1).Aus(Q)$
\item The rank of $\Sigma^k$ is $k.Aus(Q)+n$
\item The number of projective-injective objects of $\modd\Gamma^k$ is the rank of $\Sigma^k$.
\item The number of projective-injective objects of $\Sigma^k$ is $(k-1)Aus(Q)+2n$.
\item The number of injective but not projective $\Gamma^k$ modules is $Aus(Q)-n$.
\item The number of injective but not projective $\Sigma^k$ modules is $Aus(Q)-n$.
\end{enumerate}
\end{proposition}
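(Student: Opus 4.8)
The plan is to turn every item into a count of indecomposable summands of an explicit generator, so that each cardinality is either an immediate consequence of a structural result already proved or a one-line complementary subtraction. Write $p$ for the number of indecomposable projective-injective $\Lambda$-modules, and recall $Aus(Q)=\lvert\operatorname{Ind}\Lambda\rvert$ while $n$ is the rank of $Q$, equivalently the number of indecomposable injective (equally, projective) $\Lambda$-modules.

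Items (1)--(3) are bookkeeping. Since $\Gamma^k=\End_{\cS^k}(\cP)$ with $\add\cP=\cS^k$, the rank of $\Gamma^k$ is the number of indecomposable summands of $\cP=\bigoplus_{X\in\operatorname{Ind}\Lambda,\,0\le j\le k}X[j]$, namely $(k+1)Aus(Q)$, giving (1). By Proposition \ref{prop gamma k injectivity} the indecomposable projective-injective $\Gamma^k$-modules are exactly the $\Hom_{\cS^k}(\cP,I)$ with $I$ indecomposable injective ($n$ of them) together with the $\Hom_{\cS^k}(\cP,X[j])$ with $X\in\operatorname{Ind}\Lambda$ and $1\le j\le k$ ($k\,Aus(Q)$ of them), so there are $k\,Aus(Q)+n$ of them. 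As $\cQ$ is their basic direct sum and $\Sigma^k=\End_{\Gamma^k}(\cQ)$, the rank of $\Sigma^k$ is the number of indecomposable summands of $\cQ$; this is (3), and plugging in the count yields (2), $\rank\Sigma^k=k\,Aus(Q)+n$.

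Item (5) is a complementary count: $\Gamma^k$ has exactly $\rank\Gamma^k=(k+1)Aus(Q)$ indecomposable injectives, of which $k\,Aus(Q)+n$ are projective by (2)/(3), so the injective non-projective ones number $(k+1)Aus(Q)-(k\,Aus(Q)+n)=Aus(Q)-n$. Item (6) will drop out in the same way once (4) is in hand, namely $\rank\Sigma^k-\big((k-1)Aus(Q)+2n\big)=Aus(Q)-n$; alternatively it is exactly the dual statement recorded in Lemma \ref{lemma number of proj non injective DYKIN}.

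The real content is (4). Using Lemma \ref{lemma endo of sigma DYNKIN} I replace $\Sigma^k$ by $\End_{\cS^k}(\cQ')$ with $\cQ'=D\Lambda\oplus\bigoplus_{1\le j\le k}G[j]$, so the indecomposable projective $\Sigma^k$-modules are the $\Hom_{\cS^k}(\cQ',Z)$ for $Z$ an indecomposable summand of $\cQ'$. Running the Serre-duality argument of Remark \ref{remark serre functor on triangulated cat} exactly as in Remark \ref{remark when sigma k is projective injective} gives $\Hom_{\cS^k}(\cQ',Z)\cong D\Hom_{\cS^k}(\tau^{-1}Z[-1],\cQ')$, so such a projective is injective precisely when $\tau^{-1}Z[-1]\in\add\cQ'$. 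I would then compute $\tau^{-1}Z[-1]$ in $D^b(\modd\Lambda)$, using $\tau^{-1}=\nu^{-1}[1]$ and $\nu^{-1}I=$ projective for injective $I$, over the three families of summands: for $Z=X[j]$ with $2\le j\le k$ one gets a shift living in degrees $1,\dots,k$, hence always in $\add\cQ'$, accounting for all $(k-1)Aus(Q)$ of them; for $Z=X[1]$ one gets $\tau^{-1}X$, which lies in $\add\cQ'$ for the $n$ injective $X$ (as $\tau^{-1}X=P[1]$ sits in degree $1$) and for the $n-p$ non-injective $X$ whose ordinary translate $\tau^{-1}X$ is injective, i.e. $2n-p$ in all; and for $Z=I$ injective one gets $\nu^{-1}I$ in degree $0$, which lies in $\add\cQ'$ iff $\nu^{-1}I$ is injective, i.e. for the $p$ projective-injectives. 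Summing, $(k-1)Aus(Q)+(2n-p)+p=(k-1)Aus(Q)+2n$, which is (4). The only delicate step, and the crux of the whole proposition, is the degree-$0$ and degree-$1$ boundary bookkeeping via the derived Nakayama functor: it is precisely there that the dependence on the unknown quantity $p$ enters two of the families and cancels, leaving the clean formula; once this cancellation is seen, (6) follows by the subtraction above.
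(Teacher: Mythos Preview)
Your proof is correct and follows the same route as the paper: items (1)--(3) and (5) are the same complementary counts from Proposition~\ref{prop gamma k injectivity}, and item (4) is obtained via the Serre-duality criterion of Remark~\ref{remark when sigma k is projective injective}, after which (6) drops out by subtraction (equivalently by Lemma~\ref{lemma number of proj non injective DYKIN}). The only difference is organisational: the paper deduces (4) from Lemma~\ref{lemma number of proj non injective DYKIN} by subtracting the projective non-injectives from $\rank\Sigma^k$, whereas you count the projective-injectives directly; your explicit case analysis with the auxiliary parameter $p$ (the number of projective-injective $\Lambda$-modules) and the observation that $p$ cancels is in fact more transparent than the paper's somewhat terse orbit argument in the proof of Lemma~\ref{lemma number of proj non injective DYKIN}.
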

\begin{proof}
\begin{enumerate}[label=\arabic*)]
\item Since $\cS^k$ has $(k+1)Aus(Q)$ indecomposable objects, claim follows.
\item The rank of $\Sigma^k$ is the number of projective-injective objects of $\Gamma^k$. Every projective object in the subquiver $\Gamma^{k-1}$ is projective-injective. The subquiver of the Auslander algebra part has $n$ projective-injective objects. Therefore $n+kAus(Q)$ is the rank of $\Sigma^k$.
\item Since $\Sigma^k$ is the endomorphism algebra of projective-injective objects of $\Gamma^k$, The rank is the number of nonisomorphic projective-injective $\Gamma^k$-modules
\item By remark \ref{remark when sigma k is projective-injective} and lemma \ref{lemma number of proj non injective DYKIN}
\item The injective but not projective objects lie in the Auslander algebra part. There are n projective-injective objects there. Hence in total, there are $Aus(Q)-n$ many injective nonprojective objects of $\modd\Gamma^k$.
\item By remark \ref{remark when sigma k is projective-injective} and lemma \ref{lemma number of proj non injective DYKIN}
\end{enumerate}
\end{proof}

\begin{theorem}\label{thm sigma is d rep DYNKIN}
Let $d=3k+1$. Then, $\Sigma^k$ is $d$-representation finite algebra.
\end{theorem}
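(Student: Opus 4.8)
The plan is to establish, for $\Sigma^k$ with $d=3k+1$, the two defining properties of a $d$-representation-finite algebra: that $\gldim \Sigma^k = d$, and that $\Sigma^k$ admits a $d$-cluster-tilting module. The second property will be inherited almost for free from the already-proved fact that $\Gamma^k$ is a higher Auslander algebra; the first is where the specific geometry of $\cS^k$ inside $D^b(\modd\Lambda)$ must be used, and it is the genuine obstacle.

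First I would produce the $d$-cluster-tilting module. By Lemma \ref{lemma endo of sigma DYNKIN} we have $\Sigma^k\cong\End_{\cS^k}(W)$ with $W=D\Lambda\oplus\bigoplus_{1\leq j\leq k}G[j]$, while $\Gamma^k=\End_{\cS^k}(\cP)$ satisfies $\gldim\Gamma^k=\domdim\Gamma^k=d+1=3k+2$ by Theorem \ref{THM dynkin} and Proposition \ref{prop gamma k global dimension}. Since $\cQ$ is precisely the projective-injective part of $\Gamma^k$ by Proposition \ref{prop gamma k injectivity}, the higher Auslander correspondence of \cite{iyama2007auslandercorrespondence} applies verbatim: $\Gamma^k=\End_{\Sigma^k}(M)$ for a $d$-cluster-tilting subcategory $\add M\subset\modd\Sigma^k$, with the base recovered as $\Sigma^k=\End_{\Gamma^k}(\cQ)$. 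As $\Gamma^k$ is finite dimensional, $\add M$ has an additive generator, so $M$ is a genuine $d$-cluster-tilting \emph{module} over $\Sigma^k$. To match the $d$-hereditary refinement one must still identify $M$ with $\bigoplus_j\tau_d^j(D\Sigma^k)$, and this is exactly where the characterization of projective-injective versus projective-non-injective $\Sigma^k$-modules in Remark \ref{remark when sigma k is projective injective} and Lemma \ref{lemma number of proj non injective DYKIN}, together with the fundamental-domain description of the $k$-cluster category, pin down the $\tau_d$-orbit shape.

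The crux is to show $\gldim\Sigma^k=3k+1$. I would compute this exactly as in Propositions \ref{prop Sk projective resolution, gldim general}--\ref{prop gamma k global dimension}, but for the generator $W$ in place of $\cP$: every projective $\Sigma^k$-module is $\Hom_{\cS^k}(W,-)$ of a summand of $W$, and a projective resolution of a $\Sigma^k$-module is obtained by applying $\Hom_{\cS^k}(W,-)$ to a sequence in $\cS^k$ assembled from injective copresentations and their shifts, which are exact because $\Lambda$ is hereditary, invoking the left-exactness of Lemma \ref{lemma hom functor is exact on triangles}. The point that lowers the global dimension by exactly one relative to $\Gamma^k$ is that $W$ contains at degree $0$ only the injective $\Lambda$-modules $D\Lambda$, not all of $\modd\Lambda$; those degree-$0$ summands contribute projective-injective $\Sigma^k$-modules that truncate the top of the longest resolution. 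I would exhibit one $\Sigma^k$-module attaining $\pdim=3k+1$ (the analogue of the simple with non-simple projective cover used in Proposition \ref{prop gamma k global dimension}) and bound all others above by $3k+1$ via the same ``shift by $[-1]$, add three to the projective dimension'' reduction to a base case, where the bound drops by one compared with the $\Gamma^k$ computation. The hereditary hypothesis on $\Lambda$ is essential here: finiteness of $\gldim\Sigma^k$ cannot be deduced from the higher Auslander property of $\Gamma^k$ alone, since the base of a higher Auslander algebra may have infinite global dimension.

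Finally I would assemble the pieces: with $\gldim\Sigma^k=d=3k+1$ and a $d$-cluster-tilting module $M$ in hand, $\Sigma^k$ is $d$-representation-finite by the characterization of such algebras in \cite{iyama2011clusterMAINpaper}, and the $\tau_d$-orbit description of $M$ then yields uniqueness and the $d$-hereditary strengthening. The main difficulty I anticipate is the global-dimension step, and specifically verifying that deleting the non-injective degree-$0$ objects lowers the maximal projective dimension by \emph{exactly} one while creating no longer resolution elsewhere; the bookkeeping of which $\Hom_{\cS^k}(W,-)$-images are projective-injective, governed by Remark \ref{remark when sigma k is projective injective}, is what controls this.
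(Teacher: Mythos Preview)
Your proposal is correct and follows essentially the same approach as the paper: obtain a $d$-cluster-tilting object for $\Sigma^k$ from the higher Auslander correspondence applied to $\Gamma^k$, compute $\gldim\Sigma^k=3k+1$ (the paper's Proposition \ref{prop global dimension of sigma k DYNKIN}), and invoke \cite[Theorem 1.6]{iyama2011clusterMAINpaper}. The paper makes the counting argument identifying the cluster-tilting object with $\add(\Sigma^k\oplus D\Sigma^k)$ more explicit than you do, while your sketch of the global-dimension upper bound is slightly more detailed than the paper's, but these are differences of emphasis rather than substance.
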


\begin{proof}
By proposition \ref{prop gamma k global dimension}, $\Sigma^k$ has a $d$-cluster tilting object $\cC$. By \cite[Prop 1.5]{iyama2011clusterMAINpaper}, $\cC$ should contain the $\tau_d$ closure of $D\Sigma^k$. In particular it should contain $\Sigma^k$.  Notice that the rank of $\cC$ is $(k+1)Aus(Q)$, and the rank of projective-injective objects of $\Sigma^k$ is $(k-1)Aus(Q)+2n$. If we add projective non-injective together with injective non-projective $\Sigma^k$ modules, we get
\begin{align}
\rank\Gamma^k=\#\cC\geq (k-1)Aus(Q)+2n+2(Aus(Q)-n)=(k+1)Aus(Q). 
\end{align}
So, $\add(D\Sigma^k\oplus \Sigma^k)$ is $d$-cluster tilting subcategory in $\modd\Sigma^k$. By Theorem 1.6 of \cite{iyama2011clusterMAINpaper}, it is enough to prove that $\gldim\Sigma^k=3k+1$ which we show in proposition \ref{prop global dimension of sigma k DYNKIN}. Hence $\Sigma^k$ is $d$-representation finite algebra for any $d=3k+1$, $k\geq 1$.
\end{proof}

\begin{proposition}\label{prop global dimension of sigma k DYNKIN} The global dimension of $\Sigma^k$ is $3k+1$.
\end{proposition}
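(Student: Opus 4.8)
The plan is to compute $\gldim\Sigma^k$ directly inside the category $\cS^k$, using the identification $\Sigma^k\cong\End_{\cS^k}(D\Lambda\oplus\bigoplus_{1\le j\le k}G[j])$ of Lemma \ref{lemma endo of sigma DYNKIN}; write $\cQ'=D\Lambda\oplus\bigoplus_{1\le j\le k}G[j]$ for this generator, and note that $\Sigma^k=e\Gamma^k e$ for the idempotent $e$ cutting out the projective-injective $\cQ$, so that a priori its global dimension could drop. First I would record the analogue of Proposition \ref{prop Sk projective resolution, gldim general}(a): since $\Hom_{\cS^k}(\cQ',-)$ restricts to an equivalence between $\add\cQ'$ and the projective $\Sigma^k$-modules, every projective presentation of a $\Sigma^k$-module lifts to a morphism $M_1\to M_0$ in $\add\cQ'$, reducing the computation of resolutions to manipulating $\add\cQ'$-approximations in $\cS^k$. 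The decisive new feature, and the source of the drop from $3k+2$ to $3k+1$, is that $\cQ'$ contains only the \emph{injective} $\Lambda$-modules in degree $0$, whereas $\cP$ contained all of $\modd\Lambda$ there.

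For the upper bound $\gldim\Sigma^k\le 3k+1$ I would run the shift induction of Proposition \ref{prop Sk projective resolution, gldim general}(b): given a lift $M_1\to M_0$ with $M_i\in\modd\Lambda[k]$, prepending the shifted-down sequence $M_i[-1]$ contributes $3$ to the projective dimension per degree descended, so that $\pdim_{\Sigma^k}Y=3(k-1)+\pdim_{\Sigma^1}Z$ for the corresponding $\Sigma^1$-module $Z$ supported in the bottom two degrees. Everything then rests on the base case $\gldim\Sigma^1\le 4$. In the $\Gamma^1$ computation the bottom degrees contributed $5$ because the non-injective degree-$0$ modules carry a genuine length-one injective resolution, producing one extra syzygy; for $\Sigma^1$ the degree-$0$ summands of $\cQ'$ are exactly the injectives $D\Lambda$, which by Remark \ref{remark when sigma k is projective injective} give projective-injective $\Sigma^1$-modules, so this last syzygy step disappears and the base contributes only $4$. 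Feeding this back yields $\gldim\Sigma^k\le 3(k-1)+4=3k+1$.

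For the lower bound $\gldim\Sigma^k\ge 3k+1$ I would exhibit an explicit module of projective dimension exactly $3k+1$, mimicking Proposition \ref{prop gamma k global dimension}. Taking a non-injective indecomposable $\Lambda$-module $X$ with nonsplit injective resolution $0\to X\to I_0\to I_1\to 0$ and forming the induced long sequence $0\to X\to I_0\to I_1\to X[1]\to\cdots\to I_1[k]\to 0$ in $\cS^k$, the functor $\Hom_{\cS^k}(\cQ',-)$ stays exact on it by the argument of Lemma \ref{lemma hom functor is exact on triangles} and delivers a projective resolution of the associated $\Sigma^k$-module whose length is exactly one less than its $\Gamma^k$-counterpart, the degree-$0$ terms now being projective-injective instead of prolonging the resolution; minimality is checked as in Lemma \ref{lemma minimal inj res DYNKIN}. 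I expect the main obstacle to be precisely this bottom-degree bookkeeping: proving that restricting the degree-$0$ part of the generator to injectives shortens every maximal resolution by exactly one --- neither zero nor more --- which hinges on the characterization of projective-injective $\Sigma^k$-modules in Remark \ref{remark when sigma k is projective injective} and on a careful analysis of the degree-$0$/degree-$1$ approximations at the foot of the resolution.
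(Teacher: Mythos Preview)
Your lower-bound argument is essentially the paper's: take a non-injective $M\in\modd\Lambda$ with injective resolution $0\to M\to I_0\to I_1\to 0$, drop $M$ from the induced long sequence in $\cS^k$, and apply $\Hom_{\cS^k}(\cG,-)$ (with $\cG=D\Lambda\oplus\bigoplus_{1\le j\le k}G[j]$) to obtain a projective $\Sigma^k$-resolution of length $3k+1$ whose leftmost term $\Hom(\cG,I_0)$ is projective non-injective. The paper's proof in fact consists of exactly this computation and nothing more.

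Your upper-bound argument, however, contains a genuine error. You claim that ``the degree-$0$ summands of $\cQ'$ are exactly the injectives $D\Lambda$, which by Remark~\ref{remark when sigma k is projective injective} give projective-injective $\Sigma^1$-modules.'' Remark~\ref{remark when sigma k is projective injective} says precisely the opposite: for an injective $I\in\modd\Lambda$ one has $\Hom(\cG,I)\cong D\Hom_\Lambda(\nu^{-1}I,D\Lambda)$, and this is injective over $\Sigma^k$ if and only if $\nu^{-1}I\in D\Lambda$, i.e.\ only when the projective $\nu^{-1}I$ happens to be injective. Already for $\Lambda=\mathbb{K}A_2$ with $1\to 2$ one has $\nu^{-1}I_2=P_2=S_2\notin D\Lambda$, and indeed $\Hom(\cG,I_2)$ is the simple projective of $\Sigma^1\cong\mathbb{K}A_5/\rad^2$, which is \emph{not} injective. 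So the mechanism you invoke for ``this last syzygy step disappears'' is false, and the base case $\gldim\Sigma^1\le 4$ is not established by your reasoning. The shift-induction framework reducing $\Sigma^k$ to $\Sigma^1$ is reasonable, but you would need a genuinely different argument at the bottom---for instance, analysing right $\add\cG$-approximations in degrees $0$ and $1$ directly, where the degree-$0$ piece is governed by $\End_\Lambda(D\Lambda)\cong\Lambda$ rather than by the Auslander algebra.
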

\begin{proof}
Since $\Lambda$ is representation finite and hereditary algebra, the injective resolution
in $0\rightarrow M\rightarrow I_0\rightarrow I_1\rightarrow 0$  is exact and induces the sequence 
\begin{align*}
0\rightarrow M\rightarrow I_0\rightarrow I_1\rightarrow M[1]\rightarrow\cdots\rightarrow I_1[k]\rightarrow 0
\end{align*}
in $\cS^k$. By lemma \ref{lemma endo of sigma DYNKIN}, the sequence
\begin{align*}
0\rightarrow I_0\rightarrow I_1\rightarrow M[1]\rightarrow\cdots\rightarrow I_1[k]\rightarrow 0
\end{align*}
is in $D\Lambda\oplus \bigoplus G[j]$ which we denote  its additive generator by $\cG$. If we apply $\Hom_{D^b(\modd\Lambda)}(\cG,-)$ to the sequence above, we get
\begin{align*}
0\rightarrow \Hom_{D^b(\modd\Lambda)}(\cG, I_0)\rightarrow \Hom_{D^b(\modd\Lambda)}(\cG,I_1)\rightarrow\Hom_{D^b(\modd\Lambda)}(\cG, M[1])\rightarrow\\\cdots\rightarrow\Hom_{D^b(\modd\Lambda)}(\cG, I_0[k])\rightarrow\Hom_{D^b(\modd\Lambda)}(\cG, I_1[k])
\end{align*}
which gives the projective resolution of $Y\in\modd\Sigma^k$, which is $3k+1$, since $0\rightarrow\Hom(\cG,I_0)\rightarrow \Hom(\cG,I_1)$ is monomorphism and $\Hom(\cG,I_0)$ is projective non-injective $\Sigma^k$ module by \ref{remark when sigma k is projective-injective}. 
\end{proof}

\begin{proposition} Cluster tilting object of $\Sigma^k$ is $\add(D\Sigma^k\oplus\tau_{3k+1}D\Sigma^k)\cong \add(\Sigma^k\oplus D\Sigma^k)$ is minimal generator-cogenerator of $\Sigma^k$.
\end{proposition}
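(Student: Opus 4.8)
The plan is to combine the two available descriptions of the unique $d$-cluster tilting subcategory of $\modd\Sigma^k$ and to show that the $\tau_d$-orbit of $D\Sigma^k$ collapses after a single step. Set $d=3k+1$. By Theorem \ref{thm sigma is d rep DYNKIN} the algebra $\Sigma^k$ is $d$-representation finite, and by Proposition \ref{prop global dimension of sigma k DYNKIN} we have $\gldim\Sigma^k=3k+1=d$, so $\Sigma^k$ is $d$-representation finite and $d$-hereditary. Hence, by the defining property recalled in Section \ref{section prelim}, its unique $d$-cluster tilting subcategory $\cM$ admits the canonical form $\cM=\add\left(\bigoplus_{j\geq 0}\tau_d^j(D\Sigma^k)\right)$. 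On the other hand, the proof of Theorem \ref{thm sigma is d rep DYNKIN} already exhibits this same subcategory as $\cM=\add(\Sigma^k\oplus D\Sigma^k)$; in particular every indecomposable object of $\cM$ is either projective or injective. The whole argument hinges on reconciling these two descriptions.

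First I would record two dual vanishings. Since $\tau_d=\tau\Omega^{d-1}$ with $d-1\geq 1$, one has $\Omega^{d-1}P=0$ for every projective $P$, so $\tau_d$ annihilates projectives; dually $\tau_d^{-1}=\tau^{-1}\Omega^{-(d-1)}$ annihilates injectives. I would use the second vanishing to argue that for each $j\geq 1$ the module $\tau_d^j(D\Sigma^k)$ is non-injective: an injective summand at layer $j\geq 1$ would be $\tau_d^j$ of an indecomposable injective summand of $D\Sigma^k$, and applying $\tau_d^{-1}$ would force the nonzero preceding layer to vanish, a contradiction. Since $\tau_d^j(D\Sigma^k)$ lies in $\cM$ and every object of $\cM$ is projective or injective, being non-injective forces it to be projective for $j\geq 1$. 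Applying the first vanishing then gives $\tau_d^{\,j+1}(D\Sigma^k)=\tau_d\bigl(\tau_d^j(D\Sigma^k)\bigr)=0$ for all $j\geq 1$, so the orbit collapses and $\cM=\add\left(D\Sigma^k\oplus\tau_d(D\Sigma^k)\right)$.

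Comparing this with $\cM=\add(\Sigma^k\oplus D\Sigma^k)$ yields the asserted identification $\add(D\Sigma^k\oplus\tau_d D\Sigma^k)\cong\add(\Sigma^k\oplus D\Sigma^k)$; concretely $\add(\tau_d D\Sigma^k)$ is precisely the additive closure of the non-injective projective $\Sigma^k$-modules, whose count $Aus(Q)-n$ from the preceding proposition matches. Finally I would verify the generator-cogenerator claim directly: $\Sigma^k\oplus D\Sigma^k$ contains every indecomposable projective, so $\Sigma^k$ is a generator, and every indecomposable injective, so $D\Sigma^k$ is a cogenerator; it is minimal because deleting any indecomposable summand removes either a projective or an injective and thereby destroys the generating or cogenerating property.

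The hard part will be the orbit-termination step, i.e. ensuring that no injective indecomposable reappears as a higher $\tau_d$-translate of $D\Sigma^k$. This is exactly where the structural input "every object of $\cM$ is projective or injective" from Theorem \ref{thm sigma is d rep DYNKIN}, combined with the vanishing $\tau_d(\mathrm{proj})=0$ and $\tau_d^{-1}(\mathrm{inj})=0$, does the decisive work; once that is in place the remaining assertions are bookkeeping with the cardinalities already established.
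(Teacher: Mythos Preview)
Your proposal is correct and follows essentially the same route as the paper: both rest on the $\tau_d$-orbit description of the unique cluster tilting subcategory for a $d$-representation finite $d$-hereditary algebra, combined with the equality $\cM=\add(\Sigma^k\oplus D\Sigma^k)$ already established in Theorem~\ref{thm sigma is d rep DYNKIN}. The only difference is packaging: the paper delegates the orbit-termination step entirely to \cite[Theorem~1.6]{iyama2011clusterMAINpaper}, whereas you unpack it explicitly via the vanishings $\tau_d(\mathrm{proj})=0$ and $\tau_d^{-1}(\mathrm{inj})=0$, and you also spell out the (straightforward) minimal generator-cogenerator verification that the paper leaves implicit.
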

\begin{proof}
By Theorem \ref{thm sigma is d rep DYNKIN}, $\Sigma^k$ is $d$-representation finite algebra of global dimension $d$. Therefore higher Auslander-Reiten translate $\tau_{d}$ exists. By \cite[Theorem 1.6]{iyama2011clusterMAINpaper}, result follows.
\end{proof}

\begin{proposition} $\Sigma^k$ can be realized as endomorphism algebra of fundamental domain of k-cluster category.
\end{proposition}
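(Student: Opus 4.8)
The plan is to identify $\Sigma^k$ with $\End_{D^b(\modd\Lambda)}(\cF)$, where $\cF := \Lambda[k]\oplus\bigoplus_{0\le j\le k-1}G[j]$ is the fundamental domain of the $k$-cluster category $D^b(\modd\Lambda)/\tau^{-1}[k]$ recalled in the introduction, regarded as an object of $D^b(\modd\Lambda)$. The starting point is Lemma~\ref{lemma endo of sigma DYNKIN}, which gives $\Sigma^k\cong\End_{\cS^k}(D\Lambda\oplus\bigoplus_{1\le j\le k}G[j])$; since $\cS^k$ is a full subcategory of $D^b(\modd\Lambda)$, this equals $\End_{D^b(\modd\Lambda)}(W)$ for $W:=D\Lambda\oplus\bigoplus_{1\le j\le k}G[j]$. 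Thus the task reduces to producing an isomorphism $\End_{D^b(\modd\Lambda)}(W)\cong\End_{D^b(\modd\Lambda)}(\cF)$.

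First I would compare $W$ and $\cF$ layer by layer. Both consist of $k$ complete copies of $\add G=\modd\Lambda$ placed in consecutive shifts, together with a single boundary layer: $W$ carries the injective modules $D\Lambda$ in the shift $[0]$ and the full module layers in shifts $[1],\dots,[k]$, whereas $\cF$ carries the full module layers in shifts $[0],\dots,[k-1]$ and only the projective modules $\Lambda$ in the top shift $[k]$. On the $k$ full layers the shift autoequivalence $[-1]$ of $D^b(\modd\Lambda)$ carries $\bigoplus_{1\le j\le k}G[j]$ to $\bigoplus_{0\le j\le k-1}G[j]$ and is an equivalence of the corresponding additive subcategories, so it matches this part of the two endomorphism algebras identically. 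For the boundary I would invoke the Serre/Nakayama functor of Remark~\ref{remark serre functor on triangulated cat}: since $\nu(\Lambda)=D\Lambda$ and $\tau=\nu[-1]$, one gets $\tau^{-1}(D\Lambda)=\Lambda[1]$, so the injective layer $D\Lambda[0]$ and the projective layer $\Lambda[k]$ are AR--Serre dual boundaries at opposite ends of the same strip, matched by $I\mapsto\nu^{-1}(I)=P$.

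What remains, and what I expect to be the crux, is to check that these two matchings assemble into a single algebra isomorphism, i.e.\ that the connecting morphisms agree. In $\End_{D^b(\modd\Lambda)}(W)$ the boundary layer $D\Lambda[0]$ is glued to the first module layer $G[1]$ by $\Ext^1_\Lambda(D\Lambda,G)$ (the only non-diagonal slot permitted by Definition~\ref{definition dynkin category}), while in $\End_{D^b(\modd\Lambda)}(\cF)$ the boundary layer $\Lambda[k]$ is glued to the last module layer $G[k-1]$ by $\Ext^1_\Lambda(G,\Lambda)$. The hard part is to produce a compatible isomorphism of these connecting bimodules; I would extract it from AR--Serre duality, using $\Ext^1_\Lambda(X,Y)\cong D\Hom_{D^b(\modd\Lambda)}(Y,\tau X)$ together with $\nu(\Lambda)=D\Lambda$ to pass between $\Ext^1_\Lambda(D\Lambda,G)$ and $\Ext^1_\Lambda(G,\Lambda)$. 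A first sanity check is that $\sum_{\text{proj}}\dim P=\sum_{\text{inj}}\dim I$ (both count the paths of $Q$), which forces these two connecting spaces to have equal dimension; the duality then upgrades this to an isomorphism respecting the module-layer actions. Once this compatibility is verified, the shift-plus-Nakayama assignment is an isomorphism $\End_{D^b(\modd\Lambda)}(W)\cong\End_{D^b(\modd\Lambda)}(\cF)$, and combined with Lemma~\ref{lemma endo of sigma DYNKIN} it realizes $\Sigma^k$ as the endomorphism algebra of the fundamental domain of the $k$-cluster category.
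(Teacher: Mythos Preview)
Your plan has a structural problem that prevents the two ``matchings'' from assembling into an algebra isomorphism. Under your bijection, the full layers are sent by $[-1]$, so $G[1]\mapsto G[0]$, while the boundary $D\Lambda[0]$ is sent by $\nu^{-1}$ to $\Lambda[k]$. But in $W$ the boundary $D\Lambda[0]$ is adjacent to $G[1]$ (the connecting bimodule is $\Hom_{D^b}(D\Lambda,G[1])=\Ext^1_\Lambda(D\Lambda,G)$), whereas the images $\Lambda[k]$ and $G[0]$ sit at \emph{opposite} ends of $\cF$: there is no nonzero morphism between them in $D^b(\modd\Lambda)$, so there is no slot in $\End_{D^b}(\cF)$ to receive this bimodule. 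Your proposed identification $\Ext^1_\Lambda(D\Lambda,G)\cong\Ext^1_\Lambda(G,\Lambda)$ compares a bimodule sitting between blocks $0$ and $1$ of $\End(W)$ with one sitting between blocks $k-1$ and $k$ of $\End(\cF)$; matching those would force you to reverse the order of the full layers, which turns the shift identification into an anti-isomorphism rather than an isomorphism. So the piecewise assignment $[-1]$-on-layers plus $\nu^{-1}$-on-boundary cannot be upgraded to an algebra map in the way you outline.

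The paper avoids all of this by applying a \emph{single} autoequivalence to the whole of $W$: the shift $[-1]$ sends $W=D\Lambda\oplus\bigoplus_{1\le j\le k}G[j]$ to $W[-1]=D\Lambda[-1]\oplus\bigoplus_{0\le j\le k-1}G[j]$, and since $[-1]$ is an autoequivalence of $D^b(\modd\Lambda)$ this immediately gives $\Sigma^k\cong\End_{D^b}(W)\cong\End_{D^b}(W[-1])$. The point is then only that $W[-1]$ is itself a fundamental domain for $D^b(\modd\Lambda)/\tau^{-1}[k]$: indeed $\tau^{-1}[k]$ identifies $D\Lambda[-1]$ with $\tau^{-1}(D\Lambda)[k-1]=\Lambda[k]$, so $W[-1]$ and the standard $\cF=\bigoplus_{0\le j\le k-1}G[j]\oplus\Lambda[k]$ represent the same set of indecomposables in the orbit category. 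No bimodule bookkeeping is needed. If you want to salvage your approach, the cleanest fix is exactly this: shift the boundary along with the full layers rather than treating it separately.
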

\begin{proof}
Recall that the fundamental domain of $k$-cluster category is of the form 
\begin{align*}
G\oplus G[1]\oplus\cdots\oplus G[k-1]\oplus \Lambda[k]
\end{align*}
which is equivalent to
\begin{align*}
D\Lambda[-1]\oplus  G\oplus G[1]\oplus\cdots\oplus G[k-1]
\end{align*}
On the other hand, by using symmetry structure of $D^b(\modd\Lambda)$, we can apply $[-1]$ to $D\Lambda\oplus G[1]\oplus\cdots\oplus G[k]$, and its endomorphsim algebra is Morita equivalent to $\Sigma^k$. Therefore $\Sigma^k$ can be realized as endomorphism algebra of fundamental domain of k-cluster category.
\end{proof}

\begin{corollary}
The quiver of $\Sigma^k$ is simply the Auslander-Reiten quiver of fundamental domain of $k$-cluster category. 
\end{corollary}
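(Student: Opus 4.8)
The plan is to read this off from the preceding proposition together with the general principle---already the engine behind Theorems~\ref{THM dynkin} and \ref{THM higher hereditary}---that if $\mathcal{C}=\add T$ is a Krull--Schmidt category with additive generator $T$, then the Gabriel quiver of $\left(\End_{\mathcal{C}}(T)\right)^{op}$ is the Auslander--Reiten quiver of $\mathcal{C}$: its vertices are the indecomposable summands of $T$, and its arrows form a basis of $\rad/\rad^2$, i.e.\ they are the irreducible morphisms between these summands.

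First I would fix the category whose AR quiver is relevant. By Lemma~\ref{lemma endo of sigma DYNKIN} we have $\Sigma^k\cong\End_{\cS^k}(\cG)$ with $\cG=D\Lambda\oplus\bigoplus_{1\leq j\leq k}G[j]$, so by the principle above the AR quiver of $\add\cG$ is (the opposite of) the Gabriel quiver of $\Sigma^k$. The preceding proposition shows that, after applying the autoequivalence $[-1]$ of $D^b(\modd\Lambda)$, the subcategory $\add\cG$ is carried onto the fundamental domain $G\oplus G[1]\oplus\cdots\oplus G[k-1]\oplus\Lambda[k]$ of the $k$-cluster category $D^b(\modd\Lambda)/\tau^{-1}[k]$. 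Since an autoequivalence preserves indecomposables and irreducible morphisms, it induces an isomorphism between the AR quiver of $\add\cG$ and the AR quiver of the fundamental domain. Chaining the two identifications then gives that the quiver of $\Sigma^k$ has the indecomposables of the fundamental domain as vertices and the irreducible morphisms among them as arrows, which is precisely its AR quiver.

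The only point that needs care is the distinction between the Gabriel quiver and its opposite: the principle above produces the AR quiver on $(\Sigma^k)^{op}$ rather than on $\Sigma^k$. I expect this to be the main (and only) obstacle, and it is resolved by the Serre/translation symmetry of the Dynkin cluster category: the AR quiver of the fundamental domain sits inside the AR quiver of $D^b(\modd\Lambda)/\tau^{-1}[k]$, which is isomorphic to its own opposite, so the reversal introduces no genuine difference and the asserted identification of quivers holds.
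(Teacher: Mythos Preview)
Your argument is correct and is exactly the paper's (implicit) approach: the corollary is stated without proof as an immediate consequence of the preceding proposition together with the standing principle that the Gabriel quiver of the endomorphism algebra of an additive generator coincides with the Auslander--Reiten quiver of the corresponding additive category. Your extra paragraph on the opposite-quiver distinction goes beyond what the paper does here (contrast Theorem~\ref{THM dynkin}, where ``opposite'' is explicit); note, however, that self-duality of the full orbit category need not restrict to a fixed fundamental domain, so the cleanest reading is simply that the corollary, like the rest of the paper, is tacitly up to passing to the opposite quiver.
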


\begin{remark} We remark that $Aus(Q)=\vert R^{+}\vert$ where $R^+$ is set of all positive roots of $Q$ \cite{GLS06}. Therefore the cardinality of $(3k+1)$-cluster tilting object of $\Sigma^k$ is a multiple of $\vert R^{+}\vert$. .
\end{remark}

\subsection{$n$-Representation Finite Case}\label{subsection sigma higher}

Let $R$ be the rank of $n$-representation finite $n$-hereditary algebra $A$ where $\cM\subset\modd A$ is $n$-cluster tilting subcategory. Let $\#\cM$ be the rank of higher Auslander algebra of $\Gamma^k$, $\Sigma^k=\End_{\Gamma^k}(\cQ)$ where $\cQ$ is additive generator of projective-injective $\Gamma^k$ modules, $\add G=\modd A$, $\add\cP=\cM^k$, $\cP=\bigoplus_{0\leq j\leq k}\cM[jn]$.

\begin{lemma}\label{lemma endo of sigma HIGHER}
The algebra $\Sigma^k$ is isomorphic to $\End_{\cM^k}(DA\oplus \bigoplus_{1\leq j\leq k}\cM[jn])$.
\end{lemma}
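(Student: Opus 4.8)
The plan is to mirror the proof of Lemma~\ref{lemma endo of sigma DYNKIN} essentially verbatim, substituting the higher-dimensional data $(\cM^k, G, DA, \ext^n)$ for the Dynkin data $(\cS^k, G, D\Lambda, \ext^1)$ and the functor $\Hom_{\cM^k}(\cP,-)$ for $\Hom_{\cS^k}(\cP,-)$. The argument rests on two ingredients already available: an explicit description of the additive generator $\cQ$ of the projective-injective $\Gamma^k$-modules, and the full faithfulness of $\Hom_{\cM^k}(\cP,-)$ on $\add\cP=\cM^k$.

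First I would invoke Proposition~\ref{prop projective-injective modules Higher}(2), which states that a $\Gamma^k$-module is projective-injective precisely when it has the form $\Hom_{\cM^k}(\cP,N)$ with $N$ an injective $\cM$-module or $N=X[jn]$ for some $1\le j\le k$. Since $A$ is $n$-representation finite and $n$-hereditary with $\cM=\add\bigl(\bigoplus_{j\ge 0}\tau_n^j(DA)\bigr)$, the injective $A$-modules are exactly the summands of $DA$, and they all lie in $\cM$ as the $j=0$ part of this generating set; hence the injective objects of $\cM$ form $\add DA$, while the shifted objects assemble into $\bigoplus_{1\le j\le k}\cM[jn]$. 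Consequently the additive generator of the projective-injective $\Gamma^k$-modules is
\begin{align*}
\cQ\cong\Hom_{\cM^k}\Bigl(\cP,\; DA\oplus\bigoplus_{1\le j\le k}\cM[jn]\Bigr).
\end{align*}

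Finally I would compute $\Sigma^k=\End_{\Gamma^k}(\cQ)$ by Yoneda's lemma, exactly as in the Dynkin case. Because $\Hom_{\cM^k}(\cP,-)$ restricts to an equivalence from $\add\cP=\cM^k$ onto the projective $\Gamma^k$-modules (used already in Proposition~\ref{prop global dimension of higher algebra}), writing $W=DA\oplus\bigoplus_{1\le j\le k}\cM[jn]$ gives
\begin{align*}
\End_{\Gamma^k}(\cQ)
&\cong\Hom_{\Gamma^k}\Bigl(\Hom_{\cM^k}(\cP,W),\,\Hom_{\cM^k}(\cP,W)\Bigr)\\
&\cong\Hom_{\cM^k}(W,W)=\End_{\cM^k}(W),
\end{align*}
which is the claimed isomorphism.

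The computation is routine; the only point requiring genuine care is the identification in the middle step—that the phrase ``injective $\cM$-module'' in Proposition~\ref{prop projective-injective modules Higher} is captured exactly by $DA$. This follows from the structure $\cM=\add\bigl(\bigoplus_j\tau_n^j(DA)\bigr)$: every injective $A$-module belongs to $\cM$, and conversely any object of $\cM$ that is injective over $A$ is a summand of $DA$. I do not anticipate a substantive obstacle beyond this bookkeeping, since the higher-dimensional subtleties (replacing $\ext^1$ by $\ext^n$, and the exactness of $\Hom_{\cM^k}(\cP,-)$ on the relevant sequences) have already been absorbed into Lemma~\ref{lemma hom functor is exact on n angles higher version} and the cited propositions.
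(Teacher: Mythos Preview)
Your proof is correct and follows essentially the same argument as the paper: invoke Proposition~\ref{prop projective-injective modules Higher} to identify $\cQ\cong\Hom_{\cM^k}(\cP,DA\oplus\bigoplus_{1\le j\le k}\cM[jn])$, then apply Yoneda to pass from $\End_{\Gamma^k}(\cQ)$ to $\End_{\cM^k}(W)$. Your additional remark that ``injective $\cM$-module'' means precisely a summand of $DA$ is a helpful clarification that the paper leaves implicit.
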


\begin{proof}
In proposition \ref{prop projective-injective modules Higher}, we showed that any projective-injective $\Gamma^k$ module is of the form $\Hom_{\cM^k}(\cP,I)$ or $\Hom_{\cM^k}(\cP,X[jn])$ where $I\in\modd A$ is injective and $1\leq j\leq k$. Therefore $\cQ=\Hom_{\cM^k}(\cP,DA\oplus\bigoplus_{1\leq j\leq k}\cM[jn] )$.
We have 
\begin{align*}
\Sigma^k&=\End_{\Gamma^k}(\cQ)\cong\Hom_{\Gamma^k}(\cQ,\cQ)\\
&\cong \Hom_{\Gamma^k}\left(\Hom_{\cM^k}(\cP,DA\oplus\bigoplus_{1\leq j\leq k}\cM[jn] ),\Hom_{\cM^k}(\cP,DA\oplus\bigoplus_{1\leq j\leq k}\cM[jn] )\right)\\
&\cong \Hom_{\cM^k}(DA\oplus\bigoplus_{1\leq j\leq k}\cM[jn],DA\oplus\bigoplus_{1\leq j\leq k}\cM[jn])\\
&=\End_{\cM^k}(DA\oplus\bigoplus_{1\leq j\leq k}\cM[jn])
\end{align*}
by Yoneda's lemma.
\end{proof}

\begin{remark}\label{remark when sigma k is projective inj HIGHER} We give characterization when $\Hom_{\cM^k}(DA \oplus \bigoplus_{1\leq j\leq k}\cM[jn], Z)$ is projective non-injective $\Sigma^k$ module. 
Let $Z\in\cM$ be a non-injective object. Then by autoequivalence in remark \ref{remark serre functor on higher angulated cat}, we have
\begin{align*}
\Hom_{\cM^k}(DA \oplus \bigoplus_{1\leq j\leq k}\cM[jn], Z)&\cong\Hom_{D^b(\modd A)}(DA \oplus \bigoplus_{1\leq j\leq k}\cM[jn], Z)\\
&\cong D\Hom_{D^b(\modd A)}(\tau_n^{-1}Z[-n],DA \oplus \bigoplus_{1\leq j\leq k}\cM[jn])\\
&\cong D\Hom_{\cM^k}(\tau^{-1}_n Z[-n],DA \oplus \bigoplus_{1\leq j\leq k}\cM[jn])
\end{align*}
Therefore $\Hom_{\cM^k}(DA \oplus \bigoplus_{1\leq j\leq k}\cM[jn], Z)$ is injective if and only if $\tau_n^{-1}Z[-n]\in D\Lambda \oplus \bigoplus_{1\leq j\leq k}G[j] $. This implies, either $Z=X[jn]$ for $j\geq 2$ or $Z=X[n]$ with $\tau^{-1}_nX\in DA$. \\
Similarly, for an injective $I\in\modd\Lambda$, the duality becomes
\begin{align*}
\Hom_{\cM^k}(DA \oplus \bigoplus_{1\leq j\leq k}\cM[jn], I)&\cong\Hom_{A}(DA, I)\\
&\cong D\Hom_{A}(\nu^{-1}_n I,DA)
\end{align*}
Therefore $\Hom_{A}(\nu^{-1}_nI,DA)$ is projective over $\modd A^{op}$ if and only if $\nu^{-1}I\in DA$.
\end{remark}
\begin{lemma}\label{lemma number of proj non injective NONDYKIN} The number of projective non-injective $\Sigma^k$ modules is $\#\cM-R$.
\end{lemma}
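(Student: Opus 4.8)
The plan is to mirror the proof of Lemma \ref{lemma number of proj non injective DYKIN}, replacing the Auslander algebra of $\modd\Lambda$ by the higher Auslander algebra of $A$ and the ordinary translate $\tau$ by $\tau_n$. By Lemma \ref{lemma endo of sigma HIGHER} we have $\Sigma^k\cong\End_{\cM^k}(\cG)$ with $\cG:=DA\oplus\bigoplus_{1\leq j\leq k}\cM[jn]$, so the functor $\Hom_{\cM^k}(\cG,-)$ identifies the indecomposable projective $\Sigma^k$-modules with the indecomposable summands $Z$ of $\cG$; these split into the $R$ injective modules sitting in degree $0$ and the $\#\cM$ shifted objects $X[jn]$ in each layer $1\leq j\leq k$. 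First I would record, using Remark \ref{remark when sigma k is projective inj HIGHER}, that every summand living in a layer $\cM[jn]$ with $j\geq 2$ produces a projective-injective module, since then the relevant rotation of $X$ still lies in $\cG$. Consequently the projective non-injective modules can only arise from the two bottom layers $DA$ and $\cM[n]$, and the problem is reduced to counting summands there.

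Next I would apply the two functorial-isomorphism criteria of Remark \ref{remark when sigma k is projective inj HIGHER}: for $Z=X[n]$ the module $\Hom_{\cM^k}(\cG,X[n])\cong D\Hom_{\cM^k}(\tau_n^{-1}X,\cG)$ is injective precisely when $\tau_n^{-1}X\in DA$, and for $Z=I$ injective the module $\Hom_{\cM^k}(\cG,I)\cong D\Hom_{A}(\nu_n^{-1}I,DA)$ is injective precisely when $\nu_n^{-1}I\in DA$. To turn these pointwise conditions into a global count I would invoke the structure of the unique $n$-cluster tilting object $\cM=\add\big(\bigoplus_{i\geq 0}\tau_n^i(DA)\big)$: the subcategory $\cM$ decomposes into $R$ many $\tau_n^{-1}$-orbits, each beginning at an indecomposable projective and terminating at an indecomposable injective, and $\tau_n$ restricts to a bijection between the non-injective and the non-projective indecomposables of $\cM$ (so each of these two sets has $\#\cM-R$ elements). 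Tracking along these orbits which summands of the bottom two layers have their image under $\tau_n^{-1}$ (respectively $\nu_n^{-1}$) landing outside $DA$ — the higher analogue of the statement that the $\tau_n^{-i}$-closure for $i\geq 2$ cannot meet $DA$ — should yield exactly $\#\cM-R$ projective non-injective modules.

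The main obstacle is the careful boundary bookkeeping at the two ends of each $\tau_n^{-1}$-orbit: one must treat separately the indecomposable projectives and injectives of $A$, where $\tau_n^{-1}$ (or $\nu_n^{-1}$) is either undefined on $\cM$ or carries an object into a different shift-layer, and one must ensure that no object is counted twice across the $DA$-layer and the $\cM[n]$-layer. Once this is settled, a dual argument — interchanging the roles of projectives and injectives through the Nakayama equivalence of Remark \ref{remark serre functor on higher angulated cat}(ii) — shows that the number of injective non-projective $\Sigma^k$-modules coincides with the number of projective non-injective ones, namely $\#\cM-R$, a symmetry that will be convenient in the subsequent cardinality computations.
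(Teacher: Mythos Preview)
Your proposal is correct and follows essentially the same approach as the paper: both invoke Remark \ref{remark when sigma k is projective inj HIGHER} to show that all summands in layers $j\geq 2$ yield projective-injectives, then reduce to the bottom two layers and count via the $\tau_n^{-1}$-orbit decomposition of $\cM$, finishing with the duality observation. Your version is simply a more explicit unpacking of the same argument—the paper compresses the orbit count into the single sentence ``the closure of $\tau_n^{-i}X$ orbit where $i\geq 2$ cannot belong to $DA$''—and your caution about the boundary bookkeeping (projectives/injectives at orbit endpoints, avoiding double counts between the $DA$-layer and the $\cM[n]$-layer) is precisely the content that the paper leaves to the reader.
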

\begin{proof}
By remark \ref{remark when sigma k is projective inj HIGHER}, any $\Hom_{D^b(\modd A)}(DA \oplus \bigoplus_{1\leq j\leq k}\cM[jn],X[jn])$, $j\geq 2$ is projective-injective $\Sigma^k$ module. For the remaining objects, the closure of $\tau^{-i}_nX$ orbit where $i\geq 2$ cannot belong to $DA$, hence there are $\#\cM-R$ many projective non-injective $\Sigma^k$ modules. By duality, it is the same number of injective non-projective $\Sigma^k$ modules.
\end{proof}
 We summarize the results below.

\begin{proposition} We have the following cardinalities.
\begin{enumerate}[label=\arabic*)]
\item The rank of $\Gamma^k$ is $(k+1)\#\cM$
\item The rank of $\Sigma^k$ is $k.\#\cM+R$
\item The number of projective-injective objects of $\modd\Gamma^k$ is the rank of $\Sigma^k$.
\item The number of projective-injective objects of $\Sigma^k$ is $(k-1)\#\cM+2R$.
\item The number of injective but not projective $\Gamma^k$ modules is $\#\cM-R$.
\item The number of injective but not projective $\Sigma^k$ modules is $\#\cM-R$.
\end{enumerate}
\end{proposition}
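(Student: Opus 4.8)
The plan is to reduce every assertion to counting nonisomorphic indecomposable summands, using the standard fact that for an additive generator $T$ the rank of $\End(T)$ equals the number of indecomposable summands of $T$, and then to assemble the structural descriptions already established.

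First I would record $\rank\Gamma^k$. Since $\add\cP=\cM^k$ and $\cM^k=\cM\oplus\cM[n]\oplus\cdots\oplus\cM[kn]$ is a disjoint union of $k+1$ shifted copies of $\cM$, each supplying $\#\cM$ pairwise nonisomorphic indecomposables, we get $\rank\Gamma^k=(k+1)\#\cM$, which is the first cardinality. For the second and third, recall that $\Sigma^k=\End_{\Gamma^k}(\cQ)$ with $\cQ$ the additive generator of the projective-injective $\Gamma^k$-modules; hence $\rank\Sigma^k$ is, tautologically, the number of indecomposable projective-injective $\Gamma^k$-modules, which is the third assertion. To evaluate this number I would invoke Proposition \ref{prop projective-injective modules Higher}: the indecomposable projective-injective $\Gamma^k$-modules are the $\Hom_{\cM^k}(\cP,I)$ with $I$ an indecomposable injective $A$-module, sitting at shift $0$ because $DA\subset\cM$ (there are $R$ of these), together with all $\Hom_{\cM^k}(\cP,X[jn])$ for indecomposable $X\in\cM$ and $1\le j\le k$ (there are $k\#\cM$ of these). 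As these two families lie in distinct shifts they are disjoint, so the total is $k\#\cM+R$, giving the second cardinality.

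The remaining three counts I would obtain by subtraction and duality, using that the number of indecomposable injective modules of any finite-dimensional algebra equals its rank. For the fifth assertion, $\Gamma^k$ has $(k+1)\#\cM$ indecomposable injectives, and subtracting the $k\#\cM+R$ projective-injective ones leaves $\#\cM-R$ that are injective but not projective. For the fourth, I would cite Lemma \ref{lemma number of proj non injective NONDYKIN}, which produces $\#\cM-R$ projective-non-injective $\Sigma^k$-modules; subtracting this from $\rank\Sigma^k=k\#\cM+R$ leaves $(k-1)\#\cM+2R$ projective-injective $\Sigma^k$-modules. Finally, the sixth assertion is the dual of Lemma \ref{lemma number of proj non injective NONDYKIN} (equivalently $\rank\Sigma^k$ minus the projective-injective count just computed), again $\#\cM-R$.

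The only step that is not purely formal is the count of projective-injective $\Sigma^k$-modules underlying the fourth assertion. It rests on the isomorphism of Lemma \ref{lemma endo of sigma HIGHER} and the characterization of Remark \ref{remark when sigma k is projective inj HIGHER}: every summand at a shift $j\ge 2$ is projective-injective, while at shift $1$ only those $X[n]$ with $\tau^{-1}_nX\in DA$ are, and at shift $0$ the analogous condition $\nu^{-1}_nI\in DA$ singles out the relevant injectives. Pinning down these $\tau_n$-orbit conditions is where the genuine work lies, but it has already been carried out in Lemma \ref{lemma number of proj non injective NONDYKIN}; once that input is granted, all six equalities are bookkeeping.
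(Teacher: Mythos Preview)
Your proposal is correct and follows essentially the same approach as the paper: both reduce every item to counting indecomposable summands and invoke Proposition \ref{prop projective-injective modules Higher}, Remark \ref{remark when sigma k is projective inj HIGHER}, and Lemma \ref{lemma number of proj non injective NONDYKIN} at the same places. The only cosmetic difference is in item 5), where the paper argues directly that the injective non-projective $\Gamma^k$-modules all live in the shift-$0$ copy of $\cM$ (hence $\#\cM-R$ of them), whereas you obtain the same number by subtracting the projective-injective count from the total number of indecomposable injectives; both are equally valid bookkeeping.
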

\begin{proof}
\begin{enumerate}[label=\arabic*)]
\item Since $\cM^k$ has $(k+1)\#\cM$ indecomposable objects, claim follows.
\item The rank of $\Sigma^k$ is the number of projective-injective objects of $\Gamma^k$. By proposition \ref{prop projective-injective modules Higher} there are  $(k-2)\#\cM+R$ projective-injective $\Gamma^k$ modules which is the rank of $\Sigma^k$.
\item Since $\Sigma^k$ is the endomorphism algebra of projective-injective objects of $\Gamma^k$, The rank is the number of nonisomorphic projective-injective $\Gamma^k$-modules
\item By remark \ref{remark when sigma k is projective inj HIGHER} and lemma \ref{lemma number of proj non injective NONDYKIN}
\item The injective but not projective objects lie in the $\End_{A}(\cM)$ part. There are R projective-injective objects there. Hence in total, there are $\#\cM-R$ many injective nonprojective objects of $\modd\Gamma^k$.
\item By remark \ref{remark when sigma k is projective inj HIGHER} and lemma \ref{lemma number of proj non injective NONDYKIN}
\end{enumerate}
\end{proof}

\begin{theorem}\label{thm sigma is d rep HIGHER}
Let $d=(n+2)k+n$. Then, $\Sigma^k$ is $d$-representation finite algebra.
\end{theorem}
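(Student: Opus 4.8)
The plan is to follow the proof of the Dynkin analogue (Theorem~\ref{thm sigma is d rep DYNKIN}) almost verbatim, replacing $Aus(Q)$ by $\#\cM$, the rank of $Q$ by the rank $R$ of $A$, and the target length $3k+1$ by $d=(n+2)k+n$. The starting point is Proposition~\ref{prop higher algebra is higher auslander computation for domdim}, which says that $\Gamma^k$ is a higher Auslander algebra with $\gldim\Gamma^k=\domdim\Gamma^k=(n+2)k+n+1=d+1$. Applying the higher Auslander correspondence of \cite{iyama2011clusterMAINpaper} to $\Sigma^k=\End_{\Gamma^k}(\cQ)$ then yields a $d$-cluster tilting $\Sigma^k$-module $\cC$ with $\End_{\Sigma^k}(\cC)\cong\Gamma^k$; in particular the number of indecomposable summands of $\cC$ equals $\rank\Gamma^k=(k+1)\#\cM$, and by \cite[Prop 1.5]{iyama2011clusterMAINpaper} the module $\cC$ contains the $\tau_d$-closure of $D\Sigma^k$, hence in particular both $\Sigma^k$ and $D\Sigma^k$.

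Next I would pin down $\cC$ explicitly by a dimension count. The cardinalities recorded above give $(k-1)\#\cM+2R$ projective-injective $\Sigma^k$-modules, together with $\#\cM-R$ projective non-injective and $\#\cM-R$ injective non-projective ones (Lemma~\ref{lemma number of proj non injective NONDYKIN} and Remark~\ref{remark when sigma k is projective inj HIGHER}), so that
\begin{align*}
(k-1)\#\cM+2R+2(\#\cM-R)=(k+1)\#\cM=\#\cC.
\end{align*}
Since $\add(D\Sigma^k\oplus\Sigma^k)$ is contained in $\add\cC$ and the two have the same number of indecomposables, they coincide; thus the $d$-cluster tilting module is the minimal generator-cogenerator $\add(D\Sigma^k\oplus\Sigma^k)$.

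By \cite[Theorem 1.6]{iyama2011clusterMAINpaper} it then remains only to prove that $\gldim\Sigma^k=d=(n+2)k+n$, and this is the step I expect to be the main obstacle. Using Lemma~\ref{lemma endo of sigma HIGHER} I would work with $\Sigma^k=\End_{\cM^k}(\cG)$, where $\cG$ is the additive generator of $DA\oplus\bigoplus_{1\leq j\leq k}\cM[jn]$. For a module $M\in\cM$ of injective dimension $n$, the $n$-hereditary hypothesis gives an exact injective resolution $0\rightarrow M\rightarrow I_0\rightarrow\cdots\rightarrow I_n\rightarrow 0$, which by the $(n+2)$-angulated structure induces in $\cM^k$ the sequence
\begin{align*}
0\rightarrow M\rightarrow I_0\rightarrow\cdots\rightarrow I_n\rightarrow M[n]\rightarrow\cdots\rightarrow I_n[kn]\rightarrow 0.
\end{align*}
Deleting the term $M$ and applying $\Hom_{\cM^k}(\cG,-)$ produces, by Lemma~\ref{lemma hom functor is exact on n angles higher version}, a projective $\Sigma^k$-resolution with $n+1$ terms at level $0$ and $n+2$ terms at each of the $k$ higher levels, hence of length $(n+2)k+n$; it is minimal because its first term $\Hom_{\cM^k}(\cG,I_0)$ is projective non-injective by Remark~\ref{remark when sigma k is projective inj HIGHER}. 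This gives the lower bound $\gldim\Sigma^k\geq d$. The genuinely delicate point is the matching upper bound $\gldim\Sigma^k\leq d$, which, as the classical case $d=1$ already shows, does not follow formally from $\Gamma^k$ being higher Auslander and must instead be verified uniformly on all $\Sigma^k$-modules, drawing on the exactness of $\Hom_{\cM^k}(\cP,-)$ on $(n+2)$-angles together with the $n$-exact-sequence techniques of \cite{jasso2016n} already used in Proposition~\ref{prop global dimension of higher algebra}. Once $\gldim\Sigma^k=d$ is established, $\Sigma^k$ has global dimension $d$ and admits the $d$-cluster tilting module $\add(D\Sigma^k\oplus\Sigma^k)$, so \cite[Theorem 1.6]{iyama2011clusterMAINpaper} concludes that $\Sigma^k$ is $d$-representation finite.
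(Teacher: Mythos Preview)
Your approach is essentially identical to the paper's: invoke Proposition~\ref{prop higher algebra is higher auslander computation for domdim} to see $\Gamma^k$ is higher Auslander of global dimension $d+1$, use the higher Auslander correspondence to obtain a $d$-cluster tilting $\Sigma^k$-module $\cC$ containing $\Sigma^k\oplus D\Sigma^k$, run the cardinality count $(k-1)\#\cM+2R+2(\#\cM-R)=(k+1)\#\cM=\rank\Gamma^k$ to force $\add\cC=\add(\Sigma^k\oplus D\Sigma^k)$, and then reduce via \cite[Theorem 1.6]{iyama2011clusterMAINpaper} to the equality $\gldim\Sigma^k=(n+2)k+n$, which the paper records as Proposition~\ref{prop global dimension of sigma k HIGHER}.

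On the global dimension step you are in fact more scrupulous than the paper. The proof of Proposition~\ref{prop global dimension of sigma k HIGHER} is exactly the lower-bound construction you outline (drop the term $M$ from the extended injective resolution in $\cM^k$ and apply $\Hom_{\cM^k}(\cG,-)$ to produce a projective $\Sigma^k$-resolution of length $(n+2)k+n$ whose leftmost term is projective non-injective); the paper does not supply a separate argument for the upper bound $\gldim\Sigma^k\leq d$. So the concern you flag is real, but it is a gap shared with the paper rather than one introduced by your write-up. Filling it would indeed proceed along the lines you suggest, by adapting the case analysis of Proposition~\ref{prop global dimension of higher algebra} to presentations with $M_1,M_0\in\add\cG$.
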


\begin{proof}
By proposition \ref{prop global dimension of higher algebra}, $\Sigma^k$ has a $d$-cluster tilting object $\cC$. By \cite[Prop 1.5]{iyama2011clusterMAINpaper}, $\cC$ should contain the $\tau_d$ closure of $D\Sigma^k$. In particular it should contain $A$. Notice that the rank of $\cC$ is $(k+1)\#\cM$, and the rank of projective-injective objects of $\Sigma^k$ is $(k-1)\#\cM+2R$. If we add projective non-injective together with injective non-projective $\Sigma^k$ modules, we get
\begin{align}
\rank\Gamma^k=\#\cC\geq (k-1)\#\cM+2R+2(\#\cM-R)=(k+1)\#\cM. 
\end{align}
So, $\add(D\Sigma^k\oplus \Sigma^k)$ is $d$-cluster tilting subcategory in $\modd\Sigma^k$. By Theorem 1.6 of \cite{iyama2011clusterMAINpaper}, it is enough to prove that $\gldim\Sigma^k=(n+2)k+n$ which we show in proposition \ref{prop global dimension of sigma k HIGHER}. Hence $\Sigma^k$ is $d$-representation finite algebra for any $d=(n+2)k+n$, $k\geq 1$.
\end{proof}

\begin{proposition}\label{prop global dimension of sigma k HIGHER} The global dimension of $\Sigma^k$ is $(n+2)k+n$.
\end{proposition}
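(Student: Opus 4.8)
The plan is to transcribe the Dynkin computation of Proposition \ref{prop global dimension of sigma k DYNKIN} into the $(n+2)$-angulated setting, using the length-$n$ injective resolutions available because $\gldim A=n$. By Lemma \ref{lemma endo of sigma HIGHER} I may work with $\Sigma^k\cong\End_{\cM^k}(\cG)$, where $\cG:=DA\oplus\bigoplus_{1\le j\le k}\cM[jn]$ is the additive generator, so that every projective $\Sigma^k$-module has the form $\Hom_{\cM^k}(\cG,X)$ with $X\in\add\cG$. The decisive feature of $\cG$, as opposed to the full generator $\cP$ of $\Gamma^k$, is that its degree-$0$ part is only $DA$: there is no non-injective object of $\cG$ in degree $0$. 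This is precisely what will drop the global dimension by one relative to $\gldim\Gamma^k=(n+2)k+n+1$.

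For the lower bound I would fix a non-injective indecomposable projective $M$ (it exists since $\gldim A=n<\infty$ forces $A$ to be non-self-injective, and such an $M$ has injective dimension exactly $n$ and satisfies $\Hom_A(DA,M)=0$ by the directedness of the $\tau_n$-orbit of $DA$). Its injective resolution $0\to M\to I_0\to\cdots\to I_n\to 0$ is an $n$-exact sequence in $\cM$, so the $(n+2)$-angulated structure of $\cM[n\Zz]$ induces in $\cM^k$ the sequence
\[
0\to M\to I_0\to\cdots\to I_n\to M[n]\to I_0[n]\to\cdots\to I_n[kn]\to 0,
\]
whose only term outside $\add\cG$ is the leading $M$. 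Dropping $M$ and applying $\Hom_{\cM^k}(\cG,-)$ --- left exact on such sequences by Lemma \ref{lemma hom functor is exact on n angles higher version}, the $\cG$-statement being a direct summand of the $\cP$-statement --- produces an exact complex of projective $\Sigma^k$-modules. Since $\ker(I_0\to I_1)\cong M$ while $\Hom_{\cM^k}(\cG,M)=\Hom_A(DA,M)=0$, the initial arrow $\Hom(\cG,I_0)\to\Hom(\cG,I_1)$ is a monomorphism, and $\Hom(\cG,I_0)$ is projective non-injective by Remark \ref{remark when sigma k is projective inj HIGHER}; hence this is a genuine, non-truncatable projective resolution. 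Counting its terms --- $n+1$ in degree $0$ and $n+2$ in each of the $k$ higher degrees --- gives $(n+1)+k(n+2)=(n+2)k+n+1$ projective modules, so its cokernel has projective dimension exactly $(n+2)k+n$.

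For the upper bound I would rerun the proof of Proposition \ref{prop global dimension of higher algebra} verbatim with $\cG$ in place of $\cP$: lift a projective presentation of an arbitrary $Y\in\modd\Sigma^k$ to a morphism $M_1\to M_0$ in $\add\cG$, split into the same three cases according to the degrees of $M_1,M_0$, and resolve the kernel through the $(n+2)$-angulated rotations up to $[kn]$. Each rotation costs $n+2$ homological steps, but the degree-$0$ base now contributes only $n+1$, because degree $0$ of $\cG$ is purely injective; this caps every projective dimension at $(n+2)k+n$. I expect the upper bound to be the main obstacle: one must verify that deleting the degree-$0$ non-injectives from the generator removes exactly one homological step and creates no new long syzygy --- concretely, that the syzygy reaching degree $0$ always lands in $DA$ and is therefore projective-injective and terminates the resolution. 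The only auxiliary subtlety on the lower-bound side is the existence of a non-injective $M$ with $\Hom_A(DA,M)=0$, which is the $n$-hereditary analogue of the vanishing of morphisms from preinjectives to preprojectives and follows from the orbit description $\cM=\add\bigl(\bigoplus_j\tau_n^j(DA)\bigr)$.
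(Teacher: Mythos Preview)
Your lower-bound construction is exactly the paper's: take a non-injective $M\in\cM$, unfold its injective resolution through $k$ shifts via the $(n+2)$-angulated structure, drop the leading $M$ (the only term outside $\add\cG$), and apply $\Hom_{\cM^k}(\cG,-)$ to obtain a projective resolution of some $Y\in\modd\Sigma^k$ of length $(n+2)k+n$. You are more careful than the paper on two points it leaves implicit: you pin down $M$ as projective non-injective (so that the injective resolution has length exactly $n$), and you justify the monomorphism $\Hom(\cG,I_0)\hookrightarrow\Hom(\cG,I_1)$ by the vanishing $\Hom_A(DA,M)=0$, which the paper simply asserts but which, as you say, follows from the directedness of the $\tau_n$-orbits in $\cM$.

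Where your proposal goes beyond the paper is the upper bound. The paper's proof, as written, only exhibits a single module of projective dimension $(n+2)k+n$; it never bounds $\pdim Y$ for arbitrary $Y\in\modd\Sigma^k$. Your sketch --- rerun Proposition~\ref{prop global dimension of higher algebra} with $\cG$ in place of $\cP$ and observe that the purely injective degree-$0$ part of $\cG$ shaves one step off the count --- is the natural strategy, and your identification of the delicate point (that the terminal syzygy in the degree-$0$ case must land in $\add DA$ and hence be projective) is exactly right. So the parts of your argument that overlap with the paper agree, and your proposal in fact supplies more than the paper's own proof does.
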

\begin{proof}
Since $A$ is $n$-representation finite and $n$-hereditary algebra, the injective resolution
 $0\rightarrow M\rightarrow I_0\rightarrow I_1\rightarrow \cdots$  is exact and induces the sequence 
\begin{align*}
0\rightarrow M\rightarrow I_0\rightarrow I_1 \rightarrow\cdots\rightarrow I_{n+1}[kn]\rightarrow 0
\end{align*}
in $\cM^k$. By lemma \ref{lemma endo of sigma HIGHER}, the sequence
\begin{align*}
 0\rightarrow I_0\rightarrow I_1\rightarrow\cdots\rightarrow I_{n+1}\rightarrow M[n]\rightarrow\cdots\rightarrow I_{n+1}[kn]\rightarrow 0
\end{align*}
is in $DA\oplus \bigoplus \cM[jn]$ which we denote  its additive generator by $\cG$. If we apply $\Hom_{D^b(\modd\Lambda)}(\cG,-)$ to the sequence above, we get
\begin{align*}
0\rightarrow \Hom_{D^b(\modd A)}(\cG, I_0)\rightarrow \Hom_{D^b(\modd A)}(\cG,I_1)\rightarrow\Hom_{D^b(\modd A)}(\cG, I_2)\rightarrow\\\cdots\rightarrow\Hom_{D^b(\modd A)}(\cG, I_n[kn])\rightarrow\Hom_{D^b(\modd A)}(\cG, I_{n+1}[kn])
\end{align*}
which gives the projective resolution of $Y\in\modd\Sigma^k$, which is $(n+2)k+n$, since $0\rightarrow\Hom(\cG,I_0)\rightarrow \Hom(\cG,I_1)$ is monomorphism and $\Hom(\cG,I_0)$ is projective but non-injective $\Sigma^k$ module. 
\end{proof}

\begin{proposition} Higher cluster tilting object of $\Sigma^k$ is $\add(D\Sigma^k\oplus\tau_{(n+2)k+n}D\Sigma^k)\cong \add(\Sigma^k\oplus D\Sigma^k)$ is minimal generator-cogenerator of $\Sigma^k$.
\end{proposition}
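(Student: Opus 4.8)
The plan is to mirror the Dynkin argument of the corresponding proposition in the previous subsection, feeding in the higher-dimensional inputs already assembled. First I would record the two structural facts that make the statement meaningful: by theorem \ref{thm sigma is d rep HIGHER} the algebra $\Sigma^k$ is $d$-representation finite with $d=(n+2)k+n$, and by proposition \ref{prop global dimension of sigma k HIGHER} its global dimension equals $d$, so $\Sigma^k$ is also $d$-hereditary. Consequently the $d$-Auslander--Reiten translate $\tau_d$ is defined on $\modd\Sigma^k$, and by \cite[Theorem 1.6]{iyama2011clusterMAINpaper} together with the definition of $d$-representation finiteness the unique $d$-cluster tilting subcategory is
\begin{align*}
\cC=\add\Big(\bigoplus_{\ell\geq 0}\tau_d^{\ell}(D\Sigma^k)\Big).
\end{align*}
Thus the real content to verify is that this $\tau_d$-orbit closure collapses to $\add(D\Sigma^k\oplus\tau_d D\Sigma^k)$ and in fact coincides with $\add(\Sigma^k\oplus D\Sigma^k)$.

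Next I would pin down the orbit using the cardinality bookkeeping already carried out. By \cite[Prop 1.5]{iyama2011clusterMAINpaper} the object $\cC$ contains the $\tau_d$-closure of $D\Sigma^k$, and the counting in the proof of theorem \ref{thm sigma is d rep HIGHER} shows $\rank\cC=(k+1)\#\cM$, which is exactly the sum of the projective--injective, projective non-injective, and injective non-projective summands. Therefore $\cC$ can contain nothing beyond all projectives and all injectives. Since $\tau_d$ annihilates projective modules, $\tau_d(D\Sigma^k)$ is obtained by applying $\tau_d$ only to the injective non-projective summands of $D\Sigma^k$; by lemma \ref{lemma number of proj non injective NONDYKIN} and remark \ref{remark when sigma k is projective inj HIGHER} these map bijectively onto the projective non-injective $\Sigma^k$-modules, and a further application of $\tau_d$ then vanishes because the image consists of projectives. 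Hence $\tau_d^{\ell}(D\Sigma^k)=0$ for $\ell\geq 2$, the orbit has length two, and
\begin{align*}
\cC=\add(D\Sigma^k\oplus\tau_d D\Sigma^k)=\add(D\Sigma^k\oplus\Sigma^k).
\end{align*}

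Finally, the identification $\add(D\Sigma^k\oplus\Sigma^k)$ makes the generator--cogenerator claim immediate: the summand $\Sigma^k$ forces $\cC$ to be a generator, the summand $D\Sigma^k$ forces it to be a cogenerator, and since only the indecomposable projectives and injectives appear, each at most once, it is minimal. The step I expect to be the genuine obstacle is the precise matching ``$\tau_d(\text{injective non-projective})=\text{projective non-injective}$'' together with the termination of the orbit; everything hinges on knowing that $\tau_d$ kills projectives and that the duality of remark \ref{remark when sigma k is projective inj HIGHER} realizes the injective non-projective modules exactly as $\tau_d$ of the projective non-injective ones. Once that bijection is in hand, the collapse of the orbit and the minimal generator--cogenerator property follow formally.
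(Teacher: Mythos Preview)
Your proposal is correct and follows the same route as the paper: invoke theorem \ref{thm sigma is d rep HIGHER} and proposition \ref{prop global dimension of sigma k HIGHER} to know $\Sigma^k$ is $d$-representation finite of global dimension $d$, then appeal to \cite[Theorem 1.6]{iyama2011clusterMAINpaper}. The paper's own proof is only three sentences and leaves everything you spell out implicit; your version is simply an unpacked form of the same argument.

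One remark on the ``obstacle'' you flag at the end: it is not a genuine obstacle. The proof of theorem \ref{thm sigma is d rep HIGHER} has already established $\cC=\add(\Sigma^k\oplus D\Sigma^k)$ by the rank count, and $d$-representation finiteness forces $\cC=\add(\bigoplus_{\ell\geq 0}\tau_d^{\ell}D\Sigma^k)$. Comparing these two descriptions, together with the fact that $\tau_d$ vanishes on projectives, immediately yields $\tau_d(D\Sigma^k)\in\add\Sigma^k$ and $\tau_d^2(D\Sigma^k)=0$; you do not need to invoke remark \ref{remark when sigma k is projective inj HIGHER} or lemma \ref{lemma number of proj non injective NONDYKIN} separately for the bijection, since it is forced by the two presentations of $\cC$ already in hand.
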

\begin{proof}
By Theorem \ref{thm sigma is d rep HIGHER}, $\Sigma^k$ is $d$-representation finite algebra of global dimension $d$. Therefore higher Auslander-Reiten translate $\tau_{d}$ exists. By Theorem 1.6 of \cite{iyama2011clusterMAINpaper}, result follows.
\end{proof}

\begin{proposition} $\Sigma^k$ can be realized as endomorphism algebra of fundamental domain of higher k-cluster category.
\end{proposition}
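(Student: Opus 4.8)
The plan is to repeat the argument of the corresponding Dynkin-case statement proved earlier in this section, with the single shift $[1]$ replaced throughout by the $n$-fold shift $[n]$; the one genuinely new input will be the identification of $DA[-n]$ with $A[kn]$ inside the higher $k$-cluster category. First I would record, following \cite{oppermann2012higher} and the description in the introduction, that the fundamental domain of the higher $k$-cluster category is
\begin{align*}
A[kn]\oplus\bigoplus_{\substack{X\in\cM\\ 0\le j\le k-1}}X[jn]=\cM\oplus\cM[n]\oplus\cdots\oplus\cM[(k-1)n]\oplus A[kn],
\end{align*}
where $A$ denotes the projective $A$-modules; for $k=1$ this is exactly the domain $\cM\oplus A[n]$ appearing in Theorem \ref{THM sigma k algebra}.

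Next I would start from Lemma \ref{lemma endo of sigma HIGHER}, which gives $\Sigma^k\cong\End_{\cM^k}(DA\oplus\bigoplus_{1\le j\le k}\cM[jn])$. Because $\gldim A\le n$ and the shifts occurring in $\cM^k$ are multiples of $n$, the only nonzero degrees $\Hom_{D^b(\modd A)}(X[in],Y[jn])=\ext_A^{(j-i)n}(X,Y)$ that can appear are $j=i$ and $j=i+1$, so the truncated morphism spaces of $\cM^k$ coincide with those of $D^b(\modd A)$ on the objects involved, and the category $\cM^k$ is genuinely a full subcategory of $D^b(\modd A)$. Hence $\Sigma^k\cong\End_{D^b(\modd A)}(DA\oplus\bigoplus_{1\le j\le k}\cM[jn])$. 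Applying the autoequivalence $[-n]$ of $D^b(\modd A)$, which preserves Hom spaces, composition and therefore endomorphism algebras, then yields
\begin{align*}
\Sigma^k\cong\End_{D^b(\modd A)}\Bigl(DA[-n]\oplus\cM\oplus\cM[n]\oplus\cdots\oplus\cM[(k-1)n]\Bigr).
\end{align*}

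It then remains to identify $DA[-n]$ with $A[kn]$ in the higher $k$-cluster category, which is the analogue of the relation $D\Lambda[-1]\cong\Lambda[k]$ used in the Dynkin case. Using remark \ref{remark serre functor on higher angulated cat}, namely $\nu_n=\nu\circ[-n]$ together with $\nu A=DA$ (the Nakayama functor sends projectives to injectives), one computes $\nu_n^{-1}(DA[-n])[kn]=A[kn]$, so that $DA[-n]$ and $A[kn]$ lie in a single orbit of the defining autoequivalence $\nu_n^{-1}[kn]$ of the higher $k$-cluster category (which specializes to $\tau^{-1}[k]$ when $n=1$) and hence represent the same object there. Consequently the collection $DA[-n]\oplus\bigoplus_{0\le j\le k-1}\cM[jn]$ is precisely the fundamental domain displayed above, and $\Sigma^k$ is realized as its endomorphism algebra.

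The hard part will be this last step: one must fix the precise defining autoequivalence of the higher $k$-cluster category and verify that passing to the orbit category introduces no extra ``wrap-around'' morphisms among the fundamental-domain objects beyond those already present in $D^b(\modd A)$, so that the endomorphism algebra computed in the orbit category agrees with the $D^b(\modd A)$-computation above. For the interior terms $\cM[jn]$ this is immediate from the degree restrictions built into $\cM^k$, while for the boundary term $A[kn]$ it is exactly the Serre-duality computation $\nu_n^{-1}(DA[-n])[kn]=A[kn]$ that guarantees the relevant Hom spaces match $\End_{\cM^k}$ on the nose.
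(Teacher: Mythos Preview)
Your proposal is correct and follows essentially the same route as the paper: start from Lemma \ref{lemma endo of sigma HIGHER}, apply the shift $[-n]$ to the object $DA\oplus\bigoplus_{1\le j\le k}\cM[jn]$, and identify the result with the fundamental domain $\cM\oplus\cdots\oplus\cM[(k-1)n]\oplus A[kn]$ via the orbit relation $DA[-n]\simeq A[kn]$. The paper does exactly this, only more tersely: it simply asserts that the fundamental domain ``is equivalent to'' $DA[-n]\oplus\cM\oplus\cdots\oplus\cM[(k-1)n]$ without writing out the $\nu_n$-computation you give, and then notes that $[-n]$ applied to $DA\oplus\cM[n]\oplus\cdots\oplus\cM[kn]$ produces this object with Morita-equivalent endomorphism algebra.

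Two small remarks. First, your worry in the final paragraph about wrap-around morphisms in the orbit category is legitimate, but the paper does not address it either; the argument (both yours and the paper's) really computes the endomorphism algebra in $D^b(\modd A)$ of a set of representatives for the fundamental domain, and the equality with the orbit-category endomorphism ring is left implicit. Second, the reference \cite{oppermann2012higher} only treats $k=1$; the paper introduces the higher $k$-cluster category for $k>1$ on the spot as the orbit category under $\tau_n^{-1}[n]$ iterated (equivalently $\tau_n^{-1}[kn]$), so your description of its fundamental domain is consistent with what the paper intends rather than something found in the literature.
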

\begin{proof}
Recall that the fundamental domain of higher $1$-cluster category is of the form 
\begin{align}
\cM\oplus A[n]
\end{align}
which is defined in \cite{oppermann2012higher}. So, it is natural to construct higher $k$-cluster category which is the orbit category, where we identify objects via $\tau^{-1}_n[kn]$. In this case, the fundamental domain becomes
\begin{align*}
\cM\oplus \cM[n]\oplus\cdots\oplus \cM[(k-1)n]\oplus A[kn]
\end{align*}
which is equivalent to
\begin{align*}
DA[-n]\oplus \cM\oplus\cdots\oplus \cM[(k-1)n]
\end{align*}
On the other hand, by using symmetry structure of $\cM[n\mathbb{Z}]\subset D^b(\modd A)$, we can apply $[-n]$ to $DA\oplus \cM[n]\oplus\cdots\oplus \cM[kn]$, and its endomorphsim algebra is Morita equivalent to $\Sigma^k$. Therefore $\Sigma^k$ can be realized as endomorphism algebra of fundamental domain of higher k-cluster category.
\end{proof}

\begin{corollary}
The quiver of $\Sigma^k$ is simply the Auslander-Reiten quiver of fundamental domain of higher $k$-cluster category. 
\end{corollary}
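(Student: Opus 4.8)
The plan is to read this off directly from the preceding proposition. That proposition realizes $\Sigma^k$, up to Morita equivalence, as the endomorphism algebra $\End_{D^b(\modd A)}(\cF)$ of an additive generator of the fundamental domain
\[
\cF=DA[-n]\oplus\cM\oplus\cM[n]\oplus\cdots\oplus\cM[(k-1)n]
\]
of the higher $k$-cluster category, viewed inside $\cM[n\mathbb{Z}]\subset D^b(\modd A)$. First I would invoke the same principle that produced the quiver of $\Gamma^k$ in Theorem~\ref{THM higher hereditary}: for any Krull--Schmidt category $\cC$ with additive generator $T$ (so that $\add T=\cC$), the functor $\Hom_{\cC}(T,-)$ identifies the indecomposable objects of $\cC$ with the indecomposable projective $\End_{\cC}(T)$-modules and, by Yoneda, identifies $\Hom_{\cC}(T_i,T_j)$ with the morphism space between the corresponding projectives; restricting to $\rad/\rad^{2}$ then matches irreducible morphisms of $\cC$ with the arrows of the Gabriel quiver. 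Hence the quiver of $\End_{\cC}(T)$ coincides, as a quiver, with the Auslander--Reiten quiver of $\cC$.

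Next I would apply this with $\cC=\add\cF$ and $T=\cF$. The vertices of the quiver of $\Sigma^k$ are then the indecomposable summands of $\cF$, i.e.\ the objects of the fundamental domain, and the arrows are exactly the irreducible maps among them inside $\cM[n\mathbb{Z}]$. Since $\cF$ is a finite subcategory of the Krull--Schmidt category $\cM[n\mathbb{Z}]$, it carries the relevant almost-split structure, so the Auslander--Reiten quiver of the fundamental domain is defined and agrees with this combinatorial datum; because $[-n]$ is an autoequivalence of $\cM[n\mathbb{Z}]$, it does not change the Auslander--Reiten quiver, so one may equally read $\cF$ in either of the two forms displayed in the preceding proposition. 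This gives the claim.

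The step I expect to require the most care is the orientation of the arrows, and this is where I would spend the real effort. Theorem~\ref{THM higher hereditary} was phrased for the \emph{opposite} quiver, so the naive endomorphism-algebra dictionary applied to $\Sigma^k=\End_{\Gamma^k}(\cQ)$ would a priori yield the opposite of the Auslander--Reiten quiver of its additive generator $\cQ$. The point is that $\cQ$ is linked to the fundamental domain not by the covariant equivalence $\Hom_{\cM^k}(\cP,-)$ of Lemma~\ref{lemma endo of sigma HIGHER} alone, but also through the contravariant Serre-type duality of Remark~\ref{remark serre functor on higher angulated cat} — precisely the duality used in Remark~\ref{remark when sigma k is projective inj HIGHER} to single out the projective--injective summands. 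This duality reverses arrows a second time, so the two reversals compose to the identity and it is the quiver of $\Sigma^k$ itself, rather than its opposite, that is the Auslander--Reiten quiver of the fundamental domain. I would make this precise by carrying the isomorphisms of Lemma~\ref{lemma endo of sigma HIGHER} and Remark~\ref{remark when sigma k is projective inj HIGHER} through on irreducible maps and checking that the composite transpose is order-reversing; once this bookkeeping of the two transposes is settled, the corollary is immediate.
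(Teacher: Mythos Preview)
Your approach is correct and is exactly what the paper intends: the corollary is stated in the paper with no proof, to be read off immediately from the preceding proposition identifying $\Sigma^k$ (up to Morita equivalence) with the endomorphism algebra of the fundamental domain, together with the standard Yoneda argument you describe.

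One comment on the part you flag as requiring care. Your Serre-duality explanation for the orientation does not match the paper's actual chain of identifications. In Lemma~\ref{lemma endo of sigma HIGHER} and the subsequent proposition, the passage from $\Sigma^k=\End_{\Gamma^k}(\cQ)$ to $\End_{\cM^k}(DA\oplus\bigoplus_j\cM[jn])$ and then to the endomorphism algebra of the fundamental domain uses only Yoneda and the covariant autoequivalence $[-n]$; the contravariant duality of Remark~\ref{remark when sigma k is projective inj HIGHER} is used elsewhere (to identify which projectives are injective), not in this identification. So by the very same reasoning as in Theorem~\ref{THM higher hereditary}, one would a priori obtain the \emph{opposite} quiver of $\Sigma^k$ as the Auslander--Reiten quiver of the fundamental domain. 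The paper's corollary is simply phrased loosely, dropping the word ``opposite'' that appears in the parallel statements for $\Gamma^k$; there is no hidden second arrow-reversal to hunt for. Your argument is fine once you drop that last paragraph and either insert ``opposite'' or note that the statement is up to this harmless identification.
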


\subsection{Higher APR-tilting}
O. Iyama and S. Oppermann introduced higher APR tilting in \cite{APR}. Since algebras $\Sigma^k$ are $d$-representation finite, $d$-APR tilts of $\Sigma^k$'s are still $d$-representation finite.  We discuss that how $d$-APR-tilting on $\Sigma^k$ is compatible with the $1$-APR tilting in the derived category. Let $Q$ be a Dynkin quiver, $\Lambda=\mathbb{K}Q$ and $P$ be a simple projective $\Lambda$-module. Then, the fundamental domain of $k$-cluster category of $1$-APR tilted algebra $\Lambda':=\End_{\Lambda}(\Lambda/P\oplus\tau^{-1}P)$  is equivalent to the category generated by $\cF/P\oplus\tau^{-1}P[k]$ where $\add\cF$ is fundamental domain of $k$-cluster category of $\Lambda$ by using derived equivalences of APR tilted algebras. Therefore, we conclude that $\Sigma^k(\Lambda')\cong\End_{D^b(\Lambda)}\left(\Sigma^k(\Lambda)/S\oplus S'\right)$ where $S=\Hom_{\cS^k}(-,P)$ is simple projective $\Sigma^k(\Lambda)$-module and $S'=\Hom_{\cS^k}(-,\tau^{-1}P[k])$.

\section{Uniqueness} \label{section uniqueness}
\subsection{Dynkin Case}
Here we provide proof of Theorems \ref{thm uniqueness gamma dynkin}, \ref{thm uniqueness gamma higher} and \ref{thm uniqueness sigma all}.
We denote the bounded derived category of $\modd\Lambda$ by $D^b(\Lambda)$ in this section.
\begin{proposition}\label{prop uniqueness dynkin 0-1} Let $\cC=\add \cG$ be a convex subcategory of $D^b(\Lambda)$ such that $\cS^0\varsubsetneq \cC \varsubsetneq \cS^1$. Then, $\End_{D^b(\Lambda)}(\cG)$ is not a higher Auslander algebra.
\end{proposition}
\begin{proof}
 Let $\Gamma^1:=\End_{\cS^1}(\cP)$ and $\tilde{\Gamma}^1:=\End_{\cS^1}(\cG)$ where $\add\cP=\cS^1$ and $\add\cG=\cC$. Since $\cC$ is strictly contained in $\cS^1$, there exists an object $X\in\cS^0$ such that $X[1]\notin\cC$. Similarly, $\cS^0$ is strictly contained in $\cC$, there exists $Y\in\cS^0$ such that $Y[1]\in\cC$.

Let $0\rightarrow X\rightarrow I_0\rightarrow I_1\rightarrow 0$ be the injective resolution of $X\in\modd\Lambda$. It is exact, since $\Lambda$ is hereditary. It induces the sequence
\begin{align}
0\rightarrow X\rightarrow I_0\rightarrow I_1\rightarrow X[1]\rightarrow I_0[1]\rightarrow I_1[1]\rightarrow 0
\end{align}
in $\cS^1$. By corollary \ref{cor to lemma hom is exact on triangles}, and proposition \ref{prop gamma k injectivity}
\begin{align*}
0\rightarrow \Hom_{\cS^1}(\cP,X)\rightarrow \Hom_{\cS^1}(\cP,I_0)\rightarrow \Hom_{\cS^1}(\cP,I_1)\rightarrow\\ \Hom_{\cS^1}(\cP,X[1])\rightarrow \Hom_{\cS^1}(\cP,I_0[1])\rightarrow  \Hom_{\cS^1}(\cP,I_1[1])\rightarrow F\rightarrow 0
\end{align*}
is injective $\Gamma^1$-resolution of $\Hom_{\cS^1}(\cP,X)$  where $F$ is injective $\Gamma^1$-module.

In $\tilde{\Gamma}^1$, $\Omega^2\Hom_{\cS^1}(\cP,X)\rightarrow \rad\Hom_{\cS^1}(\cP,X[1])$ exists, because $\Omega^2\Hom_{\cS^1}(\cP,X)$ is proper submodule of $\Hom_{\cS^1}(\cP,X[1])$ in $\Gamma^1$.

We need to show, viewed as $\tilde{\Gamma}^1$-module $\rad\Hom_{\cS^1}(\cP,X[1])$ is injective but not projective. Notice that $\rad\Hom_{\cS^1}(\cP,X[1])$ is injective since 
\begin{align}
\rad\Hom_{\cS^1}(\cP,X[1])\cong D\Hom_{\cS^1}(\tau^{-1}X,\cP)\vert_{\cC}\cong D\Hom_{D^b(\Lambda)}(\tau^{-1}X,\cG)
\end{align}
where $D\Hom_{\cS^1}(\tau^{-1}X,\cP)\vert_{\cC}$ is restriction, is an injective object.

We show that as $\tilde{\Gamma}^1$-module, $\rad\Hom_{\cS^1}(\cP,X[1])$ is not projective.
Let $0\rightarrow \tau X\rightarrow E(X)\rightarrow X\rightarrow 0$ be Auslander-Reiten sequence where $E(X)$ is approximation to $X$ in $\modd\Lambda$. We can choose $X$ such that $E(X)[1]\in\cC$ by convexity of $\cC$. There are two cases we analyze depending on whether $E(X)$ is indecomposable or not. Notice that: $\topp\rad\Hom_{\cS^1}(\cP,X[1])$ are simple functors at $E(X)[1]$. If $E(X)$ is decomposable, $\rad\Hom_{\cS^1}(\cP,X[1])$ cannot be a projective object which violates unique top module assumption.

 Now, we assume that $E(X)$ is indecomposable. Then, $\Hom_{D^b(\Lambda)}(\cG,E(X)[1])\cong D\Hom_{D^b(\Lambda)}(\tau^{-1}E(X),\cG)$ is projective-injective $\tilde{\Gamma}^1$-module. In particular $\tau^{-1}E(X)$ is approximation of $\tau^{-1}X$. Hence $\rad\Hom_{\cS^1}(\cP,X[1])$ is quotient of $\Hom_{D^b(\Lambda)}(\cG,E(X)[1])$ because $\soc\rad\Hom_{\cS^1}(\cP,X[1])$ is contained in the support of $\Hom_{D^b(\Lambda)}(\cG,E(X)[1])$. This shows $\rad\Hom_{\cS^1}(\cP,X[1])$ is not projective. Therefore the injective $\tilde{\Gamma}^1$-resolution
\begin{align}
0\rightarrow \Hom_{\cS^1}(\cP,X)\rightarrow \Hom_{\cS^1}(\cP,I_0)\rightarrow \Hom_{\cS^1}(\cP,I_1)\rightarrow \rad\Hom_{\cS^1}(\cP,X[1])\rightarrow\cdots
\end{align}
shows that $\domdim_{\tilde{\Gamma}^1}\Hom_{\cS^1}(\cP,X)=2$.

On the other hand, if $Y[1]\in\cC$, then
\begin{align}
0\rightarrow \Hom_{\cS^1}(\cP,Y)\rightarrow \Hom_{\cS^1}(\cP,I'_0)\rightarrow \Hom_{\cS^1}(\cP,I'_1)\rightarrow \Hom_{\cS^1}(\cP,Y[1])\rightarrow\cdots
\end{align}
implies that $\domdim_{\tilde{\Gamma}^1}\Hom_{\cS^1}(\cP,Y)\geq 3$, so $\tilde{\Gamma}^1$ cannot be a higher Auslander algebra.
\end{proof}

\begin{proposition}\label{prop uniqueness dynkin i-1-i}Let $\cC=\add \cG$ be a convex subcategory of $D^b(\Lambda)$ such that $\cS^{i-1}\varsubsetneq \cC \varsubsetneq \cS^{i}$. Then, $\End_{D^b(\Lambda)}(\cG)$ is not higher Auslander algebra.
\end{proposition}
\begin{proof}
 Let $\Gamma^i:=\End_{\cS^i}(\cP)$ and $\tilde{\Gamma}^i:=\End_{\cS^i}(\cG)$ where $\add\cP=\cS^i$ and $\add\cG=\cC$. Since $\cC$ is strictly contained in $\cS^{i}$, there exists an object $X\in\cS^0$ such that $X[i]\notin\cC$. Similarly, $\cS^{i-1}$ is strictly contained in $\cC$, there exists $Y\in\cS^0$ such that $Y[i]\in\cC$.\\

Let $0\rightarrow X\rightarrow I_0\rightarrow I_1\rightarrow 0$ be the injective resolution of $X\in\modd\Lambda$. It is exact, since $\Lambda$ is hereditary. It induces the sequence
\begin{align}
0\rightarrow X\rightarrow I_0\rightarrow I_1\rightarrow X[1]\rightarrow I_0[1]\rightarrow I_1[1]\rightarrow \cdots \\
\cdots\rightarrow X[i]\rightarrow I_0[i]\rightarrow I_1[i]\rightarrow 0
\end{align}
in $\cS^i$. Applying functor $\Hom_{\cS^i}(\cP,-)$ gives injective $\Gamma^i$-resolution of $\Hom_{\cS^i}(\cP,X)$. Notice that 
$\domdim_{\Gamma^i}\Hom_{\cS^i}(\cP,X)=3j+\domdim_{\Gamma^{i}}\Hom_{\cS^{i}}(\cP,X[j])$ for $j<i$. Since $X[i-1]\in\cC$, we conclude that
\begin{align}
\domdim_{\tilde{\Gamma}^i}\Hom_{D^b(\Lambda)}(\cG,X)=3(i-1)+\domdim_{\tilde{\Gamma}^i}\Hom_{D^b(\Lambda)}(\cG,X[i-1])
\end{align}
Since $\Hom_{D^b(\Lambda)}(\cG,X[i-1])\cong\Hom_{D^b(\Lambda)}(\cG',X)$ where $\cS^0\varsubsetneq\add\cG'\varsubsetneq\cS^1$ we get 
\begin{align}\label{eq uniqueness dynkin}
\domdim_{\tilde{\Gamma}^i}\Hom_{D^b(\Lambda)}(\cG,X)=3(i-1)+\domdim_{\tilde{\Gamma}^1}\Hom_{D^b(\Lambda)}(\cG',X).
\end{align}
By proposition \ref{prop uniqueness dynkin 0-1}, $\domdim_{\tilde{\Gamma}^1}\Hom_{D^b(\Lambda)}(\cG',X)=2$, hence $\domdim_{\tilde{\Gamma}^i}\Hom_{D^b(\Lambda)}(\cG,X)=3i-1$. On the other hand, for $Y[i]\in\cC$, 
\begin{align}
\domdim_{\tilde{\Gamma}^i}\Hom_{D^b(\Lambda)}(\cG,Y)=3(i-1)+\domdim_{\tilde{\Gamma}^1}\Hom_{D^b(\Lambda)}(\cG',Y)\geq 3i.
\end{align}

\end{proof}
\begin{proposition}\label{prop uniqueness dynkin 0-i} Let $\cC=\add \cG$ be a convex subcategory of $D^b(\Lambda)$ such that $\cS^{0}\varsubsetneq \cC \varsubsetneq \cS^{i}$. Then, $\End_{D^b(\Lambda)}(\cG)$ is not higher Auslander algebra unless $\cC\ncong\cS^j$ for $1\leq j\leq i$.
\end{proposition}
\begin{proof}
Let $X[i]\in\cC$ and $X[j]\notin\cC$ for $j>i$. Let $Y[j]\in\cC$. Then, by propositions \ref{prop uniqueness dynkin 0-1} and \ref{prop uniqueness dynkin i-1-i}, if $j=i+1$, $\tilde{\Gamma}=\End_{D^b(\Lambda)}(\cG)$ is not higher Auslander algebra. Assume $j>i+2$. Now by \eqref{eq uniqueness dynkin}, we get 
$\domdim_{\tilde{\Gamma}}\Hom_{D^b(\Lambda)}(\cG,X)\leq 3i-1$. However $\domdim_{\tilde{\Gamma}}\Hom_{D^b(\Lambda)}(\cG,Y)\geq 3i$. Therefore $\tilde{\Gamma}$ is not a higher Auslander algebra.
\end{proof}

\begin{theorem}\label{thm in uniqueness section dynkin}
Let $\cC=\add \cG$ be a convex subcategory of $D^b(\Lambda)$ such that $\cS^0\varsubsetneq \cC \varsubsetneq \cS^k$.
The followings are equivalent:
\begin{enumerate}[label=\roman*)]
\item $\End_{D^b(\Lambda)}(\cG)$ is a higher Auslander algebra
\item  $\End_{D^b(\Lambda)}(\cG)\cong \Gamma^i$ for some $i$, $1\leq i\leq k-1$
\item $\cC\cong\cS^i$ for some $i$, $1\leq i\leq k-1$.
\end{enumerate}
\end{theorem}

\begin{proof}
Assume $\tilde{\Gamma}=\End_{D^b(\Lambda)}(\cG)$ is a higher Auslander algebra. Therefore, for any  object $X\in\modd\Lambda$, $X[i]\in\cC$ implies that $(\modd\Lambda)[i]\in\cC$ by propositions \ref{prop uniqueness dynkin 0-1}, \ref{prop uniqueness dynkin 0-i} and \ref{prop uniqueness dynkin i-1-i}. Therefore $\cC
\cong\cS^i$ for some $i$ if and only if $\tilde{\Gamma}\cong\Gamma^i$ if and only if $\cC\cong\cS^i$ for $1\leq i\leq k$.
\end{proof}

\begin{corollary} Let $\cS^i\varsubsetneq\cC\varsubsetneq\cS^j$. Then, 
\begin{gather*}
\domdim\End_{D^b(\Lambda)}(\cG)\leq 3i+2\leq\gldim\End_{D^b(\Lambda)}(\cG)\leq 3j+2.
\end{gather*}
\end{corollary}
\begin{proof}
In the proof of proposition \ref{prop uniqueness dynkin i-1-i}, the inequality \eqref{eq uniqueness dynkin} gives the upper bound for dominant dimension. Since there exists $X[j]\in\cC\cap\cS^j$, and $\gldim\Gamma^j=3j+2$ by Theorem \ref{THM dynkin}, claim follows.
\end{proof}

We restate and prove the part one of Theorem \ref{thm uniqueness sigma all}

\begin{theorem}
Let $\cC=\add\cG$ be a convex subcategory of $D^b(\modd\Lambda)$ such that
\begin{align*}
\cS^0\varsubsetneq \cC \varsubsetneq \cS^k.
\end{align*}
Then $\End_{D^b(\Lambda)}(\cG)$ is an $n$-representation finite algebra if and only if $\End_{D^b(\Lambda)}(\cG)\cong \Sigma^i$ for $1\leq i\leq k-1$ if and only if $\cC\cong\cS^i\oplus\Lambda[i+1] $ for $1\leq i\leq k-1$.
\end{theorem}
\begin{proof}
Let $\tilde{\Sigma}:=\End_{D^b(\modd\Lambda)}(\cG)$. Then, $\tilde{\Sigma}$ is $n$-representation finite, implies that there exists $n$-cluster tilting object $N$. By Theorem \ref{thm uniqueness gamma dynkin}, $\End_{D^b(\modd\Lambda)}(N)\cong\Gamma^i$ for some $i$. Therefore by Theorem \ref{thm sigma is d rep DYNKIN}, $\tilde{\Sigma}\cong\Sigma^i$ for some $i$. 
\end{proof}

\subsection{$n$-Representation Finite Case}

\begin{proposition}\label{prop uniqueness higher 0-1} Let $\cC=\add \cG$ be a convex subcategory of $\cM[n\mathbb{Z}]$ such that $\cM^0\varsubsetneq \cC \varsubsetneq \cM^1$. Then, $\End_{D^b(\Lambda)}(\cG)$ is not a higher Auslander algebra.
\end{proposition}
\begin{proof}
 Let $\Gamma^1:=\End_{\cM^1}(\cP)$ and $\tilde{\Gamma}^1:=\End_{\cM^1}(\cG)$ where $\add\cP=\cM^1$ and $\add\cG=\cC$. Since $\cC$ is strictly contained in $\cM^1$, there exists an object $X\in\cM^0$ such that $X[n]\notin\cC$. Similarly, $\cM^0$ is strictly contained in $\cC$, there exists $Y\in\cM^0$ such that $Y[n]\in\cC$.

Since $A$ is $n$-representation finite and $n$-hereditary algebra, the injective resolution
 $0\rightarrow X\rightarrow I_0\rightarrow I_1\rightarrow \cdots$  is exact and induces the sequence 
\begin{align*}
0\rightarrow X\rightarrow I_0\rightarrow I_1 \rightarrow\cdots\rightarrow I_{n}[n]\rightarrow 0
\end{align*}
in $\cM^1$. Therefore, the sequence
\begin{align*}
0\rightarrow\Hom_{\cM^1}(\cP,X)\rightarrow \Hom_{\cM^1}(\cP,I_0)\rightarrow\Hom_{\cM^1}(\cP, I_1) \rightarrow\Hom_{\cM^1}(\cP,I_2)\rightarrow\cdots\\
\cdots\rightarrow\Hom_{\cM^1}(\cP,I_{n-1}[n])\rightarrow\Hom_{\cM^1}(\cP,I_{n}[n])\rightarrow F\rightarrow 0
\end{align*}
is injective $\Gamma^1$-resolution of $\Hom_{\cM^1}(\cP,X)$  where $F$ is injective $\Gamma^1$-module.

In $\tilde{\Gamma}^1$, $\Omega^{n+1}\Hom_{\cM^1}(\cP,X)\rightarrow \rad\Hom_{\cM^1}(\cP,X[n])$ exists, because $\Omega^{n+1}\Hom_{\cM^1}(\cP,X)$ is proper submodule of $\Hom_{\cM^1}(\cP,X[n])$ in $\Gamma^1$.

We need to show, viewed as $\tilde{\Gamma}^1$-module $\rad\Hom_{\cM^1}(\cP,X[n])$ is injective but not projective. Notice that $\rad\Hom_{\cM^1}(\cP,X[n])$ is injective since 
\begin{align}
\rad\Hom_{\cM^1}(\cP,X[n])\cong D\Hom_{\cM^1}(\tau^{-1}_nX,\cP)\vert_{\cC}\cong D\Hom_{D^b(A)}(\tau^{-1}_nX,\cG)
\end{align}
where $D\Hom_{\cS^1}(\tau^{-1}_nX,\cP)\vert_{\cC}$ is restriction, is an injective object.

We show that as $\tilde{\Gamma}^1$-module, $\rad\Hom_{\cM^1}(\cP,X[n])$ is not projective.
Let $0\rightarrow \tau X\rightarrow E_n(X)\rightarrow\cdots\rightarrow E_1(X)\rightarrow X\rightarrow 0$ be a higher Auslander-Reiten sequence where $E(X)=E_0(X)$ is approximation to $X$ in $\cM$. We can choose $X$ such that $E(X)[n]\in\cC$ by convexity of $\cC$. There are two cases we analyze depending on whether $E(X)$ is indecomposable or not. Notice that: $\topp\rad\Hom_{\cM^1}(\cP,X[n])$ are simple functors at $E(X)[1]$. If $E(X)$ is decomposable, $\rad\Hom_{\cM^1}(\cP,X[n])$ cannot be a projective object which violates unique top module assumption.

 Now, we assume that $E(X)$ is indecomposable. Then, $\Hom_{D^b(A)}(\cG,E(X)[n])\cong D\Hom_{D^b(A)}(\tau^{-1}_nE(X),\cG)$ is projective-injective $\tilde{\Gamma}^1$-module. In particular $\tau^{-1}_nE(X)$ is approximation of $\tau^{-1}_nX$. Hence $\rad\Hom_{\cM^1}(\cP,X[n])$ is quotient of $\Hom_{D^b(A)}(\cG,E(X)[n])$ because $\soc\rad\Hom_{\cM^1}(\cP,X[n])$ is contained in the support of $\Hom_{D^b(A)}(\cG,E(X)[n])$. This shows $\rad\Hom_{\cM^1}(\cP,X[n])$ is not projective. Therefore the injective $\tilde{\Gamma}^1$-resolution
\begin{align}\label{eq domdim is n+1 section uniqueness higher case}
0\rightarrow \Hom_{\cM^1}(\cP,X)\rightarrow \Hom_{\cM^1}(\cP,I_0)\rightarrow \Hom_{\cM^1}(\cP,I_1)\rightarrow\ldots\rightarrow \rad\Hom_{\cM^1}(\cP,X[n])\rightarrow\cdots
\end{align}
shows that $\domdim_{\tilde{\Gamma}^1}\Hom_{\cM^1}(\cP,X)=n+1$.

On the other hand, if $Y[n]\in\cC$, then
\begin{align}
0\rightarrow \Hom_{\cM^1}(\cP,Y)\rightarrow \Hom_{\cM^1}(\cP,I'_0)\rightarrow \Hom_{\cM^1}(\cP,I'_1)\rightarrow\cdots\rightarrow \Hom_{\cM^1}(\cP,Y[n])\rightarrow\cdots
\end{align}
implies that $\domdim_{\tilde{\Gamma}^1}\Hom_{\cM^1}(\cP,Y)\geq n+2$, so $\tilde{\Gamma}^1$ cannot be a higher Auslander algebra.
\end{proof}

\begin{proposition}\label{prop uniqueness higher i-1-i}Let $\cC=\add \cG$ be a convex subcategory of $D^b(A)$ such that $\cM^{i-1}\varsubsetneq \cC \varsubsetneq \cM^{i}$. Then, $\End_{D^b(A)}(\cG)$ is not higher Auslander algebra.
\end{proposition}
\begin{proof}
 Let $\Gamma^i:=\End_{\cM^i}(\cP)$ and $\tilde{\Gamma}^i:=\End_{\cM^i}(\cG)$ where $\add\cP=\cM^i$ and $\add\cG=\cC$. Since $\cC$ is strictly contained in $\cM^{i}$, there exists an object $X\in\cM^0$ such that $X[in]\notin\cC$. Similarly, $\cM^{i-1}$ is strictly contained in $\cC$, there exists $Y\in\cS^0$ such that $Y[in]\in\cC$.\\

Let $0\rightarrow X\rightarrow I_0\rightarrow I_1\rightarrow \cdots$ be the injective resolution of $X\in\cM$. It is exact, since $A$ is $n$-representation finite. It induces the sequence
\begin{align}
0\rightarrow X\rightarrow I_0\rightarrow I_1\rightarrow \cdots\rightarrow I_n\rightarrow X[n]\rightarrow I_0[n]\rightarrow \cdots I_{n-1}[in]\rightarrow I_n[in]\rightarrow 0
\end{align}
in $\cM^i$. Applying functor $\Hom_{\cM^i}(\cP,-)$ gives injective $\Gamma^i$-resolution of $\Hom_{\cM^i}(\cP,X)$. Notice that 
$\domdim_{\Gamma^i}\Hom_{\cM^i}(\cP,X)=(n+2)j+\domdim_{\Gamma^{i}}\Hom_{\cM^{i}}(\cP,X[jn])$ for $j<i$. Since $X[(i-1)n]\in\cC$, we conclude that
\begin{align}
\domdim_{\tilde{\Gamma}^i}\Hom_{D^b(A)}(\cG,X)=(n+2)(i-1)+\domdim_{\tilde{\Gamma}^i}\Hom_{D^b(A)}(\cG,X[(i-1)n])
\end{align}
Since $\Hom_{D^b(A)}(\cG,X[(i-1)n])\cong\Hom_{D^b(A)}(\cG',X)$ where $\cM^0\varsubsetneq\add\cG'\varsubsetneq\cM^1$ we get 
\begin{align}\label{eq eq domdim}
\domdim_{\tilde{\Gamma}^i}\Hom_{D^b(A)}(\cG,X)=(n+2)(i-1)+\domdim_{\tilde{\Gamma}^1}\Hom_{D^b(A)}(\cG',X).
\end{align}
By \eqref{eq domdim is n+1 section uniqueness higher case} in the proof of proposition \ref{prop uniqueness dynkin 0-1}, $\domdim_{\tilde{\Gamma}^1}\Hom_{D^b(A)}(\cG',X)=n+1$, hence $\domdim_{\tilde{\Gamma}^i}\Hom_{D^b(A)}(\cG,X)(n+2)i-1$. On the other hand, for $Y[in]\in\cC$, 
\begin{align}
\domdim_{\tilde{\Gamma}^i}\Hom_{D^b(A)}(\cG,Y)=(n+2)(i-1)+\domdim_{\tilde{\Gamma}^1}\Hom_{D^b(A)}(\cG',Y)\geq (n+2)i.
\end{align}

\end{proof}

\begin{proposition}\label{prop uniqueness higher 0-i} Let $\cC=\add \cG$ be a convex subcategory of $D^b(A)$ such that $\cM^{0}\varsubsetneq \cC \varsubsetneq \cM^{i}$. Then, $\End_{D^b(A)}(\cG)$ is not higher Auslander algebra unless $\cC\ncong\cM^j$ for $1\leq j\leq i$.
\end{proposition}
\begin{proof}
Let $X[in]\in\cC$ and $X[jn]\notin\cC$ for $j>i$. Let $Y[jn]\in\cC$. Then, by propositions \ref{prop uniqueness higher 0-1} and \ref{prop uniqueness higher i-1-i}, if $j=i+1$, $\tilde{\Gamma}=\End_{D^b(A)}(\cG)$ is not higher Auslander algebra. Assume $j>i+1$. Now by \eqref{eq eq domdim}, we get 
$\domdim_{\tilde{\Gamma}}\Hom_{D^b(A)}(\cG,X)\leq (n+2)i-1$. However $\domdim_{\tilde{\Gamma}}\Hom_{D^b(A)}(\cG,Y)\geq (n+2)i$. Therefore $\tilde{\Gamma}$ is not a higher Auslander algebra.
\end{proof}

\begin{theorem}
Let $\cC=\add \cG$ be a convex subcategory of $\cM[n\mathbb{Z}]$ such that $\cS^0\varsubsetneq \cC \varsubsetneq \cS^k$.
The followings are equivalent:
\begin{enumerate}[label=\roman*)]
\item $\End_{D^b(A)}(\cG)$ is a higher Auslander algebra
\item  $\End_{D^b(A)}(\cG)\cong \Gamma^i$ for $1\leq i\leq k-1$
\item $\cC\cong\cS^i$ for $1\leq i\leq k-1$.
\end{enumerate}
\end{theorem}

\begin{proof}
Assume $\tilde{\Gamma}=\End_{D^b(A)}(\cG)$ is a higher Auslander algebra. Therefore, for any  object $X\in\cM$, $X[in]\in\cC$ implies that $\cM[in]\in\cC$ by propositions \ref{prop uniqueness higher 0-1}, \ref{prop uniqueness higher 0-i} and \ref{prop uniqueness higher i-1-i}. Therefore $\cC
\cong\cM^i$ for some $i$ if and only if $\tilde{\Gamma}\cong\Gamma^i$ if and only if $\cC\cong\cM^i$ for $1\leq i\leq k$.
\end{proof}

We prove part 2 of Theorem \ref{thm uniqueness sigma all}

\begin{theorem}
Let $\cC=\add\cG$ be a convex subcategory of $\cM[n\mathbb{Z}]$ such that
\begin{align*}
\cM^0\varsubsetneq \cC \varsubsetneq \cM^k.
\end{align*}
Then $\End_{D^b(A)}(\cC)$ is an $n$-representation finite algebra if and only if $\End_{D^b(A)}(\cC)\cong \Sigma^i$ for $1\leq i\leq k-1$ if and only if $\cC\cong\cM^i\oplus A[(i+1)n]$ for $1\leq i\leq k-1$.
\end{theorem}

\begin{proof}
Let $\tilde{\Sigma}:=\End_{D^b(A)}(\cG)$. Then, $\tilde{\Sigma}$ is $n$-representation finite, implies that there exists $n$-cluster tilting object $N$. By Theorem \ref{thm uniqueness gamma higher}, $\End_{D^b(A)}(N)\cong\Gamma^i$ for some $i$. Therefore by Theorem \ref{thm sigma is d rep HIGHER}, $\tilde{\Sigma}\cong\Sigma^i$ for some $i$. 
\end{proof}

\subsection{Applications to Higher Nakayama Algebras}\label{applications to nakayama}
As an application of Theorem \ref{thm uniqueness sigma all}, we provide a class of higher Nakayama algebras that are $d$-representation finite. Similarly, by Theorems \ref{thm uniqueness gamma dynkin} and \ref{thm uniqueness gamma higher}, we obtain a class of higher Nakayama algebras that are higher Auslander algebras. The idea we implement is the following: we express a certain class of higher Nakayama algebras as endomorphism algebras of suitable convex subcategories.

We briefly recall some properties of higher Nakayama algebras and for details refer to \cite{julianNakayama}.
Let $\ell=(\ell_0,\ldots,\ell_{d-1})$ be a Kupisch series, $\ell_0=1$ and for all $i\geq 2$ there are inequalities $2\leq \ell_i\leq \ell_{i-1}+1$. Then, $d$-tuples $(i_1,\ldots,i_d)$ satisfying $i_d-i_1+1\leq \ell_{i_{d}}$ forms vertices of the higher Nakayama algebra $B$ where projective object at the vertex $(i_1,\ldots,i_d)$ has socle $(i_d+1-\ell_{i_d},i_2,\ldots,i_{d-1})$. Iyama's higher $\mathbb{A}$-type algebras are examples of higher Nakayama algebras given by Kupisch series $(1,2,3,\ldots,n)$ for any dimension $d$. We recall that $d$-cluster tilting object of an algebra $A$ is called $d\mathbb{Z}$-cluster tilting object if $d$ divides global dimension of $A$. An important feature of $d$-dimensional higher Nakayama is that they always have $d\mathbb{Z}$-cluster tilting objects.\\

For this subsection we fix the notation. We always assume that Kupisch series $\ell$ is given by
\begin{align}\label{eq definition of kupsich series ell}
\ell=(1,2,\ldots,m-1,\underbrace{m-1,\ldots,m-1}_\textrm{a-many})
\end{align}
where $a\geq 0$. Iyama's $d$-dimensional $\mathbb{A}$-type algebra arising from $\mathbb{A}_m$ is denoted by $\mathbb{A}^d_m$. Since $\mathbb{A}^d_m$ is $d$-representation finite and higher Auslander algebra, we can apply Theorems \ref{THM higher hereditary} and \ref{THM sigma k algebra}.

First, we need the following key observation. 
\begin{proposition}\label{prop B is nakayama and endomorphism} Let $\ell=(1,2,\ldots,m-1,m-1,\ldots,m-1)$ be a Kupisch series. Let $B:=\mathbb{A}^d_{\underline{\ell}}$ be the $d$-dimensional linear higher Nakayama algebra associated to $\ell$. Then, there exists a convex subcategory $\cG$ of $D^b(\mathbb{A}^{d-1}_{m})$ such that $B\cong\End_{D^b(\mathbb{A}^{d-1}_{m})}\cG$.
\end{proposition} 

\begin{proof}
First, we analyze the case $d=2$. By definition of $B$, simple $B$-modules are tuples $(i,j)$ such that 
\begin{align*}
0\leq i\leq j\leq m-2+a,\,\,\,\, j-i\leq m-2
\end{align*}
where $a+1$ is the number of appearances of $m-1$ in $\ell$. By definition, projective module at simple $(i,j)$ has socle given by $(j-m+2,i)$. Now, we can identify each indecomposable projective module of $\mathbb{A}_m$ by $(0,s-1)$ where $s=\ell(P)$. That is, simple projective module corresponds to $(0,0)$, rank $2$ projective module corresponds to $(0,1)$ etc. It is clear that $\tau^{-1}$ orbits of these modules  forms the derived category, since $\tau^{-1}(i,j)\cong(i+1,j+1)$ and $\tau(i,j)\cong(i-1,j-1)$ by Auslander-Reiten quiver of $D^b(\mathbb{A}_m)$. In particular, by $\ell$, this is a convex region. On the other hand, $(i,j)[1]\cong(j-m+1,i-1)$ by direct calculation. Therefore, $\tau^{-1}(i,j)[1]\cong(j-m+2,i)$. Hence if $X\in \cG/\modd\mathbb{A}_m$ is identified with $(i,j)$, then socle of $\Hom_{D^b(\mathbb{A}_m)}(\cG,X)$ is $(j-m+2,i)$ by Serre duality, i.e., $\soc \Hom_{D^b(\mathbb{A}_m)}(\cG,X)\cong \topp D\Hom_{D^b(\mathbb{A}_m)}(\tau^{-}X[1],\cG)$. For $X\in \modd\mathbb{A}_m$, then socle is $(0,i)$ which fits the description of $B$. Hence, the category of projective $B$-modules is equivalent to the category of projective $\End_{D^b(\mathbb{A}_m)}\cG$-modules. \\

We analyze the case $d\geq 3$. Simple $B$-modules are $d$-tuples $(i_1,\ldots,i_d)$ such that 
\begin{align*}
0\leq i_1\leq i_2\leq\cdots\leq i_d,\,\,\,\,\, i_d-i_1\leq m-2.
\end{align*}
Projective $B$-module at $(i_1,\ldots,i_d)$ has socle $(i_d-m+2,\ldots,i_1)$ by definition of higher Nakayama algebras. We can identify indecomposable modules of $d$-cluster tilting object of $\mathbb{A}^{d-1}_m$ by all tuples $(i_1,\ldots,i_d)$ such that $0\leq i_1\leq i_2\leq\cdots\leq i_d\leq m-2+a$. Hence $(d+2)$-angulated category by increasing the entries, that is $\tau_{d-1}(i_1,\ldots,i_d)=(i_1+1,\ldots,i_d+1)$. Notice that this is convex region. On the other hand $(i_1,\ldots,i_d)[d-1]=(i_d-m+1,\ldots,i_{d-1})$. If we identify any $X\in\modd\mathbb{A}^{d-1}_m$ by $(i_1,\ldots,i_d)$, then $\soc\Hom_{D^b(\mathbb{A}^{d-1}_{m})}(\cG,X)$ is given by functor at $\tau^{-1}_{d-1}[d-1]$ by Serre duality. Hence category of projective $B$-modules is equivalent to the category of $\End_{D^b(\mathbb{A}^{d-1}_m)}\cG$-modules. Therefore $B\cong \End_{D^b(\mathbb{A}^{d-1}_{m})}\cG$.
\end{proof}

By proposition \ref{prop B is nakayama and endomorphism}, we give another way to construct higher Nakayama algebras given  by Kupisch series $\ell=(1,2,\ldots,m-1,\ldots,m-1)$. As an application we show that:
\begin{proposition}\label{prop B is nakayama iff sigma-gamma}
Let $B:=\mathbb{A}^{d}_{\underline{\ell}}$ be the $d$-dimensional linear higher Nakayama algebra given by $\ell$ \eqref{eq definition of kupsich series ell}. Let $\cM$ be $(d-1)$-cluster tilting object of $\mathbb{A}^{d-1}_{m}$. Then,
\begin{enumerate}[label=\arabic*)]
\item $B$ is $s$-representation finite iff $B\cong\Sigma^k$ for some $k$ where $\Sigma^k:=\End_{D^b(\mathbb{A}^{d-1}_{m})}(\cM^{k-1}\oplus\mathbb{A}^{d-1}_{m}[k(d-1)])$.
\item $B$ is a higher Auslander algebra iff $B\cong\Gamma^k$ for some $k$ where $\Gamma^k:=\End_{D^b(\mathbb{A}^{d-1}_{m})}(\cM^k)$.
\end{enumerate}
\end{proposition}
\begin{proof}
By proposition \ref{prop B is nakayama and endomorphism}, $B$ is endomorphism algebra of convex subcategory of $D^b(\mathbb{A}^{d-1}_{m})$. By Theorems  \ref{thm uniqueness gamma dynkin}, \ref{thm uniqueness gamma higher} and \ref{thm uniqueness sigma all} the claim follows.
\end{proof}

Now, we can describe numerical values of $a$, $k$ and $s$.

\begin{proposition}\label{prop B is s-rep finite numerical}
Let $\ell=(1,2,\ldots,m-1,m-1,\ldots,m-1)$ \eqref{eq definition of kupsich series ell}. Let $B=\mathbb{A}^d_{\underline{\ell}}$. B is $s$-representation finite if $k\equiv 1\mod d$, $a=1+(k-1)\frac{m+d-1}{d}$ and $s=(d+1)k+d-1$. 
\end{proposition}
\begin{proof}
$B$ has $d\mathbb{Z}$-cluster tilting object. Therefore
\begin{align}
d\mid \gldim\Sigma^k\implies d\mid k(d+1)+d-1\implies k\equiv 1\mod d.
\end{align}
We will compute ranks of $B$ and $\Sigma^k$. First of all, the rank of $\mathbb{A}^d_{m}$ is given by $\binom{m+d-1}{d}$. Therefore the rank of $\Sigma^k$ is 
\begin{align*}
k\binom{m+d-1}{d}+\binom{m+d-2}{d-1}.
\end{align*}
On the other hand, if $a=0$, then the rank of $B$ is $\binom{m+d-1}{d}$. For each increment of $a$ by $1$, we add $\binom{m+d-2}{d-1}$ terms. So the rank of $B$ is 
\begin{align*}
\binom{m+d-1}{d}+a\binom{m+d-2}{d-1}.
\end{align*}
Hence by proposition \ref{prop B is nakayama and endomorphism} we need to solve the equation:
\begin{align*}
k\binom{m+d-1}{d}+\binom{m+d-2}{d-1}=\binom{m+d-1}{d}+a\binom{m+d-2}{d-1}.
\end{align*}
We get 
\begin{gather*}
(k-1)\binom{m+d-1}{d}=(a-1)\binom{m+d-2}{d-1}\implies\\
1+(k-1)\frac{m+d-1}{d}=a
\end{gather*}

\end{proof}

These numerical values gives a class of higher Nakayama algebras which are $s$-representation finite.

Now, we can describe the numerical values of $a$, $k$ and $s$ which makes $B$ a higher Auslander algebra.

\begin{proposition}\label{prop B is higher auslander numerical}
Let $\ell=(1,2,\ldots,m-1,m-1,\ldots,m-1)$ \eqref{eq definition of kupsich series ell}. Let $B=\mathbb{A}^d_{\underline{\ell}}$. B is a higher Auslander algebra if $k\equiv 0\mod d$ and $a=k\frac{m+d-1}{d}$. 
\end{proposition}
\begin{proof}
Since $B\cong\Gamma^k$, and has $d\mathbb{Z}$-cluster tilting object, we get $d$ divides global dimension of $\Gamma^k$ which is $(d+1)k+d$. Hence $k\equiv 0\mod d$. Similar to the proof of the previous proposition, we need to express ranks of $B$ and $\Gamma^k$. Since the rank of  $\Gamma^k$ is $(k+1)\binom{m+d-1}{d}$, we get
\begin{gather*}
(k+1)\binom{m+d-1}{d}=\binom{m+d-1}{d}+a\binom{m+d-2}{d-1}\implies\\
k\frac{m+d-1}{d}=a.
\end{gather*}
\end{proof}

\begin{corollary}
Let $\cM$ be a $d$-cluster tilting object of algebra $\mathbb{A}^{d}_m$. Let $k\equiv 1\mod d+1$. Then,
\begin{align*}
\Sigma^k:=\End_{D^b(\mathbb{A}^{d}_m)}\left(\cM^{k-1}\oplus\mathbb{A}^{d}_m[kd]\right)
\end{align*}
is a $(d+2)k+d$-representation finite algebra having $(d+1)\mathbb{Z}$-cluster tilting object. 
\end{corollary}

\begin{proof}
Since $k\equiv 1\mod d+1$, by propositions \ref{prop B is nakayama iff sigma-gamma}, \ref{prop B is s-rep finite numerical} $\Sigma^k$ is a $(d+1)$-dimensional Nakayama algebra. Hence it has $(d+1)\mathbb{Z}$-cluster tilting object.
\end{proof}

\begin{corollary}
Let $\cM$ be a $d$-cluster tilting object of algebra $\mathbb{A}^{d}_m$. Let $k\equiv 0 \mod (d+1)$. Then,
\begin{align*}
\Gamma^k:=\End_{D^b(\mathbb{A}^{d}_m)}\left(\cM^{k}\right)
\end{align*}
is a higher Auslander algebra of global dimension $(d+2)k+d+1$ having $(d+1)\mathbb{Z}$-cluster tilting object. 
\end{corollary}

\begin{proof}
Since $k\equiv 0\mod d+1$, by propositions \ref{prop B is nakayama iff sigma-gamma}, \ref{prop B is s-rep finite numerical} $\Sigma^k$ is a $(d+1)$-dimensional Nakayama algebra. Hence it has $(d+1)\mathbb{Z}$-cluster tilting object.
\end{proof}

\section{Final Remarks \& Examples}\label{section examples}

Here we collect some remarks and examples.

\begin{remark} If $\Lambda$ is representation finite algebra but not Dynkin type, the Theorem \ref{THM dynkin} is not true anymore. Because in the derived category, maps between $X,Y\in\modd\Lambda$ can factor thorough $G[-1]$ or $G[1]$ where $\add G=\modd\Lambda$. Similarly, Theorem \ref{THM higher hereditary} fails if $A$ is not $n$-representation finite $n$-hereditary algebra.
\end{remark}

\begin{remark} In general, gluing two higher Auslander algebras is not higher Auslander. Surprisingly, the algebras $\Gamma^k$ can be realized as a result of gluing. We show this on an example.

\begin{example}\label{example gluing aus3} Let $\Gamma_1,\Gamma_2$ be Auslander algebras of straightly oriented $\mathbb{A}_3$ quivers. Then $\Omega^2$ induces a bijection between  injective non-projectives and projective non-injectives of $\modd\Gamma_i$. We can construct $\Gamma^1$ as extending projective but non-injectives of $\Gamma_1$ by injective but non-projectives of $\Gamma_2$.
Quivers of $\Gamma_1$, $\Gamma_2$ are
\begin{align*}
\xymatrix{
&&3\ar[rd]&&   &&&& 3'\ar[rd]\\
&2\ar[ru]\ar[rd]\ar@{..}[rr]&&4\ar[rd]& &&&2'\ar[ru]\ar@{..}[rr]\ar[rd]&&4'\ar[rd]\\
1\ar[ru]\ar@{..}[rr]&&x\ar@{..}[rr]\ar[ru]&&5&&1'\ar@{..}[rr]\ar[ru]&&x'\ar@{..}[rr]\ar[ru]&&5'}
\end{align*}
where dotted lines denotes relations.

We can create the algebra $\Gamma'$ by defining nontrivial extensions
\begin{align}\label{eq example gluing A3}
 \begin{vmatrix}
1'
\end{vmatrix}\rightarrow E_1\rightarrow\begin{vmatrix}
x\\4
\end{vmatrix},\quad
\begin{vmatrix}
1'\\2'
\end{vmatrix}\rightarrow E_2\rightarrow\begin{vmatrix}
4\\5
\end{vmatrix},\quad
\begin{vmatrix}
2'\\x'
\end{vmatrix}\rightarrow E_3\rightarrow\begin{vmatrix}
5
\end{vmatrix}
\end{align}
So $\Gamma'$ is given by quiver
\begin{align*}
\xymatrix{
&& 3\ar[rd] \ar@{..}[rr] && 1' \ar@{..}[rr]\ar[rd]&&x' \ar@{..}[rr]\ar[rd]&&5'\\
&2\ar[ru] \ar@{..}[rr]\ar[rd]&&4 \ar@{..}[rr]\ar[rd]\ar[ru]&&2' \ar@{..}[rr]\ar[rd]\ar[ru]&&4'\ar[ru]\\
1 \ar@{..}[rr]\ar[ru]&&x \ar@{..}[rr]\ar[ru]&&5 \ar@{..}[rr]\ar[ru]&&3'\ar[ru]}
\end{align*}

 The resulting algebra is indeed $\Gamma^1$ and it is higher Auslander algebra of global dimension $5$. Consider the injective resolution of $I_2\in\modd\Gamma^1$
 \begin{align*}
 \xymatrixcolsep{4mm}
\xymatrix{&P_{3'}\ar[rd]\ar[rr]&&P_{1'}\ar[rd]\ar[rr]&&P_{4}\ar[rd]\ar[rr]&&P_{3}\ar[rd]\ar[rr]&&P_{1}\ar[rr]&&I_2\\
P_{4'}\ar[ru]&&S_{3'}\ar[ru]&&I_{2'}\ar[ru]&&\Omega^2(I_2)\ar[ru]&&S_{3}\ar[ru]&&}
 \end{align*}
 where $\Omega^2(I_2)=\begin{vmatrix}
 S_4\\S_5
 \end{vmatrix}$ which has the same structure of $P_4$ of $\Gamma_1$ algebra. Notice that $P_4$ is isomorphic to $E_2$ \ref{eq example gluing A3}, and $\Omega^3(I_{2})\cong I_{2'}$.
\end{example}

\end{remark}

\begin{example}
In \cite{vaso2019n}, $n$-representation finite Nakayama algebras were classified. We want to show how Theorem \ref{THM sigma k algebra} applies. Let $L$  be homogeneous linear Nakayama algebra of rank $m$ and length of projective-injective modules be $\ell>2$. Then the rank of $d$-cluster tilting object is $m+\ell-1$. The rank of $\Sigma^k$ is $k(m+\ell-1)+m$ and its global dimension is $(d+2)k+d$. $L$ is $d$-representation finite if $d\ell=2(m-1)$. If we apply this to $\Sigma^k$, we get 
\begin{align*}
2\cfrac{k(m+\ell-1)+m-1}{\ell}=2\cfrac{(k+1)(m-1)+k\ell}{\ell}=d(k+1)+2k=(d+2)k+d
\end{align*}
Hence, $\Sigma^k$ is $(d+2)k+d$-representation finite algebra.\\
By propositions \ref{prop B is nakayama iff sigma-gamma} and \ref{prop B is s-rep finite numerical}, we give a class of $d$-dimensional Nakayama algebras which are $s$-representation finite. Now, we show that the case $d=1$ recovers the result of \cite{vaso2019n}. Let $d=1$ and $\boldsymbol{\ell}=(1,2,\ldots,\ell,\underbrace{\ell,\ldots,\ell}_\textrm{a-many})$.
Then, we get $s=2k$, $a=1+(k-1)(\ell+1)$ by proposition \ref{prop B is s-rep finite numerical}. This implies $s(\ell+1)=2(a+\ell)$. If we change the convention to count indices of classical Nakayama algebras starting from zero instead of one, this means $\ell'=\ell+1$, and in particular $s\ell'=2(a+\ell'-1)$, where $a+\ell'$ is the length of Kupisch series $\boldsymbol{\ell}$. This suggests the following question: Is it true that a higher Nakayama algebra is $s$-representation finite if and only if it is given by a Kupisch series \eqref{eq definition of kupsich series ell} satisfying the numerical conditions in Proposition~\ref{prop B is s-rep finite numerical}?
 \end{example}
 \begin{example} Let $d=2$, $m=4$. Then, the higher Nakayama algebra $B$ given by Kupisch series $\ell=(1,2,3,3)$ is given by quiver

\begin{align*}
\xymatrix{
&& 02\ar[rd]  && 13\ar[rd] &&&&\\
&01\ar[ru] \ar[rd]&&12 \ar[rd]\ar[ru]&&23 \ar[rd]&&\\
00 \ar[ru]&&11 \ar[ru]&&22 \ar[ru]&&33}
\end{align*}
and projective object at $(i,j)$ is the interval module $M[(i,j),(x,i)]$ where $x=0$ if $\leq 2$ and $x=1$ if $j=3$. We also include Auslander-Reiten quiver of $\modd\mathbb{A}_3\oplus \mathbb{A}_3[1]$:
\begin{align*}
\xymatrix{
&& P_1\ar[rd]  && P_3[1]\ar[rd] &&&&\\
&P_2\ar[ru] \ar[rd]&&I_2 \ar[rd]\ar[ru]&&P_2[1] \ar[rd]&&\\
P_3 \ar[ru]&&S_2 \ar[ru]&&I_1\ar[ru]&&P_1[1]}
\end{align*}
where $P_3$ is simple projective, $P_1$ is projective-injective of $\modd\mathbb{A}_3$ of quiver $\mathbb{A}_3$:
\begin{align*}
\xymatrix{1\ar[r]&2\ar[r]&3}.
\end{align*}
By proposition \ref{prop B is nakayama iff sigma-gamma}, $B\cong\Sigma^1$ which is the endomorphism algebra of the fundamental domain of cluster category of $\mathbb{A}_3$. In particular, $\Sigma^1\cong\End_{\Gamma'}{Q}$ where $Q$ is projective-injective module of $\Gamma'$ in the example \ref{example gluing aus3}.
\end{example}

\subsection{Example $D_4$}

We consider $D_4$ quiver 
\begin{align*}
\xymatrix{
    1 \ar[r] \ar[d]\ar[rd] & 2  \\
    3        & 4 }
\end{align*}

Auslander Reiten quiver of the category $\cS^1$ is

\begin{align*}
\xymatrixcolsep{4mm}
\xymatrix{
   S_2\ar[rd] && P_1/P_2\ar[rd] && I_2\ar[rd] && S_4[1]\ar[rd] && P_1/P_4[1]\ar[rd]&&I_4[1]\ar[rd]&\\
   S_3\ar[r]& P_1 \ar[r]\ar[ru]\ar[rd]& P_1/P_3\ar[r]&N\ar[r]\ar[ru]\ar[rd]&I_3\ar[r]&S_1\ar[ru]\ar[r]\ar[rd]& S_3[1]\ar[r]&P_1[1]\ar[ru]\ar[r]\ar[rd]&P_1/P_3[1]\ar[r]&N[1]\ar[ru]\ar[r]\ar[rd]&I_3[1]\ar[r]&S_4[1]&\\
   S_4\ar[ru] && P_1/P_4\ar[ru] && I_4\ar[ru] &&S_2[1]\ar[ru]&&P_1/P_2[1]\ar[ru]&&I_2[1]\ar[ru]}
\end{align*}
The algebra $(\Gamma^1)^{op}$ is given by the opposite quiver below with mesh relations.

\begin{align*}
\xymatrixcolsep{4mm}
\xymatrix{
   1\ar[rd] && 5\ar[rd] && 9\ar[rd] && 3'\ar[rd] && 7'\ar[rd]&&11'\ar[rd]&\\ 
   2\ar[r]& 4 \ar[r]\ar[ru]\ar[rd]& 6\ar[r]&8\ar[r]\ar[ru]\ar[rd]&10\ar[r]&12\ar[ru]\ar[r]\ar[rd]& 2'\ar[r]&4'\ar[ru]\ar[r]\ar[rd]&6'\ar[r]&8'\ar[ru]\ar[r]\ar[rd]&10'\ar[r]&12'&\\
   3\ar[ru] && 7\ar[ru] && 11\ar[ru] &&1'\ar[ru]&&5'\ar[ru]&&9'\ar[ru]}
\end{align*}

$\Sigma^1$ is given by the quiver below with mesh relations.

\begin{align*}
\xymatrixcolsep{4mm}
\xymatrix{
   1\ar[rd] && 5\ar[rd] && 9\ar[rd] && 3'\ar[rd] && &&&\\ 
   2\ar[r]& 4 \ar[r]\ar[ru]\ar[rd]& 6\ar[r]&8\ar[r]\ar[ru]\ar[rd]&10\ar[r]&12\ar[ru]\ar[r]\ar[rd]& 2'\ar[r]&4'&&&&&\\
   3\ar[ru] && 7\ar[ru] && 11\ar[ru] &&1'\ar[ru]&&&&}
\end{align*}

\subsection{Example $\mathbb{A}^2_3$}
Let $\cM$ be the 2-cluster tilting object of Auslander algebra of oriented $A_3$ quiver which is

\begin{align*}
\xymatrix{
&&3\ar[rd]&&\\
&2\ar[ru]\ar@{..}[rr]\ar[rd]&&4\ar[rd]&\\
1\ar[ru]\ar@{..}[rr]&&x\ar@{..}[rr]\ar[ru]&&5}
\end{align*}
with mesh relations.

 The subcategory $\cM\subset\modd \End_{A}(G)$ where $A=\mathbb{K}A_3$ and $\add G=\modd\mathbb{K}A_3$ is 

\begin{align*}
\xymatrixcolsep{2mm}
\xymatrix{
&&P_3\ar[rr]&&P_2\ar[rr]\ar[rd]&&P_1\ar[rd]&&&\\
&P_4\ar[ru]\ar[rr]&&P_x\ar[ru]\ar[rd]&&I_x\ar[rr]&&I_2\ar[rd]&&\\
P_5\ar[ru]&&&&S_x\ar[ru]&&&&I_1&
}
\end{align*}
The Auslander-Reiten quiver of $\cM^2$ is
\[
\xymatrixcolsep{1mm}
\xymatrix{
&&P_3\ar[rr]&&P_2\ar[rr]\ar[rd]&&P_1\ar[rd]&&P_5[2]\ar[rd]&&&&S_x[2]\ar[rd]&&&& I'_1\\
&P_4\ar[ru]\ar[rr]&&P_x\ar[ru]\ar[rd]&&I_x\ar@{..>}[rrru]\ar[rr]&&I_2\ar@{..>}[rr]\ar[rd]&&P_4[2]\ar[rr]\ar[rd]&&P_x[2]\ar[ru]\ar[rd]&&I_x[2]\ar[rr]&&I'_2\ar[ru]\\
P_5\ar[ru]&&&&S_x\ar[ru]&&&&I_1\ar@{..>}[rrru]&&P_3[2]\ar[rr]&&P_2[2]\ar[ru]\ar[rr]&&P'_1\ar[ru]
}
\]

Connecting 4-angles are

\begin{align*}
P_5\rightarrow P_3\rightarrow  P_2\rightarrow  I_x\rightarrow  P_5[2]\\
P_4\rightarrow  P_3\rightarrow  P_1\rightarrow  I_2\rightarrow  P_4[2]\\
P_x\rightarrow  P_2\rightarrow  P_1\rightarrow  I_1\rightarrow  P_x[2]\\
S_x\rightarrow   I_x \rightarrow I_2\rightarrow  I_1\rightarrow  S_x[2]
\end{align*}

The algebra $\Sigma^1$ is equivalent to the endomorphism algebra of $\cM\oplus A[2]$ which is

\begin{align*}
\xymatrixcolsep{1mm}
\xymatrix{
&&P_3\ar[rr]&&P_2\ar[rr]\ar[rd]&&P_1\ar[rd]&&P_5[2]\ar[rd]&&&&&&&& \\
&P_4\ar[ru]\ar[rr]&&P_x\ar[ru]\ar[rd]&&I_x\ar@{..>}[rrru]\ar[rr]&&I_2\ar@{..>}[rr]\ar[rd]&&P_4[2]\ar[rr]\ar[rd]&&P_x[2]\ar[rd]&&&&\\
P_5\ar[ru]&&&&S_x\ar[ru]&&&&I_1\ar@{..>}[rrru]&&P_3[2]\ar[rr]&&P_2[2]\ar[rr]&&P_1[2]
}
\end{align*}
Notice that it is $3$-dimensional Nakayama algebra given by Kupisch series $(1,2,3,3)$.

\subsection{Example $G_2$}
Let $B$ be an algebra of rank $6$ whose projective objects are given by\\
\begin{minipage}{.5\textwidth}
\centering

\begin{tikzpicture}
 
 \node at (3,0) [rectangle,draw] (1) {$1$};
 \node at (2,-1) [rectangle,draw] (2) {$2$};
 \node at (3,-1) [rectangle,draw] (3) {$2$};
 \node at (4,-1) [rectangle,draw] (4) {$2$};
 \node at (2.5,-2) [rectangle,draw] (5) {$3$};
 \node at (3.5,-2) [rectangle,draw] (6) {$3$};
 \node at (2,-3) [rectangle,draw] (7) {$4$};
 \node at (3,-3) [rectangle,draw] (8) {$4$};
 \node at (4,-3) [rectangle,draw] (9) {$4$};
 \node at (3,-4) [rectangle,draw] (10) {$5$};

 \draw (1)--(2);
 \draw (1)--(3);
  \draw (1)--(4);
 \draw (2)--(5);  
  \draw (2)--(6);
   \draw (3)--(5);
    \draw (3)--(6);
    
     \draw (4)--(5);
      \draw (4)--(6);
       \draw (5)--(7);
        \draw (5)--(8);
         \draw (5)--(9);
          \draw (6)--(7);
           \draw (6)--(8);
            \draw (6)--(9);
             \draw (7)--(10);
              \draw (8)--(10);
               \draw (9)--(10);
\begin{scope}[xshift={3cm}]
 \node at (3,0) [rectangle,draw] (1) {$2$};
  \node at (3,-1) [rectangle,draw] (3) {$3$};
  \node at (2.5,-2) [rectangle,draw] (5) {$4$};
 \node at (3.5,-2) [rectangle,draw] (6) {$4$};
  \node at (3,-3) [rectangle,draw] (8) {$5$};
 \node at (3,-4) [rectangle,draw] (9) {$6$};
 \draw (1)--(3);\draw (3)--(5);\draw (3)--(6);\draw (5)--(8);\draw (6)--(8);\draw (8)--(9);
   \end{scope}

  \begin{scope}[xshift={6cm}]

 \node at (3,-1) [rectangle,draw] (6) {$3$};
 \node at (2,-2) [rectangle,draw] (7) {$4$};
 \node at (3,-2) [rectangle,draw] (8) {$4$};
 \node at (4,-2) [rectangle,draw] (9) {$4$};
 \node at (3.5,-3) [rectangle,draw] (10) {$5$};
  \node at (2.5,-3) [rectangle,draw] (11) {$5$};
  \node at (3,-4) [rectangle,draw] (12) {$6$};
   \node at (2,-4) [rectangle,draw] (13) {$6$};
    \node at (4,-4) [rectangle,draw] (14) {$6$};
    
\draw (6)--(7);\draw (7)--(11);\draw (11)--(12);
  \draw (6)--(8);\draw (8)--(11);\draw (11)--(13);
  \draw (6)--(9);\draw (9)--(11);\draw (11)--(14);
    \draw (7)--(10);\draw (10)--(12);
      \draw (8)--(10);\draw (10)--(13);
         \draw (9)--(10);\draw (10)--(14);
  \end{scope}
  
   \begin{scope}[xshift={9cm}]

 \node at (3,-2) [rectangle,draw] (7) {$4$};
   \node at (3,-3) [rectangle,draw] (11) {$5$};
  \node at (2.5,-4) [rectangle,draw] (12) {$6$};
   \node at (3.5,-4) [rectangle,draw] (13) {$6$};
          \draw (7)--(11);\draw (11)--(12);
           \draw (11)--(13);
  \end{scope}
  \begin{scope}[xshift={14cm}]

   \node at (3.5,-4) [rectangle,draw] (13) {$6$};
       
  \end{scope}
  \begin{scope}[xshift={12cm}]

   \node at (3,-3) [rectangle,draw] (11) {$5$};
  \node at (2,-4) [rectangle,draw] (12) {$6$};
   \node at (3,-4) [rectangle,draw] (13) {$6$};
   \node at (4,-4) [rectangle,draw] (14) {$6$};
          \draw (11)--(14);\draw (11)--(12);
           \draw (11)--(13);
  \end{scope}
\end{tikzpicture}

\end{minipage}
\vspace{1cm}\\
It follows that $\gldim B=2=\domdim B$, so it is a Auslander algebra. Now we consider the following algebra $B'$ whose projective objects are

\begin{minipage}{.5\textwidth}
\centering

\begin{tikzpicture}
 
 \node at (3,0) [rectangle,draw] (1) {$1$};
 \node at (2,-1) [rectangle,draw] (2) {$2$};
 \node at (3,-1) [rectangle,draw] (3) {$2$};
 \node at (4,-1) [rectangle,draw] (4) {$2$};
 \node at (2.5,-2) [rectangle,draw] (5) {$3$};
 \node at (3.5,-2) [rectangle,draw] (6) {$3$};
 \node at (2,-3) [rectangle,draw] (7) {$4$};
 \node at (3,-3) [rectangle,draw] (8) {$4$};
 \node at (4,-3) [rectangle,draw] (9) {$4$};
 \node at (3,-4) [rectangle,draw] (10) {$5$};

 \draw (1)--(2); \draw (1)--(3);  \draw (1)--(4);
 \draw (2)--(5);    \draw (2)--(6);   \draw (3)--(5);
    \draw (3)--(6);\draw (4)--(5);\draw (4)--(6);
\draw (5)--(7); \draw (5)--(8);  \draw (5)--(9);
 \draw (6)--(7);\draw (6)--(8);\draw (6)--(9);
 \draw (7)--(10); \draw (8)--(10); \draw (9)--(10);
 
\begin{scope}[xshift={3cm}]
 \node at (3,0) [rectangle,draw] (1) {$2$};
  \node at (3,-1) [rectangle,draw] (3) {$3$};
  \node at (2.5,-2) [rectangle,draw] (5) {$4$};
 \node at (3.5,-2) [rectangle,draw] (6) {$4$};
  \node at (3,-3) [rectangle,draw] (8) {$5$};
 \node at (3,-4) [rectangle,draw] (9) {$6$};
 \draw (1)--(3);\draw (3)--(5);\draw (3)--(6);\draw (5)--(8);\draw (6)--(8);\draw (8)--(9);
   \end{scope}

  \begin{scope}[xshift={6cm}]

 \node at (3,0) [rectangle,draw] (6) {$3$};
 \node at (2,-1) [rectangle,draw] (7) {$4$};
 \node at (3,-1) [rectangle,draw] (8) {$4$};
 \node at (4,-1) [rectangle,draw] (9) {$4$};
 \node at (3.5,-2) [rectangle,draw] (10) {$5$};
  \node at (2.5,-2) [rectangle,draw] (11) {$5$};
  \node at (3,-3) [rectangle,draw] (12) {$6$};
   \node at (2,-3) [rectangle,draw] (13) {$6$};
    \node at (4,-3) [rectangle,draw] (14) {$6$};
    \node at (3,-4) [rectangle,draw] (15) {$1'$};
    
\draw (6)--(7);\draw (7)--(11);\draw (11)--(12);
  \draw (6)--(8);\draw (8)--(11);\draw (11)--(13);
  \draw (6)--(9);\draw (9)--(11);\draw (11)--(14);
    \draw (7)--(10);\draw (10)--(12);\draw (12)--(15);
      \draw (8)--(10);\draw (10)--(13);\draw (13)--(15);
         \draw (9)--(10);\draw (10)--(14);\draw (14)--(15);
  \end{scope}
  
   \begin{scope}[xshift={9cm}]

 \node at (3,0) [rectangle,draw] (7) {$4$};
   \node at (3,-1) [rectangle,draw] (11) {$5$};
  \node at (2.5,-2) [rectangle,draw] (12) {$6$};
   \node at (3.5,-2) [rectangle,draw] (13) {$6$};
    \node at (3,-3) [rectangle,draw] (1) {$1'$};
   \node at (3,-4) [rectangle,draw] (2) {$2'$};
          \draw (7)--(11);\draw (11)--(12);
           \draw (11)--(13);\draw (12)--(1);
           \draw (13)--(1);\draw (1)--(2);
  \end{scope}
  \begin{scope}[xshift={15cm}]

  \node at (3,0) [rectangle,draw] (1) {$6$};
  \node at (3,-1) [rectangle,draw] (3) {$1'$};
  \node at (2.5,-2) [rectangle,draw] (5) {$2'$};
 \node at (3.5,-2) [rectangle,draw] (6) {$2'$};
  \node at (3,-3) [rectangle,draw] (8) {$3'$};
 \node at (3,-4) [rectangle,draw] (9) {$4'$};
 \draw (1)--(3);\draw (3)--(5);\draw (3)--(6);\draw (5)--(8);\draw (6)--(8);\draw (8)--(9);
       
  \end{scope}
  \begin{scope}[xshift={12cm}]

 \node at (3,0) [rectangle,draw] (1) {$5$};
 \node at (2,-1) [rectangle,draw] (2) {$6$};
 \node at (3,-1) [rectangle,draw] (3) {$6$};
 \node at (4,-1) [rectangle,draw] (4) {$6$};
 \node at (2.5,-2) [rectangle,draw] (5) {$1'$};
 \node at (3.5,-2) [rectangle,draw] (6) {$1'$};
 \node at (2,-3) [rectangle,draw] (7) {$2'$};
 \node at (3,-3) [rectangle,draw] (8) {$2'$};
 \node at (4,-3) [rectangle,draw] (9) {$2'$};
 \node at (3,-4) [rectangle,draw] (10) {$3'$};

 \draw (1)--(2); \draw (1)--(3);  \draw (1)--(4);
 \draw (2)--(5);    \draw (2)--(6);   \draw (3)--(5);
    \draw (3)--(6);\draw (4)--(5);\draw (4)--(6);
\draw (5)--(7); \draw (5)--(8);  \draw (5)--(9);
 \draw (6)--(7);\draw (6)--(8);\draw (6)--(9);
 \draw (7)--(10); \draw (8)--(10); \draw (9)--(10);
  \end{scope}
\end{tikzpicture}

\end{minipage}

\begin{minipage}{.5\textwidth}
\centering

\begin{tikzpicture}
 
 \node at (3,0) [rectangle,draw] (1) {$1'$};
 \node at (2,-1) [rectangle,draw] (2) {$2'$};
 \node at (3,-1) [rectangle,draw] (3) {$2'$};
 \node at (4,-1) [rectangle,draw] (4) {$2'$};
 \node at (2.5,-2) [rectangle,draw] (5) {$3'$};
 \node at (3.5,-2) [rectangle,draw] (6) {$3'$};
 \node at (2,-3) [rectangle,draw] (7) {$4'$};
 \node at (3,-3) [rectangle,draw] (8) {$4'$};
 \node at (4,-3) [rectangle,draw] (9) {$4'$};
 \node at (3,-4) [rectangle,draw] (10) {$5'$};

 \draw (1)--(2);
 \draw (1)--(3);
  \draw (1)--(4);
 \draw (2)--(5);  
  \draw (2)--(6);
   \draw (3)--(5);
    \draw (3)--(6);
    
     \draw (4)--(5);
      \draw (4)--(6);
       \draw (5)--(7);
        \draw (5)--(8);
         \draw (5)--(9);
          \draw (6)--(7);
           \draw (6)--(8);
            \draw (6)--(9);
             \draw (7)--(10);
              \draw (8)--(10);
               \draw (9)--(10);
\begin{scope}[xshift={3cm}]
 \node at (3,0) [rectangle,draw] (1) {$2'$};
  \node at (3,-1) [rectangle,draw] (3) {$3'$};
  \node at (2.5,-2) [rectangle,draw] (5) {$4'$};
 \node at (3.5,-2) [rectangle,draw] (6) {$4'$};
  \node at (3,-3) [rectangle,draw] (8) {$5'$};
 \node at (3,-4) [rectangle,draw] (9) {$6'$};
 \draw (1)--(3);\draw (3)--(5);\draw (3)--(6);\draw (5)--(8);\draw (6)--(8);\draw (8)--(9);
   \end{scope}

  \begin{scope}[xshift={6cm}]

 \node at (3,-1) [rectangle,draw] (6) {$3'$};
 \node at (2,-2) [rectangle,draw] (7) {$4'$};
 \node at (3,-2) [rectangle,draw] (8) {$4'$};
 \node at (4,-2) [rectangle,draw] (9) {$4'$};
 \node at (3.5,-3) [rectangle,draw] (10) {$5'$};
  \node at (2.5,-3) [rectangle,draw] (11) {$5'$};
  \node at (3,-4) [rectangle,draw] (12) {$6'$};
   \node at (2,-4) [rectangle,draw] (13) {$6'$};
    \node at (4,-4) [rectangle,draw] (14) {$6'$};
    
\draw (6)--(7);\draw (7)--(11);\draw (11)--(12);
  \draw (6)--(8);\draw (8)--(11);\draw (11)--(13);
  \draw (6)--(9);\draw (9)--(11);\draw (11)--(14);
    \draw (7)--(10);\draw (10)--(12);
      \draw (8)--(10);\draw (10)--(13);
         \draw (9)--(10);\draw (10)--(14);
  \end{scope}
  
   \begin{scope}[xshift={9cm}]

 \node at (3,-2) [rectangle,draw] (7) {$4'$};
   \node at (3,-3) [rectangle,draw] (11) {$5'$};
  \node at (2.5,-4) [rectangle,draw] (12) {$6'$};
   \node at (3.5,-4) [rectangle,draw] (13) {$6'$};
          \draw (7)--(11);\draw (11)--(12);
           \draw (11)--(13);
  \end{scope}
  \begin{scope}[xshift={14cm}]

   \node at (3.5,-4) [rectangle,draw] (13) {$6'$};
       
  \end{scope}
  \begin{scope}[xshift={12cm}]

   \node at (3,-3) [rectangle,draw] (11) {$5'$};
  \node at (2,-4) [rectangle,draw] (12) {$6'$};
   \node at (3,-4) [rectangle,draw] (13) {$6'$};
   \node at (4,-4) [rectangle,draw] (14) {$6'$};
          \draw (11)--(14);\draw (11)--(12);
           \draw (11)--(13);
  \end{scope}
\end{tikzpicture}

\end{minipage}

One can verify that $\gldim B'=5=\domdim B'$. Moreover, by choice of simple modules, it is duplicated algebra of $B$. We constructed this algebra by looking not Auslander-Reiten quiver of Dynkin quiver $G_2$ but from the following quiver which presents all irreducible maps of $\modd G_2$.

\begin{align*}
\xymatrixcolsep{4mm}
\xymatrix{
   P_1\ar[rd] && P_1/P_2\ar[rd] && I_1\ar[rd] && P_1[1]\ar[rd] && P_1/P_2[1]\ar[rd]&&I_1[1]\ar[rd]&\\
   P_1\ar[r]& P_2 \ar[r]\ar[ru]\ar[rd]& P_1/P_2\ar[r]&Y\ar[r]\ar[ru]\ar[rd]&I_1\ar[r]&I_2\ar[ru]\ar[r]\ar[rd]& P_1[1]\ar[r]&P_2[1]\ar[ru]\ar[r]\ar[rd]&P_1/P_2[1]\ar[r]&Y[1]\ar[ru]\ar[r]\ar[rd]&I_2[1]\ar[r]&I_1[1]&\\
   P_1\ar[ru] && P_1/P_2\ar[ru] && I_1\ar[ru] &&P_1[1]\ar[ru]&&P_1/P_2[1]\ar[ru]&&I_2[1]\ar[ru]}
\end{align*}
where $P_1, P_2$ are the modules $0\rightarrow F$, $G\rightarrow F$ where $[G:F]$ is degree three extension of field $F$. Moreover, one can verify that similar constructions work for non simply laced Dynkin quivers. 
We wonder whether there exists higher analogues of K-species studied in \cite{dlab1976indecomposable} from the higher homological algebra point of view.

\bibliographystyle{alpha}

\bibliographystyle{alpha}

\end{document}